\theoremstyle{plain}
\newtheorem{theorem}[subsection]{Theorem}
\newtheorem{proposition}[subsection]{Proposition}
\newtheorem{corollary}[subsection]{Corollary}	
\newtheorem{lemma}[subsection]{Lemma}
\theoremstyle{definition}
\newtheorem{definition}[subsection]{Definition}
\newtheorem{convention}[subsection]{Convention}
\theoremstyle{remark}
\newtheorem{remark}[subsection]{Remark}
\newtheorem{example}[subsection]{Example}
\newtheorem{examples}[subsection]{Examples}
\numberwithin{equation}{section}
\newcommand{\defn}{\textbf}
\newcommand{\noproof}{\hfil\qed}
\newcommand{\join}{\vee}
\newcommand{\meet}{\wedge}
\newcommand{\ot}{\leftarrow}
\newcommand{\comp}{\circ}
\newcommand\ttop{\scaleobj{0.6}{\top}}
\newcommand\pperp{\scaleobj{0.6}{\perp}}
\DeclareMathOperator{\Eq}{Eq}
\DeclareMathOperator{\I}{I}
\DeclareMathOperator{\U}{U}
\DeclareMathOperator{\Ii}{I_{1}}
\DeclareMathOperator{\Triv}{Triv}
\newcommand{\B}{\mathcal{B}}
\newcommand{\C}{\mathcal{C}}
\newcommand{\E}{\mathcal{E}}
\newcommand{\X}{\mathcal{X}}
\newcommand{\Y}{\mathcal{Y}}
\newcommand{\Arr}{\mathsf{Arr}}
\newcommand{\Ext}{\mathsf{Ext}}
\newcommand{\Cov}{\mathsf{Cov}}
\newcommand{\TCov}{\mathsf{TCov}}
\newcommand{\Set}{\mathsf{Set}}
\newcommand{\N}{\mathbb{N}}
\def\pullback{
 \ar@{-}[]+R+<6pt,-1pt>;[]+RD+<6pt,-6pt>%
 \ar@{-}[]+D+<1pt,-6pt>;[]+RD+<6pt,-6pt>}
 \def\pullbackleft{%
 \ar@{-}[]+L+<-6pt,-1pt>;[]+LD+<-6pt,-6pt>%
 \ar@{-}[]+D+<-1pt,-6pt>;[]+LD+<-6pt,-6pt>}
\def\rotpullbackdots{%
 \ar@{.}[]+D+<6pt,-6pt>;[]+D+<0pt,-12pt>;%
 \ar@{.}[]+D+<0pt,-12pt>;[]+D+<-6pt,-6pt>}
 \def\pullbackdots{%
 \ar@{.}[]+R+<6pt,-1pt>;[]+RD+<6pt,-6pt>%
 \ar@{.}[]+D+<1pt,-6pt>;[]+RD+<6pt,-6pt>}
\def\skewpullback{%
 \ar@{-}[]+R+<6pt,-1pt>;[]+RD+<-3pt,-6pt>%
 \ar@{-}[]+D+<-9pt,-6pt>;[]+RD+<-3pt,-6pt>}
\def\altskewpullback{%
 \ar@{-}[]+R+<6pt,-1pt>;[]+RD+<-1pt,-6pt>%
 \ar@{-}[]+D+<-6pt,-6pt>;[]+RD+<-1pt,-6pt>}
\def\skewpullbackdots{%
 \ar@{.}[]+R+<6pt,-1pt>;[]+RD+<-3pt,-6pt>%
 \ar@{.}[]+D+<-9pt,-6pt>;[]+RD+<-3pt,-6pt>}
\def\opskewpullbackdots{%
 \ar@{.}[]+D+<-1pt,-6pt>;[]+LD+<-6pt,-13pt>%
 \ar@{.}[]+LD+<-6pt,-6pt>;[]+LD+<-6pt,-13pt>}
 \def\opskewpullback{%
 \ar@{-}[]+D+<-1pt,-6pt>;[]+LD+<-6pt,-13pt>%
 \ar@{-}[]+LD+<-6pt,-6pt>;[]+LD+<-6pt,-13pt>}
 \def\splitpullback{%
\ar@{-}[]+R+<6pt,-1pt>;[]+RD+<6pt,-6pt>%
\ar@{-}[]+D+<.51ex,-6pt>;[]+RD+<6pt,-6pt>}
\def\opsplitpullback{%
\ar@{-}[]+R+<6pt,-.51ex>;[]+RD+<6pt,-6pt>%
\ar@{-}[]+D+<1pt,-6pt>;[]+RD+<6pt,-6pt>}
\begin{document}

\title[A symmetric approach to higher coverings]{A symmetric approach to higher coverings\\ in categorical Galois theory}

\author{Fara\ Renaud}
\author{Tim\ Van~der Linden}

\email{tim.vanderlinden@uclouvain.be}
\email{fara.renaud@uclouvain.be}

\address{Institut de Recherche en Math\'ematique et Physique, Universit\'e catholique de Louvain, che\-min du cyclotron~2 bte~L7.01.02, B--1348 Louvain-la-Neuve, Belgium}

\thanks{The first author was supported by the Fonds de la Recherche Scientifique--FNRS under grant number MIS-F.4536.19 and the \emph{Formation à la Recherche dans l'Industrie et dans l'Agriculture (FRIA)}. The second author is a Senior Research Associate of the Fonds de la Recherche Scientifique--FNRS}

\begin{abstract}
	In the context of a tower of (strongly Birkhoff) Galois structures in the sense of categorical Galois theory, we show that the concept of a higher covering admits a characterisation which is at the same time \emph{absolute} (with respect to the base level in the tower), rather than inductively defined relative to extensions of a lower order; and \emph{symmetric}, rather than depending on a perspective in terms of arrows pointing in a certain chosen direction. This result applies to the Galois theory of quandles, for instance, where it helps us characterising the higher coverings in purely algebraic terms.
\end{abstract}

\subjclass[2020]{18E50; 18G10; 08C05; 18A20; 18G50; 57K12}
\keywords{Covering theory of racks and quandles; categorical Galois theory; higher (trivial) covering; discrete fibration; tower of Galois structures}

\maketitle

\tableofcontents

\section{Introduction}

The aim of this article is two-fold: first of all, we wish to expound a symmetric approach to the concept of a \emph{higher covering} from categorical Galois theory~\cite{Janelidze:Double,EGVdL,Ever2010,Mathieu}. Our approach, however, depends on the development of a setting which is sufficiently strong, while remaining general enough to be applicable to our concrete examples. Developing this setting is the second main purpose of this text.

It is well known that, despite its inductive definition, the notion of a \emph{higher extension} is entirely symmetric: as shown in~\cite{EGoeVdL}, there is no dependence on the choice of direction. For higher \emph{coverings}, however, which in their inductive definition are viewed as arrows between arrows with a specific direction, the symmetry is much more difficult to explain. There is the combinatorial proof of~\cite{Tomasthesis,Ever2010} which is valid in semi-abelian~\cite{Janelidze-Marki-Tholen} categories, but which is hard to extend to weaker settings, essentially because it depends on an ad-hoc commutator theory which does not generalise as such to the field of application we have in mind. Indeed, our original motivation is to describe higher coverings of quandles and racks---a setting which is very far from being semi-abelian, since the categories involved are neither pointed nor Mal'tsev.

Instead, we here present a new conceptual proof of the symmetry of higher coverings, based on an idea in~\cite{RVdL2}, which is not only simpler, but also valid in a much wider context---including the categories of racks and quandles, where it helps us characterising the coverings in algebraic terms~\cite{Ren2021c}. In that setting, it is noticeable that unlike coverings, higher \emph{normal} and \emph{trivial} coverings are not symmetrical, and these facts are clearly reflected in certain steps of the general proof not being valid for those classes of extensions---the key difference being that coverings are reflected by pullbacks along extensions, whereas normal coverings and trivial coverings are not.

This first goal of the article forms the subject of Section~\ref{Section Discrete Fibrations Double}--\ref{Section Higher Coverings}. The theory in this latter part of the article is developed in the context of a so-called \emph{tower of strongly Birkhoff Galois structures}. Our second goal is precisely to properly describe the construction of such a tower in a context which is sufficiently general to include not only all classical examples worked out in a semi-abelian or an exact Mal'tsev context, but also our leading examples of racks and quandles. This endeavour---the development of a basic higher Galois theory, valid in non-Mal'tsev contexts---takes place in the first half of the present article, in Section~\ref{Section Preliminaries}--\ref{Intermediate results}.

We give a brief overview of the main results in the text. Section~\ref{Section Preliminaries} recalls well-known concepts such as categories of higher arrows and extensions, as well as (towers of) Galois structures from the literature. In Section~\ref{Section Calculus} we establish key stability properties, which are usually considered in a Mal'tsev context, in a setting which is sufficiently general for our purposes. This involves properties of pullbacks and pushouts, effective descent, etc. In Section~\ref{Intermediate results} we work towards our first concrete aim, namely a collection of results allowing us to show that under certain convenient conditions, a Galois structure which satisfies the strong Birkhoff condition induces an entire tower of Galois structures. This involves: a higher-dimensional version of the well-known equivalence between the Birkhoff condition and closedness under quotients of the class of coverings (Proposition~\ref{PropositionBirkhoffIffClosedUnderQuotient}); a result on stability under quotients along double extensions of the class of coverings (Proposition~\ref{CentralExtensionsAreStableAlongDoubleExtensions}); and a construction of the reflector from extensions to coverings (Proposition~\ref{PropReflctOfCentralExtensions}); together with an explanation why all this is applicable both in the classical Mal'tsev context and in the examples of racks and quandles.

We change gears in Section~\ref{Section Discrete Fibrations Double} where we start the development of a theory of \emph{symmetric coverings}, which allows a non-inductive treatment of the concept of a covering. The basic idea is very simple and based on the concept of a \emph{discrete fibration}, which is what we call an $n$-fold extension which, considered as an $n$-cube, is a limit diagram---the easiest example being a pullback square of regular epimorphisms. Recall (details are given in the text) that, in a Galois structure $\Gamma = (\Y,\X,L,\I,\eta,\epsilon,\E)$ involving an adjunction $L\dashv \I$, an extension $c$ as in the diagram
\[
	\vcenter{\xymatrix@!0@=4em{ \I L(T) \ar[d]_-{\I L(t)} & T \pullback \pullbackleft \ar[d]^-{t} \ar[l]_-{\eta_{T}} \ar[r] & A \ar[d]^-{c} \\
	\I L(E) & E \ar[l]^-{\eta_{E}} \ar[r]_-{e} & B
	}}
\]
is a \emph{covering} if and only if there is an extension $e$ such that the pullback of $c$ along~$e$ is a \emph{trivial covering} $t$, which means that the reflection square on the left is a pullback. Equivalently, $t$ is a pullback of some extension $x$ in the given reflective subcategory~$\X$ of the ground category $\Y$. In what follows, such an $x$ is called a \emph{primitive covering}. In other words, $c$ is a covering when a span of discrete fibrations $x\ot t\to c$ exists in $\Y$, where $x$ is a primitive covering---an extension whose domain and codomain are objects in the full subcategory $\X$ of $\Y$. This idea applies at arbitrary levels of a given tower of Galois structures; it suffices to replace the pullback squares with higher-order discrete fibrations as defined above. Our main result here is Theorem~\ref{Main Theorem}, which says that in the context of an appropriate tower of Galois structures, this approach does indeed characterise the inductively defined coverings of Galois theory---an $n$-fold extension $\alpha$ is a higher-dimensional covering if and only if there exists a span of discrete fibrations ${\beta\ot \tau\to \alpha}$ of order $n+1$ where $\beta$ is an $n$-cubical extension in the base category $\X$---thus showing that the concept of a higher covering is symmetric: it does not depend on the way the $n$-cube is considered as an arrow between $(n-1)$-cubes.

Section~\ref{Section Discrete Fibrations Double} introduces and studies discrete fibrations of order three, which are used in Section~\ref{Section Double Coverings} in the symmetric characterisation of double coverings (Theorem~\ref{Theorem Covering iff Symmetric Covering}). In Section~\ref{Section Higher Coverings} this is extended to arbitrary degrees, which eventually leads to the above-mentioned Theorem~\ref{Main Theorem}.

\section{Preliminaries}\label{Section Preliminaries}

\subsection{Higher morphisms}\label{SubsectionHigherMorphisms}
In order to study higher extensions and higher categorical Galois theory, we need an appropriate description of \emph{higher morphisms} (or \emph{higher arrows}) in a category $\Y$. We refer to~\cite{EGoeVdL} for a detailed presentation.

Higher morphisms are traditionally defined inductively as arrows between arrows (between arrows\dots). Given a category $\Y$, the \defn{arrow category} is $\Arr(\Y)$, whose objects are arrows (morphisms) in $\Y$. A morphism $\alpha\colon{f_A \to f_B}$ in such a category of morphisms is given by a pair of morphisms in $\Y$, the \defn{top} and \defn{bottom components} of $\alpha$ which we denote $\alpha = (\alpha_{\ttop},\alpha_{\pperp})$ and which give rise to a commutative diagram~\eqref{EquationDoubleArrow}.
\begin{equation}\label{EquationDoubleArrow}
	\vcenter{\xymatrix@!0@=4em{
	A_{\ttop} \ar[r]^-{\alpha_{\ttop}} \ar[d]_-{f_A} & B_{\ttop} \ar[d]^-{f_B} \\
	A_{\pperp} \ar[r]_-{\alpha_{\pperp}} & B_{\pperp}
	}}
\end{equation}
For $n\geq 2$, an \defn{$n$-arrow} or \defn{$n$-fold arrow} in $\Y$ is then an object of the category $\Arr^n(\Y)$, which is defined inductively as $\Arr(\Arr^{n-1}(\Y))$. We are also interested in an $n$-fold arrow's underlying commutative diagram in the base category $\Y$, which we call an \defn{$n$-cubical diagram} in $\Y$.

Given an $n$-cubical diagram $\alpha$, its \defn{initial object} $\alpha_{\meet}$ is the initial object of the given diagram, when it is viewed as a subcategory of $\Y$. Dually, the \defn{terminal object} $\alpha_{\join}$ is the terminal object of that category. Given a morphism of $n$-arrows $\phi\colon \alpha \to \beta$, the \defn{initial component} of $\phi$ is, in the underlying $(n+1)$-cubical diagram of $\phi$, the morphism with domain $\alpha_{\meet}$ and codomain $\beta_{\meet}$. In general, what we call the \defn{$\Y$-components} of $\phi$ is the collection, within the underlying $(n+1)$-cubical diagram, of the $2^n$ arrows between the $n$-cubical diagrams of $\alpha$ and $\beta$.

We consider all the categories of higher arrows above a given category $\C_0$ simultaneously. The sequence $(\Arr^n(\C_0))_{n\in \N}$ is called the \defn{tower of higher arrows} above $\C_0$, and each category $\Arr^n(\C_0)$ a \defn{level} in this tower. Our aim is to study contexts in which a Galois theory at the base level $\C_0$ induces Galois theories in higher dimensions $\Arr^n(\C_0)$, at each level of a similar tower of higher morphisms called \emph{extensions}~\cite{EGVdL,Ever2010}.

\subsection{Extensions and higher extensions}\label{Subsection Ext and higher}
Categorical Galois theory is defined relatively to a chosen class of morphisms called \defn{extensions} (which are in some sense parallel to the fibrations in homotopy theory). Given a class of extensions at the base level $\C_0$, there is a convenient induced class of extensions at each level~$\Arr^n(\C_0)$. In this article we write $\E_1$ for the chosen class of extensions in $\C_0$ and always choose it to be the class of regular epimorphisms. (Note that the index is $1$ rather than $0$, because it consists of arrows rather than objects.) By induction we define a class of extensions $\E_{n+1}$ at each level~$\Arr^n(\C_0)$.

Given a category $\Y$ with a chosen class of morphisms $\E$, we consider a commutative square as in~\eqref{EquationDoubleExtension}, together with the pullback $A_{\pperp} \times_{B_{\pperp}} B_{\ttop}$ of $\alpha_{\pperp}$ and $f_B$.
\begin{equation}\label{EquationDoubleExtension}
	\vcenter{\xymatrix@!0@C=2.5em@R=2.5em{
	A_{\ttop} \ar[rrr]^-{\alpha_{\ttop}} \ar[rd]|(.45){p} \ar[ddd]_{f_A} & & & B_{\ttop} \ar[ddd]^-{f_B} \\
	& A_{\pperp} \times_{B_{\pperp}} B_{\ttop} \ar[rru]_-{\pi_2} \ar[ldd]^-{\pi_1} \\
	\\
	A_{\pperp} \ar[rrr]_-{\alpha_{\pperp}} & & & B_{\pperp}
	}}
\end{equation}
The outer commutative square in this diagram (which, remember, can also be viewed as a morphism $\alpha\colon{f_A \to f_B}$ in $\Arr(\Y)$) is called an \defn{$\E$--double extension}~\cite{Janelidze:Double} when all morphisms in the diagram are in the class $\E$---including the square's so-called \defn{comparison map} $p$.

\begin{example}
	If $\E$ is the class of regular epimorphisms in a regular category, then an $\E$--double extension is a \emph{regular pushout} in the sense of~\cite{Bou2003}. These play a key role in the characterisation of exact Mal'tsev categories amongst regular categories: see Proposition~5.4 in~\cite{Carboni-Kelly-Pedicchio}.
\end{example}

Back to the tower of higher arrows above $\C_0$. Let us assume by induction that $n$-fold extensions are defined and form a class $\E_n$ of morphisms in $\Arr^{n-1}(\C_0)$. A morphism in $\Arr^n(\C_0)$ is called an \defn{$(n+1)$-fold extension} if its underlying diagram in $\Arr^{n-1}(\C_0)$ is an $\E_n$-double extension. The resulting class of $(n+1)$-fold extensions is denoted $\E_{n+1}$.

We then define the \defn{tower of extensions} over the base category $\C_0$ to be the sequence of categories $(\C_n)_{n\in \N}$, where $\C_n \coloneq \Ext^n(\C_0)$ is the full subcategory of~$\Arr^n(\C_0)$ determined by the class of $n$-fold extensions $\E_n$. A category $\C_n$ in this sequence is called the $n$-th \defn{level} of the tower of extensions. The elements of the class $\E_{n+1}$ can either be viewed as the objects of $\Ext^{n+1}(\C_0)$ or as the extensions in $\Ext^n(\C_0)$. Depending on the value of $n$, we can even view them as the double extensions in $\Ext^{n-1}(\C_0)$, etc.

As for higher arrows, we are also interested in an $n$-fold extension's underlying commutative diagram in the base category $\C_0$, which we call an \defn{$n$-cubical extension} in $\C_0$. In Section~\ref{Section Discrete Fibrations Double}, we also need the more general concept of \defn{$n$-cubical extension} in $\C_k$ which is the underlying commutative diagram in $\C_k$, of an $(n+k)$-fold extension where $k\geq0$.

In general, given a category $\Y$, we write $\Arr(B)$ for the slice category over a given object $\beta$ of $\Y$ (often denoted $(\Y\downarrow B)$ or $\Y/_B$ in the literature). For a chosen class of morphisms $\E$, we also write $\Ext(B)$ for the full subcategory of $\Arr(B)$ whose objects are in $\E$.

\begin{example}\label{ExampleHigherExtensions}
	Given a pair of $2$-fold extensions $\gamma$ and $\alpha$ (i.e., $\E_1$-double extensions, objects in $\C_2$), a morphism $(\sigma,\beta)\colon{\gamma \to \alpha}$ between these is given by a $2$-cubical diagram in $\C_1$ (on the left in Figure~\ref{FigureThreeFoldExtension}) or equivalently a $3$-cubical diagram in $\C_0$ (on the right),
	\begin{figure}
		$\vcenter{\xymatrix@!0@=3em{f_C \ar[rrr]^{\sigma = (\sigma_{\ttop},\sigma_{\pperp})} \ar[rd]^-{\pi} \ar[ddd]|-{\gamma = (\gamma_{\ttop},\gamma_{\pperp})} & & & f_A \ar[ddd]|{\alpha=(\alpha_{\ttop},\alpha_{\pperp})} \\
			& f_P \ar[rru]|-{} \ar[ldd]|-{} \\
			\\
			f_D \ar[rrr]_{\beta = (\beta_{\ttop},\beta_{\pperp})} & & & f_B
			}}
			\qquad \qquad
			\vcenter{\xymatrix@!0@=3em{
			& C_{\ttop} \ar[rr]^-{\sigma_{\ttop}} \ar[dd]|(.25){\gamma_{\ttop}}|-{\hole} \ar[ld]_-{f_C} && A_{\ttop} \ar[dd]^-{\alpha_{\ttop}} \ar[ld]|-{f_A} \\
			C_{\pperp} \ar[rr]^(.75){\sigma_{\pperp}} \ar[dd]_-{\gamma_{\pperp}} && A_{\pperp} \ar[dd]^(.25){\alpha_{\pperp}} \\
			& D_{\ttop} \ar[ld]_-{f_D} \ar[rr]^(.25){\beta_{\ttop}}|-{\hole} && B_{\ttop} \ar[ld]^-{f_B} \\
			D_{\pperp} \ar[rr]_-{\beta_{\pperp}} && B_{\pperp}}}$
		\caption{A $3$-fold extension}\label{FigureThreeFoldExtension}
	\end{figure}
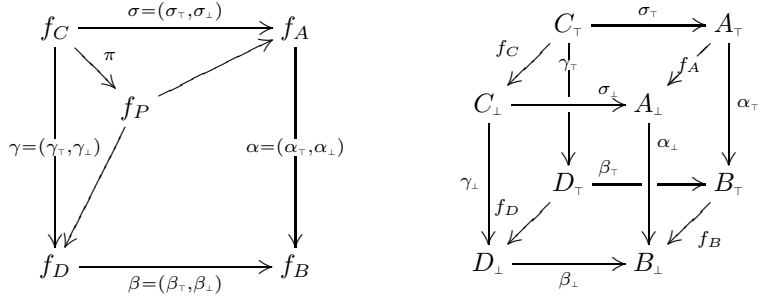
	where $f_P$ is the pullback of $\alpha$ and $\beta$. It is a \emph{$3$-fold extension} if $\sigma$, $\beta$ and the comparison map $\pi = (\pi_{\ttop},\pi_{\pperp})$ are also $2$-fold extensions. That is to say, a $3$-fold extension is the same thing as an $\E_2$-double extension. The initial object of $(\sigma,\beta)$ is $C_{\ttop}$ and the terminal object is $B_{\pperp}$. The initial component of $(\sigma,\beta)\colon \gamma \to \alpha$ is $\sigma_{\ttop}$. The $\C_0$-components of $(\sigma,\beta)\colon \gamma \to \alpha$ are $\sigma_{\ttop}$, $\sigma_{\pperp}$, $\beta_{\ttop}$ and $\beta_{\pperp}$.
\end{example}

\subsection{Categorical Galois theory}\label{Subsection Categorical Galois Theory}
From now on, we assume that our base level $\C_0$ is a Barr exact category~\cite{BaGrOs1971}, and (as before) $\E_1$ is the class of regular epimorphisms in $\C_0$ (the ($1$-fold) extensions).

Given an arbitrary category $\Y$ with chosen class of extensions $\E$, we define an \defn{$\E$-reflective subcategory} $\X$ of $\Y$ to be a full (replete) subcategory with inclusion functor $\I \colon \X \to \Y$ which admits a left adjoint $L \colon {\Y \to \X}$ with unit $\eta$, counit $\epsilon$, and such that for each object $Y$ in $\Y$, the component $\eta_Y\colon Y \to \I L (Y)$ of the unit is in $\E$. We omit the functor $\I$ if confusion is unlikely.

Throughout, we let $\B_0$ be an $\E_1$-reflective subcategory of $\C_0$, with inclusion functor $\I_0 \colon \B_0 \to \C_0$ and left adjoint $F_0\colon { \C_0 \to \B_0}$ whose unit is $\eta^0$ and whose counit is $\epsilon^0$.

In general, an $\E$-reflector $L \colon \Y \to \X$ (or equivalently the entire adjunction, or the reflective subcategory $\X$) is said to satisfy the \defn{$\E$-Birkhoff condition} when given an extension $f\colon{A\to B}$ in $\Y$, the \defn{$L$-reflection square at $f$}, i.e., the morphism $(\eta_A,\eta_B) \colon f \to \I L (f)$ in $\Ext(\Y)$, or more precisely its underlying $2$-cubical diagram in $\Y$
\begin{equation}\label{DiagramReflectionSquare}
	\vcenter{\xymatrix@!0@=4em{
	A \ar[r]^-{\eta_A} \ar[d]_{f} & \I L(A) \ar[d]^-{\I L(f)} \\
	B \ar[r]_-{\eta_B} & \I L(B)
	}}
\end{equation}
is a pushout square in $\Y$. Finally, such an adjunction $L \dashv \I$ (or equivalently the reflector $L$, or the reflective subcategory $\X$) is said to be \defn{strongly $\E$-Birkhoff} if for each extension $f\colon A \to B$ in $\Y$, the reflection square at $f$ (as in Diagram~\ref{DiagramReflectionSquare}) is an $\E$--double extension.

We investigate examples of towers $(\C_n)_{n\in \N}$ where such an adjunction $F_0 \dashv \I_0$ at the base level $\C_0$ induces a similar adjunction $F_n \dashv \I_n$ at each level $\C_n$. We will for instance show that, given such a tower of adjunctions, if $F_n \dashv \I_n$ satisfies the strong $\E_{n+1}$-Birkhoff condition, then $F_{n+1} \dashv \I_{n+1}$ satisfies the $\E_{n+2}$-Birkhoff condition. In certain concrete examples---see Section~\ref{Intermediate results}---we will then be able to show that $F_{n+1} \dashv \I_{n+1}$ is in fact \emph{strongly} $\E_{n+2}$-Birkhoff.

At the base level $\C_0$ (which is Barr exact by assumption), it is known~\cite{JK1994} that~$\B_0$ satisfies the $\E_1$-Birkhoff condition if and only if $\B_0$ is closed under quotients along extensions in~$\C_0$. A similar result holds in higher dimensions---see Proposition~\ref{PropositionBirkhoffIffClosedUnderQuotient}.

It was shown in~\cite[Proposition 2.6]{EGVdL} that in a suitably general context where $L \dashv \I$ is strongly $\E$-Birkhoff we obtain a so-called \emph{admissible Galois structure}: in essence, we have all we need to do Galois theory.

\begin{convention}\label{ConventionGaloisStructure} For our purposes, a \defn{Galois structure} (see~\cite{JanComo1991})
	\[
		\Gamma \coloneq (\Y,\X,L,\I,\eta,\epsilon,\E),
	\]
	is the data of the inclusion $\I$ of an $\E$-reflective subcategory $\X$ in a category $\Y$, with left adjoint $L\colon{\Y \to \X}$, unit $\eta$, counit $\epsilon$ and a class $\E$ of extensions chosen amongst the morphisms of $\Y$. The class $\E$ is subject to the following conditions:
	\begin{enumerate}
		\item $\E$ contains all isomorphisms, and $\E$ is closed under composition;
		\item the image of an extension by the reflector $L$ yields an extension;
		\item pullbacks along extensions exist, and a pullback of an extension is an extension.
	\end{enumerate}
	In the present article, $\E$ will always be a class of regular epimorphisms. Finally, taking pullbacks along extensions should be a ``well behaved algebraic operation'', i.e., we require our extensions to be of \emph{effective $\E$-descent} in $\Y$ (see~\cite{JanTho1994,JanSoTho2004} and Section~\ref{SectionBeyondBarrExactness} below).
\end{convention}

We study examples of towers $(\C_n)_{n \in \N}$ that carry an admissible Galois structure (in the sense of~\cite{Jan1990}) at each level. By the above it suffices that ``all adjunctions are strongly Birkhoff'', which is a more convenient condition to work with in this inductive context.

In general, given an admissible (or strongly Birkhoff) Galois structure $ \Gamma$ as in Convention~\ref{ConventionGaloisStructure}, a \defn{trivial $\Gamma$-covering} is an extension $t\colon T \to E$ whose $L$-reflection square (on the left below) is a pullback.
\begin{equation*}\label{DiagramDefinitionTrivialCovering}
	\vcenter{\xymatrix@!0@=4em{ L(T) \ar[d]_-{L(t)} & T \pullback \pullbackleft \ar[d]^-{t} \ar[l]_-{\eta_{T}} \ar[r] & A \ar[d]^-{c} \\
	L(E) & E \ar[l]^-{\eta_{E}} \ar[r]_-{e} & B
	}}
\end{equation*}
Equivalently~\cite{JK1994}, the extension $t$ is a pullback of some \defn{primitive $\Gamma$-covering}~$x$, which is an extension that lies in the given full reflective subcategory~$\X$ of the ground category $\Y$.

A \defn{$\Gamma$-covering} (sometimes called \defn{central extension}) is an extension $c\colon A \to B$ for which there is another extension $e\colon E \to B$ such that the pullback $t$ of $c$ along~$e$ is a trivial covering. We then say that $c$ is \defn{split by $e$}. In other words, $c$ is a $\Gamma$-covering when a span of discrete fibrations $x\ot t\to c$ exists in $\Y$, where $x$ is a primitive $\Gamma$-covering. (See, for instance, Figure~1 in~\cite{Ren2020}.) We define $\Cov(\Y)$ to be the full subcategory of $\Ext(\Y)$ determined by the coverings. Finally, a \defn{normal $\Gamma$-covering} $f$ is such that the projections of its kernel pair are trivial coverings, i.e., $f$ is split by itself.

Note that, as a consequence of admissibility, these classes of extensions are all closed under pullbacks along extensions---see for instance~\cite{JK1994}. In fact, admissibility essentially amounts to the condition that the inclusion
\[
	\xymatrix@=3em{\Ext(\Y) \ar@<1ex>@{.>}[r]^-{\Triv} \ar@{}[r]|-{\bot} & \TCov(\Y) \ar@<1ex>@{->}[l]^-{\supseteq}}
\]
of the category $\TCov(\Y)$ of trivial coverings into the category of extensions admits a left adjoint ($\Triv$).

The class of coverings measures a ``sphere of influence'' of $\X$ in $\Y$ and plays an important role in diverse areas of mathematics (for different examples of such an adjunction). Loosely speaking, the \emph{Fundamental Theorem} of categorical Galois theory classifies coverings via actions of a suitable \emph{Galois groupoid} in $\Y$.

\subsection{A tower of Galois structures}\label{Subsection Tower}

We can now explain how a Galois structure
\[
	\Gamma_0 \coloneq (\C_0,\B_0,F_0,\I_0,\eta^0,\epsilon^0,\E_1),
\]
at the base level $\C_0$ may induce a Galois structure $\Gamma_n$ at each level of the tower $(\C_n)_{n\in \N}$. The idea is that the inclusion $\I_0 \colon \B_0 \to \C_0$ induces a concept of $\Gamma_0$-covering. This, in turn, induces the inclusion $\Ii \colon \Cov(\C_0) \to \C_1=\Ext(\C_0)$ of the category of coverings $\Cov(\C_0)$ in the category of extensions $\C_1$. This inclusion functor~$\Ii$ does not share all the properties of $\I_0$. In particular, $\C_1$ is no longer Barr exact or even regular~\cite{Bar1971} in general: see the comment preceding Definition~3.4 in~\cite{EGVdL} and~\cite[Remark~2.0.1]{Ren2021}. In this sense, the class of regular epimorphisms is not appropriate for applying Galois theory in higher dimensions; instead, we use (higher) extensions. Indeed, pullbacks along higher extensions are computed component-wise, thanks to which many of the convenient properties which hold in the Barr exact $\C_0$ can be used to derive similar results in the higher-dimensional context---see Section~\ref{Section Calculus}.

The strong $\E_1$-Birkhoff condition on $F_0 \dashv \I_0$ then implies for instance that $\Cov(\C_0)$ is closed under quotients along double extensions in $\C_1$ (Subsection~\ref{SubsectionStronglyBirkhoffAndCoveringsClosedAlongExtensions}). It is often~\cite{JK:Reflectiveness} the case that $\Cov(\C_0)$ is in turn $\E_2$-reflective in $\C_1$ (Subsection~\ref{SubsectionAlgebraicExamples}). By the equivalence between ``closedness under quotients'' and the Birkhoff property (Subsection~\ref{SubsectionClosureByQuotientsAndBirkhoff}), the reflection $F_1\dashv \I_1$ is $\E_2$-Birkhoff. For it to be strongly Birkhoff, we currently have no general result. However, in concrete examples such as those presented in Subsection~\ref{Racks Quandles}, this derives easily from the permutability of the \emph{centralisation relation}, which is the universal relation to be divided out of an extension for it to become a covering.

When the adjunction $F_1\dashv \I_1$ is indeed strongly $\E_2$-Birkhoff, then we have a Galois structure
\[
	\Gamma_1 \coloneq (\C_1,\B_1,F_1,\I_1,\eta^1,\epsilon^1,\E_2)
\]
and the corresponding concept of a $\Gamma_1$-covering. We can then iterate the above process for the inclusion $\I_2\colon \Cov(\C_1)\to \C_2=\Ext(\C_1)$ and find a Galois structure~$\Gamma_2$. By proceeding inductively, we obtain a Galois structure $\Gamma_n$ for each $n$. Again, there is no guarantee that this works in general, but it works in many examples, and as we shall see in Section~\ref{Intermediate results} below, intermediate results exist that simplify the process showing this.

\begin{convention}
	Throughout this article, the categories in the adjunction at the base level of a tower of Galois structures are denoted $(\C_0,\B_0)$, with class of extensions $\E_1$. An arbitrary fixed level is written $(\C,\B)$, with class of extensions~$\E$: it is~$(\C_n,\B_n)$, with class of extensions $\E_{n+1}$, for some $n$. A generic reflective subcategory, eventually part of a Galois structure $\Gamma$ as in~\ref{ConventionGaloisStructure}, is denoted $\X\leq \Y$.
\end{convention}

\section{Calculus and properties of higher extensions}\label{Section Calculus}

Working in categories of higher morphisms is not easy. Thanks to the concept of a higher extension, we can reduce a problem or computation in $\C_n$ to a couple of problems and computations in $\C_{n-1}$, using the projections on the top and bottom components. Ideally, such a problem or computation boils all the way down to a problem or computation in the base category $\C_0$. For instance:

\begin{examples}
	\begin{enumerate}
		\item The initial object of $\C_n$ is the $n$-cube, all of whose vertices are the initial object of $\C_0$, and all of whose edges are identities.
		\item Given a parallel pair of morphisms $\alpha$, $\beta \colon f_A \rightrightarrows f_B$ in $\C_n$, if the coequaliser of the top component exists in $\C_{n-1}$, and if the pushout of this coequaliser along $f_B$ exists, then this pushout describes the coequaliser of $\alpha$ and $\beta$ in~$\C_n$.
		\item Monomorphisms in $\C_n$ are morphisms for which the top component is a monomorphism in $\C_{n-1}$. By induction, monomorphisms in $\C_n$ are morphisms such that their initial component (see Subsection~\ref{SubsectionHigherMorphisms}) is a monomorphism in $\C_0$.
	\end{enumerate}
\end{examples}

Observe that Proposition 3.5 and Lemma 3.8 from~\cite{EGVdL} easily generalise to our context (as remarked in~\cite{EGoeVdL}: see Example~1.11, Proposition~1.6 and Remark~1.7 there). Note that the following result holds when $\C_0$ is merely regular:
\begin{lemma}\label{LemmaHigherExtensionsCalculus}
	For $n>1$, let $\C_n$ be a level in the tower induced by $\C_0$.
	\begin{enumerate}
		\item If a morphism $\alpha = (\alpha_{\ttop},\alpha_{\pperp})$ in $\C_n$ is such that $\alpha_{\ttop}$ is an $n$-fold extension and $\alpha_{\pperp}$ an isomorphism, then $\alpha$ is an $(n+1)$-fold extension.
		\item The $(n+1)$-fold extensions are closed under composition.
		\item Pullbacks of arbitrary morphisms along $(n+1)$-fold extensions (exist in $\C_n$ and) are computed \emph{component-wise}. Moreover, any pullback of an $(n+1)$-fold extension is an $(n+1)$-fold extension.\noproof
	\end{enumerate}
\end{lemma}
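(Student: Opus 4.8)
The plan is to prove the three claims by induction on $n$, reducing each statement in $\C_n = \Ext^n(\C_0)$ to the corresponding statement in $\C_{n-1}$ via the top/bottom decomposition of morphisms, with the base case $\C_0$ (or $\C_1$) handled by the fact that $\C_0$ is regular and $\E_1$ is the class of regular epimorphisms. Recall that a morphism $\alpha = (\alpha_{\ttop}, \alpha_{\pperp})$ in $\C_n$ is an $(n+1)$-fold extension precisely when, in the diagram~\eqref{EquationDoubleExtension} with $\E = \E_n$, all of $\alpha_{\ttop}$, $\alpha_{\pperp}$, the pullback projections, and the comparison map $p$ lie in $\E_n$; since pullbacks of $n$-fold extensions along $n$-fold extensions exist and are again $n$-fold extensions (the inductive hypothesis for part~(3)), the pullback $A_{\pperp}\times_{B_{\pperp}} B_{\ttop}$ makes sense and its projections are automatically in $\E_n$, so the real content is that $\alpha_{\ttop}$, $\alpha_{\pperp}$ and $p$ are $n$-fold extensions.

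For part~(1): if $\alpha_{\ttop}$ is an $n$-fold extension and $\alpha_{\pperp}$ is an isomorphism, then in~\eqref{EquationDoubleExtension} the pullback $A_{\pperp}\times_{B_{\pperp}} B_{\ttop}$ is isomorphic to $B_{\ttop}$ (pulling back along an iso), $\pi_1 \cong f_B \in \E_n$, $\pi_2$ is an iso, and the comparison map $p$ is isomorphic to $\alpha_{\ttop}$, hence in $\E_n$; all four morphisms are in $\E_n$, so $\alpha \in \E_{n+1}$. I would also note the base case $n=1$ is the elementary fact that a commutative square with bottom an iso and top a regular epi is a regular pushout. For part~(2): given composable $(n+1)$-fold extensions $\alpha, \alpha'$, their composite has top component $\alpha'_{\ttop}\comp\alpha_{\ttop}$ and bottom component $\alpha'_{\pperp}\comp\alpha_{\pperp}$, which are composites of $n$-fold extensions, hence $n$-fold extensions by the inductive hypothesis for~(2); the comparison map of the composite square factors as a composite involving the two comparison maps and a pullback of one of them, so it too is an $n$-fold extension by parts~(3) and~(2) at level $n$. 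This is the standard ``horizontal pasting of regular pushouts'' argument, transported one level up.

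For part~(3): let $\beta\colon g_C \to g_B$ be an arbitrary morphism in $\C_n$ and $\alpha\colon f_A \to f_B$ an $(n+1)$-fold extension with common codomain $f_B$. I would form the object-wise pullback, i.e. take $P_{\ttop} \coloneq C_{\ttop}\times_{B_{\ttop}} A_{\ttop}$ and $P_{\pperp} \coloneq C_{\pperp}\times_{B_{\pperp}} A_{\pperp}$ in $\C_{n-1}$ (which exist because $\alpha_{\ttop}, \alpha_{\pperp}\in\E_n$ and pullbacks along $n$-fold extensions exist by the inductive hypothesis), equip them with the induced map $g_P\colon P_{\ttop}\to P_{\pperp}$, and check by the universal property that the resulting square is the pullback of $\beta$ along $\alpha$ in $\C_n$. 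Then the projection $\pi\colon g_P \to g_C$ has top component the pullback of $\alpha_{\ttop}$ along $\beta_{\ttop}$ and bottom component the pullback of $\alpha_{\pperp}$ along $\beta_{\pperp}$, both in $\E_n$; it remains to verify that $\pi$ is an $(n+1)$-fold extension, i.e. that its comparison map is in $\E_n$. Here the comparison map of $\pi$ is itself a pullback of the comparison map $p$ of $\alpha$ (a routine but slightly delicate diagram chase identifying $P_{\pperp}\times_{C_{\pperp}} C_{\ttop}$ with a pullback of $A_{\pperp}\times_{B_{\pperp}}B_{\ttop}$), hence lies in $\E_n$ by the inductive hypothesis for~(3).

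The main obstacle is the last verification in part~(3): identifying the comparison map of the pulled-back square with a pullback of the comparison map of $\alpha$. This requires carefully untangling an iterated pullback in $\C_{n-1}$ and invoking the pullback-lemma (composition and cancellation of pullback squares) together with the inductive hypothesis that pullbacks along $n$-fold extensions are computed component-wise. Everything else is either a direct application of the inductive hypotheses or the regular-category base case, and I would present those steps briefly, spending most of the written proof on organising this compatibility-of-comparison-maps diagram; in practice, as the excerpt indicates, this is exactly the content that ``easily generalises'' from Proposition~3.5 and Lemma~3.8 of~\cite{EGVdL}, so I would cite that parallel and keep the argument concise.
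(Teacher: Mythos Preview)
Your argument is correct and is exactly the standard inductive reduction that underlies the cited references. The paper itself gives no proof of this lemma at all: it is stated with a bare \verb|\qed| and only the preceding sentence points to Proposition~3.5 and Lemma~3.8 of~\cite{EGVdL} (and Example~1.11, Proposition~1.6, Remark~1.7 of~\cite{EGoeVdL}), remarking that those results ``easily generalise'' to the present non-Mal'tsev setting. So there is nothing to compare against beyond those citations, and your sketch is precisely the content they contain.

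One small point worth making explicit in your write-up of part~(3): you should state that the component-wise pullback object $g_P$ is itself an $n$-fold extension (so that it really is an object of $\C_n$). This follows immediately from what you already have---$g_P$ factors as the comparison map of $\pi$ (a pullback of $p\in\E_n$, hence in $\E_n$) followed by the projection $P_{\pperp}\times_{C_{\pperp}}C_{\ttop}\to P_{\pperp}$ (a pullback of $f_C\in\E_n$), so $g_P\in\E_n$ by the inductive instance of~(2)---but the paper does single this out in the paragraph following the lemma (``moreover $f$ is an $n$-fold extension''), so it is worth flagging rather than leaving implicit.
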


Given a pair of morphisms $\alpha = (\alpha_{\ttop},\alpha_{\pperp})\colon{f_A \to f_B}$ and $\beta = (\beta_{\ttop},\beta_{\pperp})\colon{f_C \to f_B}$ in $\C_n$, their \defn{component-wise pullback}, if it exists, is given by the commutative diagram depicted in Figure~\ref{DiagramComponentWisePullback}
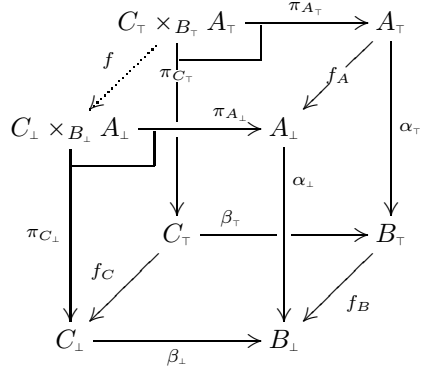
\begin{figure}
	$\xymatrix@!0@=4em{
		& C_{\ttop} \times_{B_{\ttop}} A_{\ttop} \pullback \ar[rr]^-{\pi_{A_{\ttop}}} \ar[dd]|(.25){\pi_{C_{\ttop}}}|-{\hole} \ar@{{}{..}{>}}[ld]_-{f} && A_{\ttop} \ar[dd]^-{\alpha_{\ttop}} \ar[ld]|-{f_A} \\
		C_{\pperp} \times_{B_{\pperp}} A_{\pperp} \pullback \ar[rr]^(.75){\pi_{A_{\pperp}}} \ar[dd]_-{\pi_{C_{\pperp}}} && A_{\pperp} \ar[dd]^(.25){\alpha_{\pperp}} \\
		& C_{\ttop} \ar[ld]_-{f_C} \ar[rr]^(.25){\beta_{\ttop}}|-{\hole} && B_{\ttop} \ar[ld]^-{f_B} \\
		C_{\pperp} \ar[rr]_-{\beta_{\pperp}} && B_{\pperp}}$
	\caption{The component-wise pullback of a pair of double arrows}\label{DiagramComponentWisePullback}
\end{figure}
whose front and back faces are pullbacks in~$\C_{n-1}$. Provided that $\alpha$ is an $(n+1)$-fold extension, Lemma~\ref{LemmaHigherExtensionsCalculus} above says that the component-wise pullback exists (it is given by the pullback in the arrow category~$\Arr^{n}(\C_0)$), moreover $f$ is an $n$-fold extension, and the pullback of $\alpha$ and $\beta$ in $\C_n$ is given by $f$ together with the projections $(\pi_{A_{\ttop}},\pi_{A_{\pperp}})$ and $(\pi_{C_{\ttop}},\pi_{C_{\pperp}})$, where the latter is actually an $(n+1)$-fold extension. In particular, the kernel pair of an $(n+1)$-fold extension $\alpha=(\alpha_{\ttop},\alpha_{\pperp})$ exists in $\C_n$ and is given by the kernel pairs (in $\C_{n-1}$) of $\alpha_{\ttop}$ and $\alpha_{\pperp}$ in each component (together with the induced morphism between those---see Diagram~\ref{DiagramBarrKock} below). Moreover, the legs of such kernel pairs are themselves $(n+1)$-fold extensions---see also Proposition~\ref{PropositionBarrKock}.

Note that when $\C_0$ is a Barr exact \emph{Mal'tsev} category~\cite{CLP1991}, double extensions are the same as pushout squares of extensions, and, as a rule, higher extensions are easier to identify---especially when split epimorphisms are involved. Unlike that stronger context, in our present setting it is not necessarily the case that a split epimorphism of $n$-fold extensions is an $(n+1)$-fold extension. The lack of such arguments is a challenge in this more general context where $\C_0$ need not be a Mal'tsev category.

As we may conclude from~\cite[Proposition 3.3]{EGoeVdL} and the fact that our categories are not Mal'tsev, the axiom (E4) of ``right cancellation'' considered there (see also~\cite[Lemma 3.8]{EGVdL}) cannot hold here. We have the following weaker version:

\begin{lemma}\label{LemmaWeakRightCancellation}
	If the composite $\beta \comp \alpha \colon f_A \to f_C$ is an $(n+1)$-fold extension, and $\alpha \colon f_A \to f_B$ is a commutative square of $n$-fold extensions in $\C_{n-1}$, then $\beta$ is an $(n+1)$-fold extension.
\end{lemma}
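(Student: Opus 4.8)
The plan is to argue by induction on $n\geq 1$, reducing the claim about $(n+1)$-fold extensions to the same claim about $n$-fold extensions, applied to the top and bottom components and to one comparison map, together with the calculus of higher extensions of Lemma~\ref{LemmaHigherExtensionsCalculus}. Write $f_A\colon A_{\ttop}\to A_{\pperp}$, $f_B\colon B_{\ttop}\to B_{\pperp}$ and $f_C\colon C_{\ttop}\to C_{\pperp}$ for the three objects of $\C_n$ involved (these are $n$-fold extensions in $\C_{n-1}$), and $\alpha=(\alpha_{\ttop},\alpha_{\pperp})$, $\beta=(\beta_{\ttop},\beta_{\pperp})$ for the two morphisms. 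Unwinding the hypotheses: ``$\alpha$ is a commutative square of $n$-fold extensions'' means $\alpha_{\ttop},\alpha_{\pperp},f_A,f_B\in\E_n$, and ``$\beta\comp\alpha\in\E_{n+1}$'' means $\beta_{\ttop}\comp\alpha_{\ttop},\beta_{\pperp}\comp\alpha_{\pperp},f_A,f_C\in\E_n$ together with the comparison map $c\colon A_{\ttop}\to A_{\pperp}\times_{C_{\pperp}}C_{\ttop}$ lying in $\E_n$. What we must prove is that $\beta_{\ttop},\beta_{\pperp}\in\E_n$ (the maps $f_B,f_C$ already being in $\E_n$) and that the comparison map $c'\colon B_{\ttop}\to B_{\pperp}\times_{C_{\pperp}}C_{\ttop}$ lies in $\E_n$.

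For the base case $n=1$ we work in the regular category $\C_0$, where $\E_1$ is the class of regular epimorphisms. Here right cancellation is available: if $g\comp f$ is a regular epimorphism then so is $g$ (recall that in a regular category regular, strong and extremal epimorphisms coincide, and that $g\comp f$ being extremal forces $g$ to be). Hence $\beta_{\ttop}$ and $\beta_{\pperp}$ are regular epimorphisms. For the comparison maps, the pasting law for pullbacks gives a canonical isomorphism $A_{\pperp}\times_{C_{\pperp}}C_{\ttop}\cong A_{\pperp}\times_{B_{\pperp}}\bigl(B_{\pperp}\times_{C_{\pperp}}C_{\ttop}\bigr)$, so the induced map $\Phi\colon A_{\pperp}\times_{C_{\pperp}}C_{\ttop}\to B_{\pperp}\times_{C_{\pperp}}C_{\ttop}$ is a pullback of $\alpha_{\pperp}$, hence a regular epimorphism. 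A direct verification shows $\Phi\comp c=c'\comp\alpha_{\ttop}$; the left-hand side is a composite of regular epimorphisms, so $c'\comp\alpha_{\ttop}$ is a regular epimorphism, and right cancellation along the regular epimorphism $\alpha_{\ttop}$ forces $c'$ to be one. Thus $\beta$ is a double extension.

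For the inductive step $n\geq 2$, the same three points go through, with the lemma at level $n-1$ taking over the role of right cancellation. Since $\alpha_{\ttop},\alpha_{\pperp}\in\E_n$, each of them is an $\E_{n-1}$--double extension, hence a commutative square of $(n-1)$-fold extensions in $\C_{n-2}$; applying the induction hypothesis to the composites $\beta_{\ttop}\comp\alpha_{\ttop}\in\E_n$ and $\beta_{\pperp}\comp\alpha_{\pperp}\in\E_n$ yields $\beta_{\ttop},\beta_{\pperp}\in\E_n$. For the comparison map, the calculus of Lemma~\ref{LemmaHigherExtensionsCalculus} (and the component-wise description of pullbacks following it) ensures that the pullbacks $A_{\pperp}\times_{C_{\pperp}}C_{\ttop}$ and $B_{\pperp}\times_{C_{\pperp}}C_{\ttop}$ exist and are objects of $\C_{n-1}$, being obtained by pulling back the $n$-fold extension $f_C$; that the induced map $\Phi$ between them is again a pullback of $\alpha_{\pperp}$ and hence lies in $\E_n$; and that $\Phi\comp c=c'\comp\alpha_{\ttop}$ as in the base case. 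As $\E_n$ is closed under composition, $c'\comp\alpha_{\ttop}\in\E_n$, and applying the induction hypothesis once more, now with the commutative square of $(n-1)$-fold extensions $\alpha_{\ttop}$, gives $c'\in\E_n$. Together with $\beta_{\ttop},\beta_{\pperp},f_B,f_C\in\E_n$ this exhibits $\beta$ as an $(n+1)$-fold extension.

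The main obstacle is exactly the failure of unrestricted right cancellation (the axiom (E4) mentioned just before the statement): one cannot simply cancel $\alpha$ from $\beta\comp\alpha\in\E_{n+1}$. The point of the proof is that the extra hypothesis on $\alpha$, namely that it is a commutative square of lower-fold extensions, is precisely what allows one to invoke the weak right cancellation statement one dimension down---so the lemma bootstraps from the regular-category base case. The remaining care concerns the bookkeeping around the comparison map: one must know that all the pullbacks involved genuinely live in $\C_{n-1}$ and that $\Phi$ and the comparison maps lie in $\E_n$, which relies on the component-wise computation of pullbacks along higher extensions, on the fact that a pullback of an $n$-fold extension is again one, and on the fact that the domain and codomain of an $n$-fold extension are $(n-1)$-fold extensions.
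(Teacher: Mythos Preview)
Your proof is correct and follows the same overall strategy as the paper: induction on $n$, with the base case given by ordinary right cancellation for regular epimorphisms in the regular category $\C_0$, and the inductive step handling first the components $\beta_{\ttop}$, $\beta_{\pperp}$ and then the comparison map via the component-wise pullback calculus of Lemma~\ref{LemmaHigherExtensionsCalculus}.

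The only noteworthy difference is in the treatment of the comparison map. The paper introduces the intermediate pullback $P_1=A_{\pperp}\times_{B_{\pperp}}B_{\ttop}$ and argues in two steps (first $\phi_3\in\E_n$, then $\phi_2\in\E_n$), whereas you bypass $P_1$ entirely: writing $c$ for the comparison map of $\beta\comp\alpha$ and $\Phi$ for the pullback of $\alpha_{\pperp}$, you use the single identity $\Phi\comp c = c'\comp\alpha_{\ttop}$ and one application of the induction hypothesis (with $\alpha_{\ttop}\in\E_n$ playing the role of the ``commutative square of $(n-1)$-fold extensions''). In the paper's notation this is $q_2\comp(\phi_3\comp\phi_1)=\phi_2\comp(\pi_2\comp\phi_1)$, so the two arguments are really the same pasting-of-pullbacks computation read slightly differently; your version is a bit more economical and has the minor advantage that the first factor in the cancellation step is $\alpha_{\ttop}$, which is given to lie in $\E_n$ by hypothesis.
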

\begin{proof}
	Since $\beta_{\ttop} \comp \alpha_{\ttop}$ and $\beta_{\pperp} \comp \alpha_{\pperp}$ are $n$-fold extensions, by induction $\beta$ is a square of $n$-fold extensions in $\C_{n-1}$. Consider the commutative diagram in Figure~\ref{DiagramCompositeOfDExtension}
	\begin{figure}
		$\xymatrix@!0@=3em{
			A_1 \ar[rrrr]^{\alpha_{\ttop}} \ar[rd]^{\phi_1} \ar[dddd]_{f_A} & & & & B_1 \ar[rd]^{\phi_2} \ar[dddd]|(.33){\hole}_(.6){f_B} \ar[rrrr]^{\beta_{\ttop}} & & & & C_1 \ar[dddd]^{f_C}\\
			& P_1 \ar[rd]^{\phi_3} \ar[rrru]|-{\pi_2} \ar[lddd]|-{\pi_1}& & & & P_2 \ar[rrru]_-{p_2} \ar[lddd]^-{p_1} &&&\\
			& & P_3 \ar[lldd]^-{q_1} \ar[rrru]_(.3){q_2} & & & & & & \\
			\\
			A_0 \ar[rrrr]_{\alpha_{\pperp}} & & & & B_0 \ar[rrrr]_{\beta_{\pperp}} & & & & C_0
			}$
		\caption{Weak right cancellation}\label{DiagramCompositeOfDExtension}
	\end{figure}
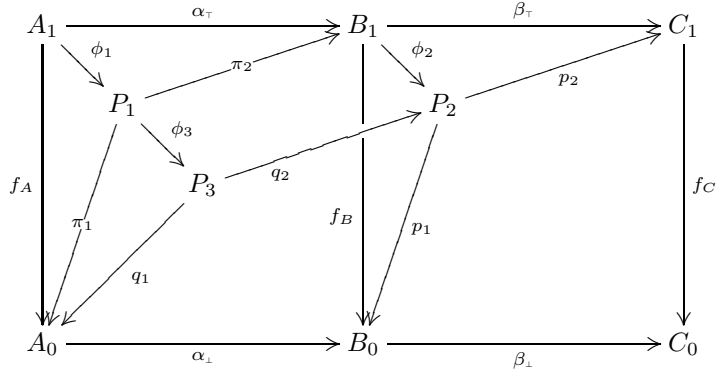
	where $P_1$ is the pullback of $\alpha_{\pperp}$ and $f_B$, $P_2$ is the pullback of $\beta_{\pperp}$ and $f_C$, and $P_3$ is the pullback of $\alpha_{\pperp}$ and the projection $p_1$. Note that since $P_3$ is also a pullback of $\beta_{\pperp}\comp \alpha_{\pperp}$ and~$f_C$, the comparison map (recall Diagram~\ref{EquationDoubleExtension}) of the composite square is $\phi_3\comp \phi_1$---which is an $n$-fold extension. By induction, $\phi_3$ is thus also an $n$-fold extension, and so is the composite $q_2 \comp\phi_3 = \phi_2\comp \pi_2$. We conclude that the comparison map $\phi_2$ of $\beta$ is an $n$-fold extension as well.
\end{proof}

In particular, we deduce that pullbacks along extensions reflect extensions:

\begin{lemma}\label{LemmaPullbacksReflectDExtensions}
	Given a morphism $\alpha = (\alpha_{\ttop},\alpha_{\pperp})$ in $\C_n$, if its pullback along an $(n+1)$-fold extension $\beta$ yields a $(n+1)$-fold extension $\alpha'$, then $\alpha$ is itself a $(n+1)$-fold extension.\noproof
\end{lemma}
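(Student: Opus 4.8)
The plan is to deduce this directly from Lemma~\ref{LemmaWeakRightCancellation}. Write $\beta\colon f_D\to f_B$ for the given $(n+1)$-fold extension and $\alpha\colon f_A\to f_B$ for the morphism in $\C_n$ we wish to recognise as an $(n+1)$-fold extension. Since $\beta$ is an $(n+1)$-fold extension, Lemma~\ref{LemmaHigherExtensionsCalculus}(3) guarantees that the pullback of $\alpha$ along $\beta$ exists in $\C_n$, so that we obtain a pullback square
\[
	\vcenter{\xymatrix@!0@=4em{
	P \pullback \ar[r]^-{\pi} \ar[d]_-{\alpha'} & f_A \ar[d]^-{\alpha} \\
	f_D \ar[r]_-{\beta} & f_B
	}}
\]
in which $\alpha'$ is the $(n+1)$-fold extension we are handed by hypothesis and $\pi$ is the opposite edge.

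First I would note that $\pi$, being a pullback of the $(n+1)$-fold extension $\beta$, is itself an $(n+1)$-fold extension---again by Lemma~\ref{LemmaHigherExtensionsCalculus}(3). In particular $\pi = (\pi_{\ttop},\pi_{\pperp})$ is a commutative square of $n$-fold extensions in $\C_{n-1}$. Next, since $\beta$ and $\alpha'$ are both $(n+1)$-fold extensions, closure under composition (Lemma~\ref{LemmaHigherExtensionsCalculus}(2)) shows that the composite $\beta\comp\alpha'$ is an $(n+1)$-fold extension; as the square above commutes, $\alpha\comp\pi = \beta\comp\alpha'$ is an $(n+1)$-fold extension.

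It then remains to apply Lemma~\ref{LemmaWeakRightCancellation} to the composite $\alpha\comp\pi$, taking $\pi$ as the ``square of $n$-fold extensions'' in $\C_{n-1}$ and $\alpha$ as the outer factor: we conclude that $\alpha$ is an $(n+1)$-fold extension, as wanted. There is no genuine obstacle here, every ingredient being already available; the only point requiring care is the bookkeeping---recognising that the edge $\pi$ of the pullback square is not merely a commutative square of $n$-fold extensions but a bona fide $(n+1)$-fold extension, and that the two factorisations of $\beta\comp\alpha' = \alpha\comp\pi$ allow us to feed the hypotheses of Lemma~\ref{LemmaWeakRightCancellation} with the factors in the correct order.
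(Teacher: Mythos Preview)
Your argument is correct and is exactly the one the paper has in mind: the lemma is stated without proof, immediately after Lemma~\ref{LemmaWeakRightCancellation}, as a direct consequence of it together with the pullback-stability and closure under composition recorded in Lemma~\ref{LemmaHigherExtensionsCalculus}.
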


\subsection{Beyond Barr exactness: effective descent along double extensions}\label{SectionBeyondBarrExactness}
We are now going to explain that extensions have similar convenient exactness properties to regular epimorphisms in Barr exact categories.

Given a Galois structure $\Gamma$ as in Convention~\ref{ConventionGaloisStructure}, $\Gamma$-coverings, which are the key concept of study, are defined as those extensions $c\colon A \to B$ for which there is another extension $e\colon E \to B$ such that the pullback $t$ of $c$ along $e$ is a trivial $\Gamma$-covering. In most references~\cite{BorJan1994,GraRo2004,Jan2008}, $e$ is further required to be of \emph{effective descent} or \emph{effective $\E$-descent} (see~\cite{JanTho1994,JanSoTho2004}). Such extensions are sometimes also called \emph{monadic extensions}~\cite{JanComo1991,Ever2014}. In the contexts of interest for this work, we shall always have that all our extensions are of effective $\E$-descent, which is why we may use our simplified definition of a $\Gamma$-covering. The idea is to ask that ``pulling back along~$e$ is an \emph{algebraic} operation'', which plays a key role in the proof of the Fundamental Theorem of categorical Galois theory---see for instance~\cite[Corollary~5.4]{JanComo1991}.

Given an $(n+1)$-fold extension $e\colon{E \to B}$ in $\C_n$, if we write $\Arr^{n+1}(X)$ for the category of $(n+1)$-fold morphisms with codomain $X$, then there is an induced pair of adjoint functors:
\[
	e_*\colon \Arr^{n+1}(E) \to \Arr^{n+1}(B)
\]
left adjoint to
\[
	e^*\colon\Arr^{n+1}(B) \to \Arr^{n+1}(E),
\]
where $e_*(k \colon X \to E) \coloneq e\comp k$, and $e^*(h\colon X \to B)$ is given by the pullback of $h$ along~$e$. This adjunction also restricts to the categories of $(n+1)$-fold extensions above $E$ and $B$: the similarly defined adjunction
\[
	e_*|\colon \Ext^{n+1}(E) \to \Ext^{n+1}(B)\qquad\dashv\qquad e^*|\colon\Ext^{n+1}(B) \to \Ext^{n+1}(E).
\]
We say that $e$ is \defn{of effective descent} if $e_*$ is monadic, and $e$ is of effective $\E_n$-descent if $e_*|$ is monadic (see~\cite{JanTho1997,JanSoTho2004}).

\begin{remark}\label{RemarkPullbacksWheneverNeeded}
	The only requirement on a category $\C$, for the descent theory developed in~\cite{JanTho1994} to hold, is the existence of pullbacks. This requirement can be weakened to the existence of pullbacks ``whenever needed''. As we are working with higher extensions only, we can indeed compute pullbacks whenever needed (for the developments of~\cite{JanTho1994} which we use), even though we haven't proved the existence of arbitrary pullbacks in the category $\C_n$.
\end{remark}

In this section we show (recall) that higher extensions are of effective descent or effective $\E_n$-descent in $\C_n$. From~\cite[Lemma 3.2]{EGoeVdL} and the above, we have what can be understood as \emph{local $\E_n$--Barr exactness}:

\begin{proposition}\label{PropositionBarrKock}
	Assuming that $\C_0$ is Barr exact, and given a commutative square of $n$-fold extensions $\sigma = (\sigma_{\ttop},\sigma_{\pperp})$, together with the horizontal kernel pairs in $\C_{n-1}$ and the factorisation $f$ between them as in
	\begin{equation}\label{DiagramBarrKock}
		\vcenter{\xymatrix @!0@=5em{
		\Eq(\sigma_{\ttop}) \ar[d]_-{f} \ar@<.5ex>[r]^-{d_{\ttop}} \ar@<-.5ex>[r]_-{c_{\ttop}} & E_{\ttop} \ar[r]^-{\sigma_{\ttop}} \ar[d]^-{f_E} & B_{\ttop} \ar[d]^-{f_B}\\
		\Eq(\sigma_{\pperp}) \ar@<.5ex>[r]^-{d_{\pperp}} \ar@<-.5ex>[r]_-{c_{\pperp}} & E_{\pperp} \ar[r]_-{\sigma_{\pperp}} & \ B_{\pperp}
		}} \qquad \qquad \vcenter{\xymatrix@!0@=3em{
		& \Eq(\sigma_{\ttop}) \pullback \ar[rr]^-{d_{\ttop}} \ar[dd]|(.3){c_{\ttop}}|-{\hole} \ar[ld]_-{f} && E_{\ttop} \ar[dd]^-{\sigma_{\ttop}} \ar[ld]|-{f_E} \\
		\Eq(\sigma_{\pperp}) \pullback \ar[rr]^(.75){d_{\pperp}} \ar[dd]_-{c_{\pperp}} && E_{\pperp} \ar[dd]^(.25){\sigma_{\pperp}} \\
		& E_{\ttop} \ar[ld]|-{f_E} \ar[rr]^(.25){\sigma_{\ttop}}|-{\hole} && B_{\ttop} \ar[ld]^-{f_B} \\
		E_{\pperp} \ar[rr]_-{\sigma_{\pperp}} && B_{\pperp}}}
	\end{equation}
	the morphism $\sigma$ is an $(n+1)$-fold extension if and only if any (hence both) of the two left-hand (commutative) squares (i.e., $(d_{\ttop},d_{\pperp})$ or $(c_{\ttop},c_{\pperp})$) is an $(n+1)$-fold extension. If so, then
	\begin{itemize}
		\item $\sigma$ is the coequaliser, in $\C_n$, of the parallel pair $(d_{\ttop},d_{\pperp})$, $(c_{\ttop},c_{\pperp})\colon f \rightrightarrows f_E$, which is in turn the kernel pair of $\sigma$ in $\C_n$;
		\item the equivalence relation $\Eq(\sigma)$ in $\C_n$ is \emph{stably effective} in the sense of~\cite{JanTho1994};
		\item in particular, $(n+1)$-fold extensions are the coequalisers of their kernel pairs (computed point-wise in $\C_0$), i.e., $(n+1)$-fold extensions are regular epimorphisms in $\C_n$;
		\item all of these constructions (e.g., kernel pairs, coequalisers, etc.) are pointwise in $\C_0$: the exact fork in Diagram~\ref{DiagramBarrKock} consists of $2^{n}$ exact forks in~$\C_0$.
	\end{itemize}
\end{proposition}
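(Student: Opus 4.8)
The plan is to reduce the whole statement to the case $n = 1$, which is \cite[Lemma 3.2]{EGoeVdL}: in the Barr exact $\C_0$, a commutative square $\sigma$ of regular epimorphisms is a double extension if and only if the square induced between the horizontal kernel pairs is a double extension --- for either choice of projections --- and, when this is so, $\sigma$ is the coequaliser of that kernel pair, the associated equivalence relation being stably effective in the sense of \cite{JanTho1994}. For $n = 1$ nothing more is needed once we recall, from Lemma \ref{LemmaHigherExtensionsCalculus} and the Examples preceding it, that in $\C_1 = \Arr(\C_0)$ kernel pairs, coequalisers of kernel pairs and pullbacks along extensions are all computed componentwise; in particular the exact fork of Diagram \ref{DiagramBarrKock} is then literally the pair ($2 = 2^1$) of exact forks of $\C_0$ sitting in the $\ttop$-- and $\pperp$--components.

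We then proceed by induction on $n$. Assuming the Proposition at level $n - 1$, let $\sigma = (\sigma_{\ttop}, \sigma_{\pperp})$ be a commutative square of $n$-fold extensions. By Lemma \ref{LemmaHigherExtensionsCalculus}(3) the horizontal kernel pairs of Diagram \ref{DiagramBarrKock} exist in $\C_{n-1}$ and are computed componentwise, so that the comparison morphism $f$ and the right-hand cube are well defined. The equivalence between ``$\sigma$ is an $(n+1)$-fold extension'' and ``$(d_{\ttop}, d_{\pperp})$ (equivalently $(c_{\ttop}, c_{\pperp})$) is an $(n+1)$-fold extension'' is then obtained by applying \cite[Lemma 3.2]{EGoeVdL}, not in a Barr exact category, but \emph{relatively}, inside $\C_{n-1}$ equipped with the class $\E_n$. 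The facts that this argument requires --- that $n$-fold extensions are regular epimorphisms, and in fact stably the coequalisers of their kernel pairs, the latter computed componentwise (inductive hypothesis); that pullbacks along $(n+1)$-fold extensions exist and are computed componentwise (Lemma \ref{LemmaHigherExtensionsCalculus}); and that the comparison-map condition may be tested one component at a time (Lemmas \ref{LemmaWeakRightCancellation} and \ref{LemmaPullbacksReflectDExtensions}) --- are precisely what Section \ref{Section Calculus} has put in place; this is the content of the slogan ``local $\E_n$-Barr exactness''.

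Suppose now that $\sigma$ is an $(n+1)$-fold extension. The same relative application of \cite[Lemma 3.2]{EGoeVdL} exhibits $\sigma$ as a pushout in $\C_{n-1}$; combining this with the inductive hypothesis, which identifies $\sigma_{\ttop}$ and $\sigma_{\pperp}$ with the coequalisers of $\Eq(\sigma_{\ttop}) \rightrightarrows E_{\ttop}$ and $\Eq(\sigma_{\pperp}) \rightrightarrows E_{\pperp}$ pointwise in $\C_0$, and with the componentwise description of coequalisers of kernel pairs in $\C_n$ recalled above, we conclude that $\sigma$ is the coequaliser in $\C_n$ of $(d_{\ttop}, d_{\pperp})$, $(c_{\ttop}, c_{\pperp}) \colon f \rightrightarrows f_E$ --- which, by construction, is its kernel pair. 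Stable effectiveness of $\Eq(\sigma)$ in $\C_n$ follows because pulling back along an $(n+1)$-fold extension is componentwise, so the property descends in each of the $2^n$ components to $\C_0$, where (being Barr exact) every effective equivalence relation is stably effective. Being a coequaliser of an effective equivalence relation, $\sigma$ is a regular epimorphism in $\C_n$, and unwinding the componentwise computations shows that all the data involved --- kernel pairs, coequalisers, the exact fork of Diagram \ref{DiagramBarrKock} --- is an assembly of $2^n$ such configurations in $\C_0$.

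The step I expect to be delicate is the relative application of \cite[Lemma 3.2]{EGoeVdL}: one has to make sure that its proof goes through verbatim inside $\C_{n-1}$ with $\E_n$, a category which in general is neither Barr exact nor even regular, and is not Mal'tsev --- the only substitutes for those hypotheses being the calculus of Lemmas \ref{LemmaHigherExtensionsCalculus}--\ref{LemmaPullbacksReflectDExtensions} together with the inductive hypothesis. A closely related point requiring care is the claim that an $(n+1)$-fold extension is a genuine pushout in $\C_{n-1}$, used to pin down the coequaliser of its kernel pair as $\sigma$ itself; this again rests on the relative form of \cite[Lemma 3.2]{EGoeVdL}, regular pushouts --- suitably read at this level --- being honest pushouts.
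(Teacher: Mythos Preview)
Your proposal is correct and follows essentially the same route as the paper: apply \cite[Lemma 3.2]{EGoeVdL} relatively in $\C_{n-1}$, using the weak right cancellation of Lemma~\ref{LemmaWeakRightCancellation} in place of the full cancellation available in the Mal'tsev setting, and then reduce the remaining claims to pointwise statements in the Barr exact $\C_0$. The one place where the paper is slightly more explicit than you is the step you flag as delicate---that an $(n+1)$-fold extension is an honest pushout in $\C_{n-1}$---which it handles by citing \cite[Lemma~1.2]{Bou2003} rather than re-running the relative version of \cite[Lemma 3.2]{EGoeVdL}; this gives a cleaner justification for why the componentwise coequaliser really is the coequaliser in $\C_n$.
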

\begin{proof}
	The first part is a direct consequence of~\cite[Lemma 3.2]{EGoeVdL}, whose proof remains valid when using our weaker version of right cancellation, Lemma~\ref{LemmaWeakRightCancellation} above. Since the component-wise coequaliser $\sigma$ of $(d_{\ttop},d_{\pperp})$, $(c_{\ttop},c_{\pperp})\colon f \rightrightarrows f_E$ is in particular a pushout square~\cite[Lemma 1.2]{Bou2003}, it coincides with the coequaliser in $\C_n$. Then $((d_{\ttop},d_{\pperp}),(c_{\ttop},c_{\pperp}))$ is the kernel pair of $\sigma$ since pullbacks along $(n+1)$-fold extensions are computed component-wise. Observe moreover that the parallel pair $(d_{\ttop},d_{\pperp})$, $(c_{\ttop},c_{\pperp})$ consists point-wise in a collection of $2^{n}$ equivalence relations in $\C$, linked by the $(n+1)$-fold extension $(f,f_E)$. Moreover, the $2^{n}$ morphisms which determine $\sigma$ are the coequalisers in $\C_0$ of these $2^{n}$ equivalence relations. Hence this kernel pair $((d_{\ttop},d_{\pperp}),(c_{\ttop},c_{\pperp}))$ is stably effective, because everything is computed point-wise, and all these point-wise equivalence relations are stably effective in the base category~$\C_0$, which is Barr exact.
\end{proof}

In Proposition~\ref{PropositionBarrKock} we are given the whole of Diagram~\ref{DiagramBarrKock}. By working pointwise in~$\C_0$, it is possible to recover the right-hand side of the diagram from the left-hand side (provided the horizontal pair of morphisms consists of $(n+1)$-fold extensions).

\begin{corollary}\label{CorollaryCoequalisersOfPointwiseEquivalenceRelations}
	Given an equivalence relation $\Eq(\sigma)$ in $\C_n$, such that its projections are $(n+1)$-fold extensions and it is point-wise a collection of equivalence relations in~$\C_0$, then its coequaliser exists in $\C_n$, it is an $(n+1)$-fold extension and it is computed point-wise via the coequalisers of each point-wise equivalence relation in~$\C_0$.\noproof
\end{corollary}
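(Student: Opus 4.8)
The plan is to proceed by induction on $n$, using Proposition~\ref{PropositionBarrKock} and Lemma~\ref{LemmaWeakRightCancellation} at each step, and taking the required quotients point-wise in the Barr exact base category $\C_0$. For the base case $n=1$ one is given an equivalence relation in $\C_1=\Ext(\C_0)$, that is, equivalence relations $R_\ttop\rightrightarrows E_\ttop$ and $R_\pperp\rightrightarrows E_\pperp$ in $\C_0$ linked by a double extension; their coequalisers exist in $\C_0$ since $\C_0$ is Barr exact, the induced comparison map between the quotients is a regular epimorphism by the usual cancellation property of regular epimorphisms in a regular category, and Proposition~\ref{PropositionBarrKock} (with $n=1$) then upgrades the resulting square of regular epimorphisms to a double extension precisely because its horizontal kernel pairs — the given equivalence relation — have $2$-fold extension projections.

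For the inductive step, write the object $E$ of $\C_n$ as a morphism $f_E\colon E_\ttop\to E_\pperp$ in $\C_{n-1}$ and decompose the given equivalence relation $\Eq(\sigma)$ accordingly into a top equivalence relation $R_\ttop\rightrightarrows E_\ttop$ in $\C_{n-1}$, a bottom equivalence relation $R_\pperp\rightrightarrows E_\pperp$, and the comparison morphism $f_R\colon R_\ttop\to R_\pperp$ between them. Since the projections of $\Eq(\sigma)$ are $(n+1)$-fold extensions, their top (resp.\ bottom) components are $n$-fold extensions; and since $\Eq(\sigma)$ is point-wise a collection of equivalence relations in $\C_0$, so are $R_\ttop$ and $R_\pperp$. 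Thus the induction hypothesis applies to each: the coequalisers $\sigma_\ttop\colon E_\ttop\to B_\ttop$ and $\sigma_\pperp\colon E_\pperp\to B_\pperp$ exist in $\C_{n-1}$, are $n$-fold extensions, and are computed point-wise in $\C_0$. Let $f_B\colon B_\ttop\to B_\pperp$ be the induced morphism, so that $\sigma\coloneq(\sigma_\ttop,\sigma_\pperp)$ becomes a candidate morphism $f_E\to f_B$ in $\C_n$.

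Next I would show $f_B$ is an $n$-fold extension, hence an object of $\C_n$. Indeed $f_B\comp\sigma_\ttop=\sigma_\pperp\comp f_E$ is a composite of $n$-fold extensions, hence an $n$-fold extension (closure under composition, Lemma~\ref{LemmaHigherExtensionsCalculus}), while $\sigma_\ttop$, being an $n$-fold extension, is in particular a commutative square of $(n-1)$-fold extensions in $\C_{n-2}$; applying Lemma~\ref{LemmaWeakRightCancellation} one level down then yields that $f_B$ is an $n$-fold extension, so that $\sigma=(\sigma_\ttop,\sigma_\pperp)$ is a commutative square of $n$-fold extensions. Now I would feed $\sigma$ into Proposition~\ref{PropositionBarrKock}: its horizontal kernel pairs in $\C_{n-1}$ are exactly $R_\ttop\rightrightarrows E_\ttop$ and $R_\pperp\rightrightarrows E_\pperp$ (because $\C_0$ is Barr exact, the point-wise components of $R_\ttop$ are the kernel pairs of the point-wise components of $\sigma_\ttop$, and the kernel pair of the $n$-fold extension $\sigma_\ttop$ is itself computed point-wise by Lemma~\ref{LemmaHigherExtensionsCalculus}; likewise the comparison factorisation is $f_R$). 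Hence the two left-hand squares of Diagram~\ref{DiagramBarrKock} for $\sigma$ are precisely the projections of $\Eq(\sigma)$, which are $(n+1)$-fold extensions by hypothesis, and Proposition~\ref{PropositionBarrKock} gives at once that $\sigma$ is an $(n+1)$-fold extension, that it is the coequaliser in $\C_n$ of its kernel pair $\Eq(\sigma)\rightrightarrows E$, and that everything is computed point-wise in $\C_0$.

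The main obstacle I anticipate is the verification that the comparison map $f_B$ between the point-wise quotients is again an $n$-fold extension — without this one cannot legitimately invoke Proposition~\ref{PropositionBarrKock} — together with the bookkeeping needed to identify the given equivalence relation with the horizontal kernel pairs appearing in Diagram~\ref{DiagramBarrKock}. Both points ultimately rest on the point-wise nature of all the constructions over the Barr exact $\C_0$ and on the weak right cancellation of Lemma~\ref{LemmaWeakRightCancellation}, which is exactly the place where Mal'tsev-style arguments would normally be used and where some care is therefore required.
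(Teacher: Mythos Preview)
Your argument is correct and matches the paper's own (implicit) reasoning: the corollary is stated with \verb|\noproof|, and the sentence preceding it simply says that ``by working pointwise in~$\C_0$, it is possible to recover the right-hand side of Diagram~\ref{DiagramBarrKock} from the left-hand side,'' after which Proposition~\ref{PropositionBarrKock} does the rest. Your inductive organisation, together with the explicit use of Lemma~\ref{LemmaWeakRightCancellation} to check that the induced $f_B$ is an $n$-fold extension before invoking Proposition~\ref{PropositionBarrKock}, spells out precisely the detail the paper leaves to the reader; no genuinely different idea is involved.
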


\begin{remark}\label{RemarkConsequencesOfBarrKock}
	Since $(n+1)$-fold extensions are regular epimorphisms in $\C_n$ (by Proposition~\ref{PropositionBarrKock}), an $(n+1)$-fold extension which is a monomorphism in $\C_n$ is an isomorphism. Moreover, by Proposition~\ref{PropositionBarrKock} and~\cite[Lemma 1.2]{Bou2003}, an $(n+1)$-fold extension is a pushout square of $n$-fold extensions in $\C_{n-1}$. Hence since pushouts preserve isomorphisms, looking at $\sigma$ in Diagram~\ref{DiagramBarrKock}, we see that if $\sigma_{\ttop}$ is an isomorphism, then so is $\sigma_{\pperp}$.
\end{remark}

Using the previous observation, we obtain:

\begin{lemma}\label{LemmaCompositeOfDoubleExtensionsIsPullback}
	If a composite of two $(n+1)$-fold extensions $\beta \comp \alpha$ yields a pullback square in $\C_{n-1}$, then both $\alpha$ and $\beta$ are pullback squares in $\C_{n-1}$.
\end{lemma}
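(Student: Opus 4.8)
The plan is to recast the claim as a statement about the comparison maps of the two double extensions. Write $\alpha=(\alpha_{\ttop},\alpha_{\pperp})\colon f_A\to f_B$ and $\beta=(\beta_{\ttop},\beta_{\pperp})\colon f_B\to f_C$, with $f_A\colon A_{\ttop}\to A_{\pperp}$, $f_B\colon B_{\ttop}\to B_{\pperp}$ and $f_C\colon C_{\ttop}\to C_{\pperp}$ in $\C_{n-1}$, and let $p\colon A_{\ttop}\to A_{\pperp}\times_{B_{\pperp}}B_{\ttop}$ and $q\colon B_{\ttop}\to B_{\pperp}\times_{C_{\pperp}}C_{\ttop}$ be the comparison maps of $\alpha$ and $\beta$, as in Diagram~\ref{EquationDoubleExtension}. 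Since $\alpha$ and $\beta$ are $\E_n$--double extensions, $p$ and $q$ are $n$-fold extensions; moreover $\alpha$ (respectively, $\beta$) is a pullback square in $\C_{n-1}$ precisely when $p$ (respectively, $q$) is an isomorphism, and the hypothesis that $\beta\comp\alpha$ is a pullback square is exactly the assertion that the comparison map of $\beta\comp\alpha$ is an isomorphism. So the task is to deduce from the latter that both $p$ and $q$ are isomorphisms.

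The first step is to unwind the composite. Put $P=B_{\pperp}\times_{C_{\pperp}}C_{\ttop}$ and let $\rho\colon A_{\pperp}\times_{B_{\pperp}}P\to P$ be the pullback of $\alpha_{\pperp}$ along the projection $P\to B_{\pperp}$; being a pullback of the $n$-fold extension $\alpha_{\pperp}$, this $\rho$ is again an $n$-fold extension by Lemma~\ref{LemmaHigherExtensionsCalculus}. Pasting the pullback square defining $P$ onto the one defining $\rho$ identifies $A_{\pperp}\times_{B_{\pperp}}P$ with $A_{\pperp}\times_{C_{\pperp}}C_{\ttop}$, and a routine diagram chase --- the pasting law for pullbacks together with the naturality of the comparison-map construction --- then shows that, under this identification, the comparison map of $\beta\comp\alpha$ is the composite $\hat q\comp p$, where $\hat q\colon A_{\pperp}\times_{B_{\pperp}}B_{\ttop}\to A_{\pperp}\times_{B_{\pperp}}P$ is the pullback $\rho^{*}(q)$ of $q$ along $\rho$. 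In particular $\hat q$ is itself an $n$-fold extension, again by Lemma~\ref{LemmaHigherExtensionsCalculus}, and the hypothesis on $\beta\comp\alpha$ says exactly that $\hat q\comp p$ is an isomorphism.

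The rest is quick. As an $n$-fold extension, $p$ is a regular epimorphism in $\C_{n-1}$ (Proposition~\ref{PropositionBarrKock}); since $\hat q\comp p$ is an isomorphism, $p$ is also a monomorphism, and a regular epimorphism that is a monomorphism is an isomorphism. Hence $p$ is an isomorphism, that is, $\alpha$ is a pullback square in $\C_{n-1}$; consequently $\hat q=(\hat q\comp p)\comp p^{-1}$ is an isomorphism as well. Finally $\hat q=\rho^{*}(q)$, with $\rho$ an $n$-fold extension and therefore of effective descent (Section~\ref{SectionBeyondBarrExactness}); since pulling back along an effective descent morphism reflects isomorphisms, $q$ is an isomorphism, and so $\beta$ is a pullback square in $\C_{n-1}$.

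I expect the genuinely delicate point to be this last step, the passage from $\hat q$ to $q$: because $\C_{n-1}$ is not a regular category one cannot simply quote the familiar fact that a pullback along a regular epimorphism reflects isomorphisms, and one has to go through the effective descent property of higher extensions recorded in Section~\ref{SectionBeyondBarrExactness} (alternatively, one may argue pointwise in the Barr exact base $\C_0$, using that the pullbacks and isomorphisms in sight are detected componentwise by Lemma~\ref{LemmaHigherExtensionsCalculus} and Proposition~\ref{PropositionBarrKock}, so that at each vertex one is pulling back a regular epimorphism along a regular epimorphism in $\C_0$). The only other point needing care is the bookkeeping behind the factorisation of the comparison map of $\beta\comp\alpha$ as $\hat q\comp p$ with $\hat q$ a pullback of $q$; this is routine, but it is where the chosen orientation of the $n$-cubes is used.
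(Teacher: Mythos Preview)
Your argument is correct and follows the same overall plan as the paper --- factor the comparison map of $\beta\comp\alpha$ as $\hat q\comp p$ with $\hat q$ a pullback of $q$ --- but the two justifications for the final step diverge, and yours has a circularity hazard you should be aware of.

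For the passage from $\hat q$ being an isomorphism to $q$ being one, the paper observes that the pullback square exhibiting $\hat q=\rho^*(q)$ is an $(n+1)$-fold extension (its comparison map is an isomorphism), hence a \emph{pushout} in $\C_{n-1}$ by Remark~\ref{RemarkConsequencesOfBarrKock}; since pushouts preserve isomorphisms, the iso $\hat q$ forces $q$ to be an iso. Your primary route instead invokes effective descent of $\rho$, but Proposition~\ref{PropositionDoubleExtensionsEffectiveDecent} is proved \emph{after} the present lemma and, via Lemma~\ref{LemmaBarrKockForPullbacks}, actually uses it --- so citing effective descent here is circular. Your parenthetical pointwise alternative is fine and breaks the loop: pullbacks along extensions are computed componentwise (Lemma~\ref{LemmaHigherExtensionsCalculus}), isomorphisms in $\C_{n-1}$ are detected componentwise, and in the Barr exact base $\C_0$ every regular epimorphism is of effective descent, so each component of $q$ is an isomorphism. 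Either argument works; the paper's pushout trick is the one that stays entirely within what has already been established.

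A minor remark on the first half: you deduce that $p$ is an isomorphism from $\hat q\comp p$ being one together with $p$ being a regular epimorphism. The paper instead simply notes the elementary categorical fact that if a horizontal composite of two squares is a pullback then the left-hand square is --- no exactness needed. Both are valid, but the paper's version uses less machinery.
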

\begin{proof}
	Remember that if the composite of two squares $\beta \comp \alpha$ is a pullback square, then $\alpha$ has to be a pullback---without any other assumptions on $\C_{n-1}$, $\alpha$ or $\beta$. In the diagram in Figure~\ref{DiagramCompositeOfDExtension}, all morphisms are $n$-fold extensions, since both $\alpha$ and $\beta$ are assumed to be $(n+1)$-fold extensions. Moreover, the square $q_2\comp\phi_3=\phi_2\comp\pi_2$ is a pullback square of regular epimorphisms in $\C_{n-1}$ and thus it is also a pushout~\cite{Carboni-Kelly-Pedicchio}. Finally, both $\phi_3$ and $\phi_1 \comp \phi_3$ are isomorphisms, hence $\phi_1$ is an isomorphism; and since pushouts preserve isomorphisms, also $\phi_2$ is an isomorphism---see Remark~\ref{RemarkConsequencesOfBarrKock}.
\end{proof}

\begin{lemma}\label{LemmaBarrKockForPullbacks}
	Under the hypotheses and notation of Proposition~\ref{PropositionBarrKock}, we see that the right-hand square $\sigma$ is a pullback square of $n$-fold extensions if and only if any (hence both) of the two left-hand (commutative) squares ($(d_{\ttop},d_{\pperp})$ or $(c_{\ttop},c_{\pperp})$) is a pullback square of $n$-fold extensions.
\end{lemma}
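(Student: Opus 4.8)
The plan is to prove two equivalences: that $(d_{\ttop},d_{\pperp})$ is a pullback square of $n$-fold extensions if and only if $(c_{\ttop},c_{\pperp})$ is, and that $\sigma$ is one if and only if $(d_{\ttop},d_{\pperp})$ is. The first is immediate, since interchanging the two projections of the kernel pairs $\Eq(\sigma_{\ttop})$ and $\Eq(\sigma_{\pperp})$ yields an isomorphism of commutative squares $(d_{\ttop},d_{\pperp})\to(c_{\ttop},c_{\pperp})$ (compatible with $f$ and $f_E$), and both ``being a pullback'' and ``having $n$-fold extensions as edges'' are invariant under isomorphism. So it suffices to compare $\sigma$ with $(d_{\ttop},d_{\pperp})$.

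For the implication ``$\sigma$ a pullback $\Rightarrow$ $(d_{\ttop},d_{\pperp})$ a pullback'' I would argue purely formally. If $\sigma$ is a pullback then $\sigma_{\ttop}$ is a pullback of $\sigma_{\pperp}$ along $f_B$; since the kernel pair is a limit it commutes with this pullback, and a routine rearrangement of iterated limits identifies $\Eq(\sigma_{\ttop})$ with the pullback of $d_{\pperp}$ along $f_E$ --- compatibly with $f$ and $d_{\ttop}$ --- which is exactly the assertion that $(d_{\ttop},d_{\pperp})$ is a pullback. (Equivalently: paste $(d_{\ttop},d_{\pperp})$ onto $\sigma$, observe that the composite square, whose horizontal arrows are the common-value maps of the two kernel pairs, is a pullback, and cancel $\sigma$ by the pasting lemma.) Its remaining edge $f$ is then a pullback of the $n$-fold extension $f_E$, hence an $n$-fold extension by Lemma~\ref{LemmaHigherExtensionsCalculus}, so $(d_{\ttop},d_{\pperp})$ is indeed a pullback square of $n$-fold extensions.

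The converse is the substantive part --- a higher-dimensional Barr--Kock theorem. Assume $(d_{\ttop},d_{\pperp})$ is a pullback square of $n$-fold extensions. Being a pullback square with $n$-fold extensions as edges, it is an $(n+1)$-fold extension (its comparison map is an isomorphism, hence an $n$-fold extension); by Proposition~\ref{PropositionBarrKock}, $\sigma$ is an $(n+1)$-fold extension as well, so its comparison map $p\colon E_{\ttop}\to E_{\pperp}\times_{B_{\pperp}}B_{\ttop}$ is an $n$-fold extension. It then suffices to show $p$ is a monomorphism, for by Remark~\ref{RemarkConsequencesOfBarrKock} an $n$-fold extension that is monic is invertible, whence $\sigma$ is a pullback. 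To see $p$ is monic I compute its kernel pair: as $p=\langle f_E,\sigma_{\ttop}\rangle$, the relation $\Eq(p)$ is, seen inside $\Eq(\sigma_{\ttop})$, the inverse image along $f\colon\Eq(\sigma_{\ttop})\to\Eq(\sigma_{\pperp})$ of the diagonal $\Delta\colon E_{\pperp}\to\Eq(\sigma_{\pperp})$. The hypothesis exhibits $f$ as the pullback of $f_E$ along $d_{\pperp}$; pasting that square onto the pullback defining $\Eq(p)$ and using $d_{\pperp}\comp\Delta=1_{E_{\pperp}}$, the pasting lemma displays $\Eq(p)$ as the pullback of $1_{E_{\pperp}}$ along $f_E$ --- that is, as $E_{\ttop}$ itself --- in such a way that the projection $\Eq(p)\to E_{\ttop}$ is an isomorphism. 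Since that projection splits the diagonal $E_{\ttop}\to\Eq(p)$, the diagonal is invertible, $\Eq(p)$ is discrete, and $p$ is monic.

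The delicate point, and the only real obstacle, is legitimising this Barr--Kock-style reasoning inside $\C_{n-1}$, which is neither regular nor exact. Everything it uses --- existence and component-wise computation of the pullbacks and kernel pairs involved, and the fact that a monic $n$-fold extension is invertible --- has, however, been arranged in Section~\ref{Section Calculus} (Lemma~\ref{LemmaHigherExtensionsCalculus}, Proposition~\ref{PropositionBarrKock}, Remark~\ref{RemarkConsequencesOfBarrKock}), all of it ultimately reducing to honest exactness in the Barr exact base $\C_0$. Alternatively one can run the whole argument in $\C_0$ directly, where it is the classical Barr--Kock theorem applied to each of the $2^{\,n-1}$ component squares (every edge of $\sigma$ being a regular epimorphism there), and then transport the conclusion back up, pullbacks, kernel pairs and isomorphisms in $\C_{n-1}$ all being detected pointwise in $\C_0$.
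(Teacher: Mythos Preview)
Your argument is correct, but for the converse direction it takes a genuinely different route from the paper. For ``$\sigma$ pullback $\Rightarrow$ left square pullback'' you and the paper do essentially the same thing: the paper phrases it as a cube-chase (back face is a pullback, right face $=\sigma$ is a pullback, hence the back-then-right composite is; this equals the left-then-front composite, the front face is a pullback, so the left face is), which is exactly your pasting argument.

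For the converse, the paper avoids computing $\Eq(p)$ altogether. It first uses Proposition~\ref{PropositionBarrKock} to know that \emph{all} faces of the cube are $(n+1)$-fold extensions, then argues: if the left face is a pullback, the left-then-front composite is a pullback (front being a kernel-pair square), hence so is the back-then-right composite; now back and right are both $(n+1)$-fold extensions whose composite is a pullback, and Lemma~\ref{LemmaCompositeOfDoubleExtensionsIsPullback} says each factor must then be a pullback---in particular the right face $\sigma$. Your route instead runs a Barr--Kock argument directly: you identify $\Eq(p)$ with $f^{-1}(\Delta_{E_{\pperp}})$, paste this against the hypothesis pullback $(d_{\ttop},d_{\pperp})$, and use $d_{\pperp}\comp\Delta=1$ to exhibit a projection $\Eq(p)\to E_{\ttop}$ as an isomorphism, whence $p$ is monic and therefore invertible. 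This is perfectly sound (and your care about where the needed limits live is justified: everything reduces to kernel pairs of extensions, which exist and are computed pointwise). The trade-off is that the paper's proof is shorter because the real work has been packaged into Lemma~\ref{LemmaCompositeOfDoubleExtensionsIsPullback}---which itself rests on Remark~\ref{RemarkConsequencesOfBarrKock} and the fact that double extensions are pushouts---whereas your argument is more self-contained and closer in spirit to the classical Barr--Kock theorem.
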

\begin{proof}
	Using Proposition~\ref{PropositionBarrKock}, in any case, we may assume that both projections $(d_{\ttop},d_{\pperp})$ and $(c_{\ttop},c_{\pperp})$ as well as $\sigma$ are $(n+1)$-fold extensions. Then looking at the cube on the right-hand side of Diagram~\ref{DiagramBarrKock}, if the right-hand face of the cube is a pullback square, then the composite of the ``back and right'' faces is a pullback, and thus the composite of the ``left and front'' faces is a pullback, and thus also the left-hand face is a pullback. Conversely, if the left-hand face is a pullback, the ``back and right'' composite is a pullback, and by Lemma~\ref{LemmaCompositeOfDoubleExtensionsIsPullback}, also the right-hand face is a pullback.
\end{proof}

Bearing in mind Remark~\ref{RemarkPullbacksWheneverNeeded}, as in~\cite[Remark~4.7]{EGVdL} we easily obtain:
\begin{proposition}\label{PropositionDoubleExtensionsEffectiveDecent}
	Any $(n+1)$-fold extension is of (global) effective descent in~$\C_n$.
\end{proposition}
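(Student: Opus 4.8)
The idea is that, by Proposition~\ref{PropositionBarrKock}, $(n+1)$-fold extensions enjoy in $\C_n$ precisely the exactness properties that regular epimorphisms enjoy in a Barr exact category---they are regular epimorphisms in $\C_n$, coequalisers of their stably effective kernel pairs, stable under pullback, and all of this data is computed point-wise in $\C_0$---and that the classical proof that a regular epimorphism in a Barr exact category is of effective descent (see~\cite{JanTho1994}) uses no more than this. So the plan is simply to transcribe that proof. Fix an $(n+1)$-fold extension $e\colon E\to B$ in $\C_n$. The pullback functor $e^{*}$ is defined by Lemma~\ref{LemmaHigherExtensionsCalculus}, so the adjunction $e_{*}\dashv e^{*}$ makes sense, and establishing effective descent of $e$ amounts, by Beck's monadicity theorem, to showing that $e^{*}$ reflects isomorphisms and that $\Arr^{n+1}(B)$ has, and $e^{*}$ preserves, coequalisers of the parallel pairs whose $e^{*}$-image has a split coequaliser.

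Both conditions I would verify by descending to the point-wise level in $\C_0$. By Lemma~\ref{LemmaHigherExtensionsCalculus} the functor $e^{*}$ is computed component-wise, hence point-wise in $\C_0$, where $e$ is a family of $2^{n}$ regular epimorphisms (Proposition~\ref{PropositionBarrKock}), each of which is of effective descent since $\C_0$ is Barr exact. As isomorphisms in $\C_n$ are detected point-wise (Remark~\ref{RemarkConsequencesOfBarrKock}) and regular epimorphisms of the regular category $\C_0$ reflect isomorphisms under pullback, $e^{*}$ reflects isomorphisms. For the coequaliser condition, a parallel pair in $\Arr^{n+1}(B)$ whose $e^{*}$-image admits a split coequaliser is, point-wise, a compatible family of equivalence relations of $\C_0$ whose projections are $(n+1)$-fold extensions---here Proposition~\ref{PropositionBarrKock} is applied, in the configuration of Diagram~\ref{DiagramBarrKock}. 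By Corollary~\ref{CorollaryCoequalisersOfPointwiseEquivalenceRelations} the coequaliser of such a pair then exists in $\C_n$, is computed point-wise, and is again an $(n+1)$-fold extension; and since the point-wise equivalence relations of $\C_0$ are \emph{stably} effective, this coequaliser is preserved under pullback along $e$. This yields the monadicity conditions, and hence effective descent of $e$.

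The only genuine subtlety is foundational rather than mathematical: neither $\C_n$ nor the arrow categories over it are known to be complete or cocomplete, so the descent theory of~\cite{JanTho1994} cannot be invoked as a black box. This is exactly the point addressed by Remark~\ref{RemarkPullbacksWheneverNeeded}---every pullback occurring in the descent argument for $e$ is a pullback along an $(n+1)$-fold extension and hence exists by Lemma~\ref{LemmaHigherExtensionsCalculus}, and every colimit occurring is a coequaliser of a point-wise family of equivalence relations and hence exists by Corollary~\ref{CorollaryCoequalisersOfPointwiseEquivalenceRelations}. Once these existence questions are settled, the argument is a routine transcription of the Barr exact case, carried out one level of the tower at a time, exactly in the spirit of~\cite[Remark~4.7]{EGVdL}.
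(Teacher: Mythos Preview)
Your overall strategy—reduce to the point-wise Barr exact situation in $\C_0$ and invoke the classical effective-descent argument—is the same as the paper's, and your treatment of the foundational issue (Remark~\ref{RemarkPullbacksWheneverNeeded}) is exactly right. But there is a genuine gap in the coequaliser step.

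You assert that ``a parallel pair in $\Arr^{n+1}(B)$ whose $e^{*}$-image admits a split coequaliser is, point-wise, a compatible family of equivalence relations of $\C_0$ whose projections are $(n+1)$-fold extensions''. This is not true for an \emph{arbitrary} $e^{*}$-split pair: nothing in the definition of a split coequaliser forces the pair to be reflexive, symmetric, or to have extension legs, and Proposition~\ref{PropositionBarrKock} (which concerns kernel pairs of given extensions) does not supply this. Without that structure you cannot invoke Corollary~\ref{CorollaryCoequalisersOfPointwiseEquivalenceRelations} to produce the coequaliser in~$\C_n$, so the Beck argument stalls. The paper sidesteps the problem by using the characterisation of effective descent in terms of \emph{discrete fibrations of equivalence relations} over $\Eq(\sigma)$ (\cite[Theorem~3.7]{JanSoTho2004}); these are, by definition, internal equivalence relations whose projections are pullbacks of the kernel-pair projections of~$\sigma$, hence $(n+1)$-fold extensions, so Corollary~\ref{CorollaryCoequalisersOfPointwiseEquivalenceRelations} and Lemma~\ref{LemmaBarrKockForPullbacks} apply directly. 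In effect, the discrete-fibration criterion isolates precisely those $e^{*}$-split pairs that arise from descent data, and those \emph{do} have the equivalence-relation shape you need. Your proof becomes correct if you replace the raw Beck criterion by this characterisation—which is what the paper does.
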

\begin{proof}
	Note that the result holds for $n=1$. By induction, assume that it holds for all $n < m$ for some fixed $m >2$. Let $\sigma\colon f_E \to f_B$ be an $(m+1)$-fold extension. The monadicity of $\sigma_*$ in each component $\sigma^{\ttop}_*$ and $\sigma^{\pperp}_*$ easily yields the monadicity of $\sigma_*$ itself. For instance, we may use the characterisation in terms of \emph{discrete fibrations}\footnote{These are not to be confused with the discrete fibrations between extensions which form the subject of Section~\ref{Section Discrete Fibrations Double}.} of equivalence relations from Theorem~3.7 in~\cite{JanSoTho2004}.

	Consider a discrete fibration of equivalence relations $f_R \colon R_{\ttop} \to R_{\pperp}$ above the kernel pair $\Eq(\sigma)\colon\Eq(\sigma_{\ttop}) \to \Eq(\sigma_{\pperp})$ of $\sigma = (\sigma_{\ttop},\sigma_{\pperp})$ as in the commutative diagram of plain arrows below---see also Diagram~\ref{DiagramBarrKock}. This means that the squares of first and second projections are pullbacks. Then observe that $f_R$ is computed component-wise and consists in a comparison map between a pair of discrete fibrations of equivalence relations $R_{\ttop}$ and $R_{\pperp}$, above the pair of kernel pairs $\Eq(\sigma_{\ttop})$ and $\Eq(\sigma_{\pperp})$ with comparison map $\Eq(\sigma) \colon\Eq(\sigma_{\ttop}) \to \Eq(\sigma_{\pperp})$. By induction, it is computed point-wise, and consists in a comparison $(n+2)$-fold extension between a collection of $2^{n+1}$ discrete fibrations above the kernel pairs of the point-wise collection of $2^{n+1}$ morphisms defining $\sigma$. The projections of the equivalence relation~$f_R$ are also $(n+1)$-fold extensions, as pullbacks of the projections of $\Eq(\sigma)$---which are themselves $(n+1)$-fold extensions by Proposition~\ref{PropositionBarrKock}. Hence we may build the square on the right below, first by taking the coequaliser $\gamma = (\gamma_{\ttop},\gamma_{\pperp})$ of $f_R$ using Corollary~\ref{CorollaryCoequalisersOfPointwiseEquivalenceRelations}. The factorisation $(\bar{\beta}_{\ttop}, \bar{\beta}_{\pperp})$ is then obtained by the universal property of $\gamma$.
	\[
		\xymatrix @!0@=4em {
		f_R \opsplitpullback \ar[d]_{\hat{\beta}} \ar@<.5ex>[r] \ar@<-.5ex>[r] & f_C \ar@{{}{--}{>}}[r]^{(\gamma_{\ttop},\gamma_{\pperp})} \ar[d]_{\beta} & f_D \ar@{{}{..}{>}}[d]^{(\bar{\beta}_{\ttop},\bar{\beta}_{\pperp})}\\
		\Eq(\sigma) \ar@<.5ex>[r] \ar@<-.5ex>[r] & f_E \ar[r]_{\sigma} & \ f_B
		}
	\]
	By Proposition~\ref{PropositionBarrKock} and Corollary~\ref{CorollaryCoequalisersOfPointwiseEquivalenceRelations}, $f_R$ is the kernel pair of $\gamma$, which is an $(n+1)$-fold extension. Furthermore, the square on the right is a pullback square by Lemma~\ref{LemmaBarrKockForPullbacks}. Finally, $\gamma$ is pullback-stable as a coequaliser, since everything is computed point-wise: its kernel pair $f_R$ is stably effective by Proposition~\ref{PropositionBarrKock}.
\end{proof}

What exactly we need in our context is not \emph{global} effective descent but effective $\E_n$-decent. This derives from Proposition~\ref{PropositionDoubleExtensionsEffectiveDecent} because of Lemma~\ref{LemmaPullbacksReflectDExtensions}, as explained in~\cite[Section 2.7]{JanTho1994}.

\begin{corollary}
	Any $(n+1)$-fold extension is of effective $\E_{n+1}$-descent in $\C_{n}$.\noproof
\end{corollary}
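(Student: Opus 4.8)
The plan is to deduce the statement from Proposition~\ref{PropositionDoubleExtensionsEffectiveDecent} by restricting monadicity along the inclusions $\Ext^{n+1}\hookrightarrow\Arr^{n+1}$, exactly as in~\cite[Section~2.7]{JanTho1994}. Fix an $(n+1)$-fold extension $e\colon E\to B$ in $\C_n$. By Proposition~\ref{PropositionDoubleExtensionsEffectiveDecent} the functor $e_*\colon\Arr^{n+1}(E)\to\Arr^{n+1}(B)$ is monadic; write $\mathbb{T}=e^*e_*$ for the induced monad on $\Arr^{n+1}(E)$ and $\Phi\colon\Arr^{n+1}(B)\xrightarrow{\ \sim\ }(\Arr^{n+1}(E))^{\mathbb{T}}$ for the comparison equivalence, which sends $h\colon X\to B$ to the pullback $e^*(h)$ equipped with its canonical descent structure. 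All the pullbacks involved are pullbacks along $e$ or along its kernel pair projections, hence along $(n+1)$-fold extensions, so they exist by Lemma~\ref{LemmaHigherExtensionsCalculus}(3) and the descent machinery of~\cite{JanTho1994} applies, cf.\ Remark~\ref{RemarkPullbacksWheneverNeeded}.

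First I would record that the monad $\mathbb{T}$ restricts to a monad $\mathbb{T}|$ on $\Ext^{n+1}(E)$: the restriction $e_*|\colon\Ext^{n+1}(E)\to\Ext^{n+1}(B)$ is well defined because a composite of $(n+1)$-fold extensions is one (Lemma~\ref{LemmaHigherExtensionsCalculus}(2)), and $e^*|$ because a pullback of an $(n+1)$-fold extension along~$e$ is one (Lemma~\ref{LemmaHigherExtensionsCalculus}(3)); this is the restricted adjunction $e_*|\dashv e^*|$ already noted in the text. Since $\Ext^{n+1}(E)$ is \emph{full} in $\Arr^{n+1}(E)$, the Eilenberg--Moore category $(\Ext^{n+1}(E))^{\mathbb{T}|}$ is precisely the full subcategory of $(\Arr^{n+1}(E))^{\mathbb{T}}$ spanned by those $\mathbb{T}$-algebras whose underlying object lies in $\E_{n+1}$.

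The key step is then to chase an object $h$ of $\Arr^{n+1}(B)$ through $\Phi$: its image is a $\mathbb{T}$-algebra with underlying object $e^*(h)$. If $h$ is an $(n+1)$-fold extension, then so is $e^*(h)$ by pullback-stability; conversely, if $e^*(h)$ is an $(n+1)$-fold extension, then so is $h$ by Lemma~\ref{LemmaPullbacksReflectDExtensions}, since pullbacks along extensions reflect extensions. Hence $\Phi$ carries $\Ext^{n+1}(B)$ onto $(\Ext^{n+1}(E))^{\mathbb{T}|}$ and, being fully faithful, restricts to an equivalence between them; as this restriction is the comparison functor of the adjunction $e_*|\dashv e^*|$, we conclude that $e_*|$ is monadic, i.e.\ $e$ is of effective $\E_{n+1}$-descent in $\C_n$. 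The only genuinely non-formal ingredient is the reflection property of Lemma~\ref{LemmaPullbacksReflectDExtensions}, which is exactly what yields essential surjectivity onto the ``extension part'' of the descent category; everything else is the standard transfer of effective descent to a pullback-stable, pullback-reflected class of morphisms, so no real obstacle is expected.
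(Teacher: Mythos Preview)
Your proposal is correct and follows precisely the route the paper indicates: deduce effective $\E_{n+1}$-descent from global effective descent (Proposition~\ref{PropositionDoubleExtensionsEffectiveDecent}) via the transfer argument of \cite[Section~2.7]{JanTho1994}, the essential point being Lemma~\ref{LemmaPullbacksReflectDExtensions}. You have simply spelled out the details the paper leaves implicit in its one-line justification preceding the corollary.
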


\section{Some properties of strongly Birkhoff Galois theories}\label{Intermediate results}

In this section we collect some intermediate results which, as explained in Subsection~\ref{Subsection Tower}, are meant to simplify the process of showing that, in certain specific situations, there is a tower of strongly Birkhoff Galois structures above a given strongly Birkhoff Galois structure.

\subsection{The Birkhoff condition in higher dimensions}\label{SubsectionClosureByQuotientsAndBirkhoff}
First we work towards Proposition~\ref{PropositionBirkhoffIffClosedUnderQuotient}, which is a higher-dimensional version of the equivalence between the Birkhoff condition (as in Subsection~\ref{Subsection Categorical Galois Theory}) and closedness under quotients of the class of coverings.

\begin{lemma}\label{LemmaTrivDoublePushoutsReduceToPushouts}
	For some $n\geq 1$, we consider in $\C_n$ a commutative diagram of $(n+1)$-fold extensions
	\begin{equation}\label{Diagram z}
		\vcenter{\xymatrix@!0@=4em{
		A \ar[r]^-{z_A} \ar[d]_-{\phi} & \bar{A} \ar[d]^-{\bar{\phi}}\\
		B \ar[r]_-{z_B} & \bar{B}.}}
	\end{equation}
	We require of $z_B\colon B \to \bar{B}$ that all of its $\C_0$-components---see Subsection~\ref{SubsectionHigherMorphisms}---are identity morphisms, with the eventual exception of the initial component denoted $s_B\colon{B_{\meet}\to \bar B_{\meet}}$.

	This square~\eqref{Diagram z} is a pushout in $\C_n$ if and only if the induced square in Figure~\ref{DiagramTopPushout}
	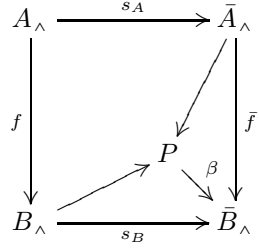
\begin{figure}
		$\xymatrix @!0@=2.5em{
			A_{\meet} \ar[rrr]^-{s_A} \ar[ddd]_{f} & & & \bar{A}_{\meet} \ar[ddl] \ar[ddd]^{\bar{f}} \\
			\\
			& & P \ar[rd]^-{\beta} & \\
			B_{\meet} \ar[rru] \ar[rrr]_-{s_B} & & & \bar{B}_{\meet}
			}$
		\caption{Induced pushout of initial objects}\label{DiagramTopPushout}
	\end{figure}
	between the initial objects of $A$, $\bar{A}$, $B$ and $\bar{B}$ is a pushout in $\C_0$. In other words, the comparison $\beta$ from the pushout $P\coloneq B_\meet +_{A_\meet}\bar{A}_\meet$ to $\bar{B}_\meet$ is an isomorphism.
\end{lemma}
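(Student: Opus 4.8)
The plan is to identify, inside $\C_n$, the pushout of the span $B\ot A\to\bar A$ (that is, of $\phi$ and $z_A$) and to compare it with the cocone $\bar B$ exhibited by~\eqref{Diagram z}. Since all four morphisms of~\eqref{Diagram z} are $(n+1)$-fold extensions, Proposition~\ref{PropositionBarrKock} tells us that they are regular epimorphisms in $\C_n$ whose $\C_0$-components (one at each vertex of the underlying $n$-cube) are regular epimorphisms in $\C_0$, and that at this level kernel pairs and coequalisers are computed componentwise in $\C_0$. Using this---concretely, the inductive description of coequalisers recalled in the Examples following Lemma~\ref{LemmaHigherExtensionsCalculus}, together with Corollary~\ref{CorollaryCoequalisersOfPointwiseEquivalenceRelations}---one sees that the pushout of $\phi$ and $z_A$ in $\C_n$ is computed componentwise: at each vertex $\sigma$ it is the pushout in $\C_0$ of $\phi_\sigma$ and $(z_A)_\sigma$, which exists for every $\sigma$ as soon as $P = B_\meet +_{A_\meet}\bar A_\meet$ exists in $\C_0$ (this holds in all the examples we care about, and more generally whenever the relevant colimits are available in $\C_0$).

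Next I would observe that this componentwise pushout is trivial away from the initial vertex. For $\sigma\neq\meet$ the assumption on $z_B$ gives $(z_B)_\sigma=\mathrm{id}$, so the face of~\eqref{Diagram z} over $\sigma$ reduces to the identity $\phi_\sigma=\bar\phi_\sigma\comp(z_A)_\sigma$; as $(z_A)_\sigma$ is epic, a one-line chase shows that this square is \emph{already} a pushout in $\C_0$, so the componentwise pushout agrees with $\bar B$ there. Over $\meet$ it is, by definition, $P$. Hence the pushout of $\phi$ and $z_A$ in $\C_n$ is the $n$-cube $\tilde B$ obtained from $B$ by replacing $B_\meet$ with $P$ (with the induced edges), and the comparison morphism $\tilde B\to\bar B$ coming from the cocone~\eqref{Diagram z} is an identity at every vertex save $\meet$, where it is exactly $\beta\colon P\to\bar B_\meet$ from Figure~\ref{DiagramTopPushout}. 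So~\eqref{Diagram z} is a pushout in $\C_n$ if and only if this comparison is an isomorphism of $\C_n$; and since a morphism of $\C_n$ is invertible exactly when all of its $\C_0$-components are, and ours is an identity off $\meet$, this reduces to $\beta$ being an isomorphism in $\C_0$---equivalently, to the square of Figure~\ref{DiagramTopPushout} being a pushout there. (For the ``if'' part one can instead simply remark that, when the initial square is a pushout, the componentwise pushout \emph{is} $\bar B$, which lies in $\C_n$ by hypothesis.)

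The part deserving real care---and what I expect to be the main obstacle---is the assertion that the pushout above is genuinely formed in $\C_n$ componentwise in $\C_0$; equivalently, that the $n$-cube $\tilde B$ is an extension and not merely a diagram in $\C_0$. This is precisely where the ``local Barr exactness'' of Section~\ref{Section Calculus} is needed: Proposition~\ref{PropositionBarrKock} and Corollary~\ref{CorollaryCoequalisersOfPointwiseEquivalenceRelations} to run the coequaliser computation componentwise, and the componentwise computation of pullbacks along higher extensions from Lemma~\ref{LemmaHigherExtensionsCalculus} to keep the relevant comparison maps under control. A minor, purely bookkeeping point is to check that the edges of $\tilde B$ out of $P$ are well defined---each arises from the universal property of $P$ applied to the compatible pair formed by an edge of $B$ out of $B_\meet$ and the matching composite $\bar A_\meet\to\bar A_\tau\xrightarrow{\bar\phi_\tau}\bar B_\tau=B_\tau$, compatibility being a short chase with naturality of $z_A$, $\phi$, $\bar\phi$ and the identities $\phi_\sigma=\bar\phi_\sigma\comp(z_A)_\sigma$ valid at the non-initial vertices.
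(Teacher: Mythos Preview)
Your overall strategy coincides with the paper's: both arguments hinge on the $n$-cube $\tilde B$ (the paper calls it $\bar P$) obtained from $\bar B$ by replacing the initial vertex with $P$ and precomposing the outgoing edges with $\beta$, and both need the fact that this cube is an $n$-fold extension. For the ``if'' direction the paper takes a slightly more direct route than yours---it simply verifies the universal property of the cocone $\bar B$ by hand, building the factorisation $c\colon\bar B\to C$ componentwise (using the pushout property of $\bar B_\meet\cong P$ at the initial vertex and the components of $b$ elsewhere) and checking the remaining commutativities by precomposing with the epimorphisms $s_B$ and $z_A$. Your route via $\tilde B$ also works, since in that case $\tilde B=\bar B$ lies in $\C_n$ by hypothesis and $\C_n$ is full in $\Arr^n(\C_0)$.

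The point you correctly flag as the main obstacle---that $\tilde B$ is genuinely an $n$-fold extension, needed for the ``only if'' direction---is exactly where the paper departs from your proposal. It does \emph{not} use the local Barr-exactness machinery of Section~\ref{Section Calculus}; instead it cites \cite[Proposition~1.16]{EGoeVdL}, which says in effect that precomposing all morphisms out of the initial vertex of an $n$-fold extension with a regular epimorphism again yields an $n$-fold extension. The tools you point to (Proposition~\ref{PropositionBarrKock} and Corollary~\ref{CorollaryCoequalisersOfPointwiseEquivalenceRelations}) concern coequalisers of pointwise equivalence relations with extension projections, and recasting the pushout of $\phi$ and $z_A$ in that form---taking the image of $\Eq(z_A)$ under $\phi\times\phi$ and checking it is a pointwise equivalence relation whose projections are extensions---is possible but is extra work you have not carried out. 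Once $\bar P\in\C_n$ is granted, the paper finishes exactly as you do: the induced cocone $(\bar\phi',z'_B)$ on $\bar P$ and the universal property of the pushout $\bar B$ produce a morphism $\bar B\to\bar P$ whose initial component inverts $\beta$.
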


Note that, in the statement above, if similarly all of the $\C_0$-components of $z_A$ are identity morphisms in $\C_0$, but for the initial component, then the square~\eqref{Diagram z} is an $(n+2)$-fold extension by Lemma~\ref{LemmaHigherExtensionsCalculus}.

\begin{proof}[Proof of Lemma~\ref{LemmaTrivDoublePushoutsReduceToPushouts}]
	If $\beta$ in Figure~\ref{DiagramTopPushout} is an isomorphism, then testing for the universal property of Diagram~\ref{Diagram z}, we consider the following commutative diagram of plain arrows.
	\begin{equation*}\label{DiagramTestPushout}
		\vcenter{\xymatrix@!0@=3.5em{
		A \ar[r]^-{z_A} \ar[d]_-{\phi} & \bar{A} \ar@/^/[ddr]^-{a} \ar[d]_-{\bar{\phi}} & \\
		B \ar[r]^-{z_B} \ar@/_/[drr]_-{b} & \bar{B} \ar@{{}{--}{>}}[dr]^(0.4){c} & \\
		& & C
		}}
	\end{equation*}
	The morphism $c$ consists of the $\C_0$-components of $b$ in all components but the initial one. There it is the morphism induced by the pushout $\bar{B}_{\meet}$ from Figure~\ref{DiagramTopPushout}. This is of course only possible because all the $\C_0$-components of $z_B$, but the initial one, are identity morphisms. To test the remaining commutativity conditions that make this collection of $2^n$ morphisms in $\C_0$ into a morphism in $\C_n$, it suffices to pre-compose the underlying $(n+1)$-cubical diagram of $c$ with the epimorphism $s_B$. To show that $c\comp \bar{\phi} = a$, it suffices to pre-compose in $\C_n$ with the epimorphism $z_A$.

	Conversely, if Diagram~\ref{Diagram z} is a pushout, then we want to show that $\beta$ in Figure~\ref{DiagramTopPushout} is an isomorphism. We consider the $n$-cubical diagram $\bar{P}$, obtained from~$\bar{B}$ by pre-composing all its morphisms of domain~$\bar{B}_{\meet}$ with $\beta$. The initial object of $\bar{P}$ is then $P$. By~\cite[Proposition 1.16]{EGoeVdL}, $\bar{P}$ is still an $n$-fold extension. Similarly, we have induced morphisms $z'_B \colon B \to \bar{P}$ and $\bar{\phi}' \colon \bar{A} \to \bar{P}$ defining a commutative square $\bar{\phi}'\comp z_A = z'_B\comp \phi$ in $\C_n$. Hence the universal property of the pushout~\eqref{Diagram z} induces a morphism $\bar{B} \to \bar{P}$ whose initial component is an inverse for $\beta$.
\end{proof}

In order to state the next result, we assume that we are given a partially defined tower of Galois structures as in Subsection~\ref{Subsection Tower}. For some fixed $n\geq 1$, for each $k\leq n$ we have a Galois structure $\Gamma_k$ which is strongly $\E_{k+1}$-Birkhoff, where $\C_k$ is the category of $k$-fold extensions over $\C_0$ and $\B_k$ is the category $\Cov(\C_{k-1})$ of $\Gamma_{k-1}$-coverings. Note that by~\cite[Corollary~5.2]{Im-Kelly}, at each level $F_k\dashv \I_k$, for any object $A$ in $\C_k$, all of the $\C_0$-components of the unit morphism $\eta^k_A\colon A\to \I_kF_k(A)$ are identity morphisms---with the possible exception of the initial component. The following result is a generalisation of Proposition~3.1 in~\cite{JK1994} which covers the base case $n=0$.

\begin{proposition}\label{PropositionBirkhoffIffClosedUnderQuotient}
	Suppose that $\B_{n+1}$ is reflective in $\C_{n+1}$ with reflector $F_{n+1}$. The adjunction $F_{n+1}\dashv\I_{n+1}$ is $\E_{n+2}$-Birkhoff if and only if $\B_{n+1}$ is closed under quotients along $(n+2)$-fold extensions in $\C_{n+1}$.
\end{proposition}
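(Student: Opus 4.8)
This is the $(n+1)$-dimensional analogue of~\cite[Proposition~3.1]{JK1994}, and the plan is to transpose that proof to the category $\C_{n+1}$, where $(n+2)$-fold extensions play the role that regular epimorphisms play in an exact category: they are the regular epimorphisms of $\C_{n+1}$ that matter, and they carry the local exactness recorded in Proposition~\ref{PropositionBarrKock}. Two further ingredients make this work. First, by~\cite[Corollary~5.2]{Im-Kelly} the units of $F_{n+1}$ have all their $\C_0$-components equal to identities apart from the initial ones (we use throughout that these units are moreover $(n+2)$-fold extensions, i.e.\ that their initial $\C_0$-components are regular epimorphisms). Second, Lemma~\ref{LemmaTrivDoublePushoutsReduceToPushouts} lets us reduce the one pushout computation we need in $\C_{n+1}$ to a pushout in the Barr exact category~$\C_0$.

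For the ``only if'' direction, suppose $F_{n+1}\dashv\I_{n+1}$ is $\E_{n+2}$-Birkhoff and let $q\colon A\to B$ be an $(n+2)$-fold extension with $A$ in $\B_{n+1}$. Then $\eta^{n+1}_A$ is an isomorphism, so in the reflection square at $q$---which is a pushout in $\C_{n+1}$ by hypothesis---the arrow $\eta^{n+1}_B$, being a pushout of an isomorphism, is an isomorphism too; hence $B\cong\I_{n+1}F_{n+1}(B)$ lies in the replete subcategory $\B_{n+1}$.

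For the ``if'' direction, assume $\B_{n+1}$ is closed under quotients along $(n+2)$-fold extensions and let $f\colon A\to B$ be an $(n+2)$-fold extension; write $\bar{A}\coloneq\I_{n+1}F_{n+1}(A)$, $\bar{B}\coloneq\I_{n+1}F_{n+1}(B)$ and $\bar{f}\coloneq\I_{n+1}F_{n+1}(f)$. Since the units $\eta^{n+1}_A$, $\eta^{n+1}_B$ are identity on every $\C_0$-component but the initial ones $s_A\colon A_\meet\to\bar{A}_\meet$, $s_B\colon B_\meet\to\bar{B}_\meet$, Lemma~\ref{LemmaTrivDoublePushoutsReduceToPushouts} applies to the reflection square at $f$: it is a pushout in $\C_{n+1}$ if and only if the comparison $\beta\colon P\coloneq B_\meet+_{A_\meet}\bar{A}_\meet\to\bar{B}_\meet$ between the initial objects is an isomorphism in $\C_0$. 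I would then argue as in the classical proof. Form the pushout $\Pi$ of $f$ along $\eta^{n+1}_A$ in $\C_{n+1}$; it exists, has $P$ as its initial object and is an $(n+1)$-fold extension---by the ``$\bar{P}$'' construction appearing in the proof of Lemma~\ref{LemmaTrivDoublePushoutsReduceToPushouts} together with~\cite[Proposition~1.16]{EGoeVdL}---and comes with coprojections $z\colon\bar{A}\to\Pi$, $w\colon B\to\Pi$ and a comparison $\beta'\colon\Pi\to\bar{B}$ such that $\beta'\comp w=\eta^{n+1}_B$, $\beta'\comp z=\bar{f}$ and $\beta'_\meet=\beta$. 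Here $w$ is again identity on all $\C_0$-components but the initial one, which is the pushout of the regular epimorphism $s_A$ along $f_\meet$ and hence a regular epimorphism, so $w$ is an $(n+2)$-fold extension by Lemma~\ref{LemmaHigherExtensionsCalculus}; therefore $z\comp\eta^{n+1}_A=w\comp f$ is an $(n+2)$-fold extension, and since $\eta^{n+1}_A$ is a commutative square of $(n+1)$-fold extensions, the weak right-cancellation Lemma~\ref{LemmaWeakRightCancellation} shows $z$ to be an $(n+2)$-fold extension as well. As $\bar{A}\in\B_{n+1}$, closedness under quotients now gives $\Pi\in\B_{n+1}$, and the universal property of $\eta^{n+1}_B$ factors $w$ uniquely as $g\comp\eta^{n+1}_B$ with $g\colon\bar{B}\to\Pi$. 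From $\beta'\comp g\comp\eta^{n+1}_B=\beta'\comp w=\eta^{n+1}_B$ we get $\beta'\comp g=\mathrm{id}_{\bar{B}}$ by uniqueness, and from $g\comp\beta'\comp w=g\comp\eta^{n+1}_B=w$ we get $g\comp\beta'=\mathrm{id}_{\Pi}$ because $w$, being an extension, is an epimorphism (Proposition~\ref{PropositionBarrKock}). Thus $\beta'$---and with it $\beta=\beta'_\meet$---is an isomorphism, so by Lemma~\ref{LemmaTrivDoublePushoutsReduceToPushouts} the reflection square at $f$ is a pushout; that is, $F_{n+1}\dashv\I_{n+1}$ is $\E_{n+2}$-Birkhoff.

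The step I expect to be the real obstacle is making this pushout argument survive the passage to $\C_{n+1}$, a category which is neither exact nor cocomplete: one must know that the auxiliary pushout $\Pi$ exists and, more delicately, that the coprojection $z\colon\bar{A}\to\Pi$ is a genuine $(n+2)$-fold extension---and not just a regular epimorphism of $\C_{n+1}$, these two notions being distinct away from the Mal'tsev case---before closedness under quotients can be invoked. This is precisely where the ``identity-except-initial'' shape of the units from~\cite[Corollary~5.2]{Im-Kelly} is decisive: it confines the whole construction to the initial vertex of the $(n+1)$-cube, so that the existence of $\Pi$ reduces to a pushout of regular epimorphisms in the Barr exact $\C_0$ (via~\cite[Proposition~1.16]{EGoeVdL}), while the extension property of $z$ becomes a formal consequence of Lemma~\ref{LemmaWeakRightCancellation} and the local $\E_{n+2}$-exactness of $\C_{n+1}$ given by Proposition~\ref{PropositionBarrKock}. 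Everything else is a routine transcription of the proof of~\cite[Proposition~3.1]{JK1994}.
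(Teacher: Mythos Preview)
Your proof is correct and follows essentially the same route as the paper: construct the pushout of $f$ and $\eta^{n+1}_A$ via the ``$\bar P$'' construction from Lemma~\ref{LemmaTrivDoublePushoutsReduceToPushouts} and~\cite[Proposition~1.16]{EGoeVdL}, observe that the coprojection $z\colon \bar A\to\Pi$ is an $(n+2)$-fold extension, invoke closedness under quotients to place $\Pi$ in $\B_{n+1}$, and use the universal property of $\eta^{n+1}_B$ to invert the comparison map. The paper compresses the middle step into the single assertion that ``all arrows in the diagram are $(n+1)$-fold extensions''; your explicit derivation of this for $z$ via the weak right-cancellation Lemma~\ref{LemmaWeakRightCancellation} (after first checking that $w$ is an $(n+2)$-fold extension through Lemma~\ref{LemmaHigherExtensionsCalculus}) is exactly the justification the paper leaves implicit, and your closing paragraph correctly isolates this as the one non-routine point in lifting~\cite[Proposition~3.1]{JK1994} to $\C_{n+1}$.
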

\begin{proof}
	$(\Rightarrow)$ is a direct consequence of the fact that isomorphisms are stable under pushouts. For the proof of $(\Leftarrow)$, we use that the pushout $P$ of $f$ and $\eta_A^n$ in the diagram in Figure~\ref{DiagramUnitPushout}
	\begin{figure}
		$\xymatrix @!0@=2.5em{
			A \ar[rrr]^-{\eta^n_A} \ar[ddd]_{f} & & & F_n(A) \ar@{-->}[ddl]_-{\bar{f}} \ar[ddd]^-{F_n(f)} \\
			\\
			& & P \ar@{.>}[rd]^-{\beta} & \\
			B \ar@{-->}[rru]^-{\bar\eta^n_A} \ar[rrr]_-{\eta^n_B} & & & F_n(B)
			}$
		\caption{Reflection square and induced pushout}\label{DiagramUnitPushout}
	\end{figure}
	exists and is constructed as in the second part of the proof of Lemma~\ref{LemmaTrivDoublePushoutsReduceToPushouts}. All arrows in the diagram are $(n+1)$-fold extensions. Hence, in particular, $P$ is the quotient of $F_n(A)$ along the $(n+1)$-fold extension $\bar f$, so that it is a covering by our assumption. The universal property of $F_n(B)$ now provides an inverse for $\beta$.
\end{proof}

Now we prove that the second condition in the statement of Proposition~\ref{PropositionBirkhoffIffClosedUnderQuotient} always holds, so that the adjunction is automatically Birkhoff: see Corollary~\ref{BirkhoffConclusion} below.

\subsection{Closedness under quotients along double extensions}\label{SubsectionStronglyBirkhoffAndCoveringsClosedAlongExtensions}

Let $\Gamma$ be a Galois structure as in Convention~\ref{ConventionGaloisStructure} and let $\X$ satisfy the strong $\E$-Birkhoff condition. As before, we have the inclusion $\Ii \colon \Cov(\Y) \to \Ext(\Y)$, and we know that the pullback, computed in $\Y$, of a covering along an extension yields a covering. Coverings are thus preserved by pullbacks along extensions.

Likewise, coverings are also reflected by pullbacks along extensions. Indeed, given a pullback square of extensions as on the left in Figure~\ref{Coverings Reflected}, if $f_A$ is a covering, then by assumption there is $p$ such that the projection $\pi_E$ of the pullback on the right in Figure~\ref{Coverings Reflected} is a trivial covering, and this shows that $f_B$ is split by the composite~$\phi_0\comp p$.
\begin{figure}
	$\xymatrix@!0@=1.3em{
		E\times_{A_0} A_1 \pullback \ar[rrrr]^-{\pi_{A_1}} \ar[dddd]_-{\pi_E} & & & & A_1 \pullback \ar[rrrr]^-{\phi_1} \ar[dddd]_-{f_A} & & & & B_1 \ar[dddd]^-{f_B} \\
		\\
		\\
		\\
		E \ar[rrrr]_-{p} & & & & A_0 \ar[rrrr]_-{\phi_0} & & & & B_0
		}$
	\caption{Coverings reflected by pullbacks along extensions}\label{Coverings Reflected}
\end{figure}

This result implies that $\Cov(\Y)$ is closed under quotients along $\E$--double extensions in $\Ext(\Y)$:

\begin{lemma}\label{FactorisingTrivialExtensions}
	If $\X$ is strongly $\mathcal{E}$-Birkhoff in $\Y$, then given extensions $t$ and~$s$ such that $f_A=s\comp t$ is a trivial covering, the extensions $t$ and $s$ are trivial coverings as well.
\end{lemma}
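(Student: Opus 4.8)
The plan is to decompose the $L$-reflection square at $f_A=s\comp t$ as a vertical composite of the reflection squares at $t$ and at $s$, and then to read off the conclusion from the cancellation property of double extensions recorded in Lemma~\ref{LemmaCompositeOfDoubleExtensionsIsPullback}.

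First I would record the decomposition. Since $L$ is a functor and $\eta$ is a natural transformation, the two naturality squares
\[
	\vcenter{\xymatrix@!0@=4em{
	T \ar[r]^-{\eta_T} \ar[d]_-{t} & L(T) \ar[d]^-{L(t)} \\
	E \ar[r]_-{\eta_E} & L(E)
	}}
	\qquad\text{and}\qquad
	\vcenter{\xymatrix@!0@=4em{
	E \ar[r]^-{\eta_E} \ar[d]_-{s} & L(E) \ar[d]^-{L(s)} \\
	B \ar[r]_-{\eta_B} & L(B)
	}}
\]
paste, along their common edge $\eta_E$, into the reflection square at $s\comp t=f_A$ (here one uses that $L(s)\comp L(t)=L(f_A)$). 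By the strong $\E$-Birkhoff hypothesis, each of these two squares is an $\E$--double extension; recall in this connection that the image under $L$ of an extension is again an extension (axiom~(2) of Convention~\ref{ConventionGaloisStructure}), so that $L(t)$ and $L(s)$ are extensions as required. On the other hand, since $f_A$ is a trivial covering, the pasted square---the reflection square at $f_A$---is a pullback.

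It then suffices to invoke Lemma~\ref{LemmaCompositeOfDoubleExtensionsIsPullback}: a composite of two $\E$--double extensions which happens to be a pullback square has both of its factors pullback squares. Applied to the composite above, this yields that the reflection squares at $t$ and at $s$ are both pullbacks, which is to say that $t$ and $s$ are trivial coverings. The only genuine content lies in this last step, and it is precisely here that the \emph{strong} form of the Birkhoff condition (rather than plain $\E$-Birkhoff) is indispensable: mere pushout squares do not cancel in this way, so knowing only that the reflection squares are pushouts would not be enough. The double-extension structure on both factors---which itself rests on the local exactness results of Section~\ref{Section Calculus} (regular pushouts and the Barr--Kock property)---is exactly what makes the cancellation go through; this is also the same mechanism which, in the inductive tower, will fail for normal and trivial coverings, as anticipated in the introduction.
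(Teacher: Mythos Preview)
Your proof is correct and follows exactly the paper's own argument: decompose the reflection square at $f_A$ as the pasting of the reflection squares at $t$ and at $s$ (which are double extensions by the strong $\E$-Birkhoff hypothesis), and then apply Lemma~\ref{LemmaCompositeOfDoubleExtensionsIsPullback}. The only cosmetic difference is that the paper draws the diagram with $t$, $s$ horizontal and the units vertical, whereas you transpose this; since the double-extension property is symmetric in the square, this makes no difference.
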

\begin{proof}
	This is an immediate consequence of Lemma~\ref{LemmaCompositeOfDoubleExtensionsIsPullback}, applied to the diagram in Figure~\ref{Figure Composing Unit Squares}
	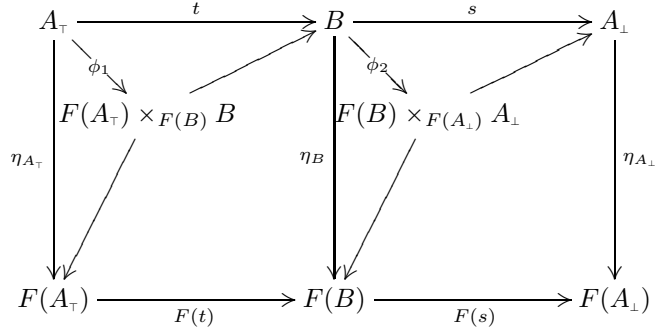
\begin{figure}
		$\xymatrix@!0@=3.5em{
			A_{\ttop} \ar[rrr]^-{t} \ar[rd]|-{\phi_1} \ar[ddd]_{\eta_{A_{\ttop}}} & & & B \ar[rd]|-{\phi_2} \ar[ddd]_-{\eta_{B}} \ar[rrr]^-{s} & & & A_{\pperp} \ar[ddd]^-{\eta_{A_{\pperp}}}\\
			& F(A_{\ttop}) \times_{F(B)}B \ar[rru] \ar[ldd]& & & F(B) \times_{F(A_{\pperp})}{A_{\pperp}} \ar[rru] \ar[ldd]\\
			\\
			F(A_{\ttop}) \ar[rrr]_{F(t)} & & & F(B) \ar[rrr]_{F(s)} & & & F(A_{\pperp})
			}$
		\caption{Composing unit squares}\label{Figure Composing Unit Squares}
	\end{figure}
	where, by the strong $\E$-Birkhoff condition, both squares are double extensions.
\end{proof}

Pullback-stability of extensions implies:

\begin{lemma}\label{FactorisingCentralExtensions}
	If $\X$ is strongly $\mathcal{E}$-Birkhoff in $\Y$, then given extensions $t$ and~$s$ such that $f_A=s\comp t$ is a covering, the extensions $t$ and $s$ are coverings as well.\noproof
\end{lemma}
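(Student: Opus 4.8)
The plan is to deduce this from the trivial-covering case, Lemma~\ref{FactorisingTrivialExtensions}, by pulling the factorisation $f_A=s\comp t$ back along an extension that splits the covering $f_A\colon A_{\ttop}\to A_{\pperp}$.

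Since $f_A$ is a covering, I would first fix an extension $p\colon E\to A_{\pperp}$ splitting it, so that the pullback $\pi_E\colon E\times_{A_{\pperp}}A_{\ttop}\to E$ of $f_A$ along $p$ is a trivial covering, and then pull the whole factorisation $A_{\ttop}\xrightarrow{t}B\xrightarrow{s}A_{\pperp}$ back along $p$. By pullback-stability of extensions (axiom~(3) of Convention~\ref{ConventionGaloisStructure}), this yields, besides $\pi_E$, an extension $q\colon E\times_{A_{\pperp}}B\to E$ (the pullback of $s$ along $p$) and an extension $\rho\colon E\times_{A_{\pperp}}B\to B$ (the pullback of $p$ along $s$), together with a comparison map $u\colon E\times_{A_{\pperp}}A_{\ttop}\to E\times_{A_{\pperp}}B$ satisfying $q\comp u=\pi_E$. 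By pullback pasting, the square with top edge $u$ and bottom edge $t$ is a pullback, so $u$ is the pullback of $t$ along the extension $\rho$; in particular $u$ is again an extension.

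It then suffices to apply Lemma~\ref{FactorisingTrivialExtensions} to the factorisation $\pi_E=q\comp u$ of the trivial covering $\pi_E$ into the two extensions $u$ and $q$, which gives that both $u$ and $q$ are trivial coverings. Since $q$ is the pullback of $s$ along the extension $p$, the extension $p$ splits $s$, so $s$ is a covering; and since $u$ is the pullback of $t$ along the extension $\rho$, the extension $\rho$ splits $t$, so $t$ is a covering. I do not expect a genuine obstacle here beyond the bookkeeping; the one point requiring care is to check that \emph{every} arrow appearing in the pulled-back diagram (in particular $\rho$, $q$ and $u$) is again an extension, so that the notion of being ``split by'' makes sense and Lemma~\ref{FactorisingTrivialExtensions} can be invoked — and this is exactly the place where the pullback-stability of extensions is used.
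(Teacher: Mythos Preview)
Your argument is correct and matches the paper's intended proof: the paper merely writes ``Pullback-stability of extensions implies:'' before stating the lemma without proof, and your pullback of the factorisation $f_A=s\comp t$ along a splitting extension, followed by an application of Lemma~\ref{FactorisingTrivialExtensions}, is precisely the argument being alluded to.
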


The fact that in this context, coverings are reflected by pullbacks along extensions, now gives us:

\begin{proposition}\label{CentralExtensionsAreStableAlongDoubleExtensions}
	If $\X$ is strongly $\mathcal{E}$-Birkhoff in $\Y$, then coverings are stable under quotients along double extensions.\noproof
\end{proposition}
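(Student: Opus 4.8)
The plan is to deduce the result directly from two facts established just above: that in the strongly $\E$-Birkhoff setting coverings are reflected by pullbacks along extensions, and from Lemma~\ref{FactorisingCentralExtensions}.

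Suppose we are given a double extension in $\Y$
\[
\xymatrix@!0@=3em{
A \ar[r]^-{z_A} \ar[d]_-{c} & A' \ar[d]^-{c'} \\
B \ar[r]_-{z_B} & B'
}
\]
regarded as a morphism $(z_A,z_B)\colon c\to c'$ of $\Ext(\Y)$ whose domain $c$ is a covering; the goal is that the codomain $c'$ is a covering. First I would take the pullback of $c'$ along the extension $z_B$, with projections $\pi_B\colon B\times_{B'}A'\to B$ (a pullback of $c'$) and $\pi_{A'}\colon B\times_{B'}A'\to A'$ (a pullback of $z_B$); by Convention~\ref{ConventionGaloisStructure} this pullback exists and $\pi_B$, $\pi_{A'}$ lie in $\E$. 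Since $(z_A,z_B)$ is a double extension, its comparison map $p\colon A\to B\times_{B'}A'$ is an extension, and by construction $\pi_B\comp p=c$. Thus $c=\pi_B\comp p$ is a composite of two extensions which happens to be a covering, so Lemma~\ref{FactorisingCentralExtensions} tells us that $\pi_B$ is a covering. Finally, $\pi_B$ is the pullback of the extension $c'$ along the extension $z_B$; since coverings are reflected by pullbacks along extensions, $c'$ is a covering.

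Because the argument reduces entirely to the reflection property of coverings and to Lemma~\ref{FactorisingCentralExtensions}, I do not expect a genuine obstacle. The only points needing care are the routine bookkeeping---verifying that $\pi_B$, $\pi_{A'}$ and the comparison map $p$ all belong to $\E$, and noticing that the comparison map of the double extension factorises $c$ through the pullback of $c'$ along $z_B$---both of which are immediate from Convention~\ref{ConventionGaloisStructure} and the definition of a double extension.
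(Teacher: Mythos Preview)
Your argument is correct and is precisely the proof the paper has in mind: the sentence immediately preceding the proposition (``The fact that in this context, coverings are reflected by pullbacks along extensions, now gives us:'') together with the \verb|\noproof| marker indicates that the intended reasoning combines reflection of coverings along pullbacks with Lemma~\ref{FactorisingCentralExtensions}, exactly as you spell out.
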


\begin{corollary}\label{BirkhoffConclusion}
	Under the hypotheses of Proposition~\ref{PropositionBirkhoffIffClosedUnderQuotient}, the Galois structure $\Gamma_{n+1}$ is $\E_{n+2}$-Birkhoff.\noproof
\end{corollary}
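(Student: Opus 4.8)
The plan is simply to combine Proposition~\ref{CentralExtensionsAreStableAlongDoubleExtensions} with Proposition~\ref{PropositionBirkhoffIffClosedUnderQuotient}, both of which are already at our disposal. First I would instantiate Proposition~\ref{CentralExtensionsAreStableAlongDoubleExtensions} at the $n$-th level of the tower: take $\Y\coloneq\C_n$ and $\X\coloneq\B_n=\Cov(\C_{n-1})$, with chosen class of extensions $\E_{n+1}$. The hypothesis of that proposition---that $\X$ is strongly $\E_{n+1}$-Birkhoff in $\Y$---is exactly the assumption that $\Gamma_n$ is strongly $\E_{n+1}$-Birkhoff, which is part of the data of the partially defined tower, valid for the index $k=n\leq n$. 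Its conclusion is that the $\Gamma_n$-coverings are stable under quotients along double extensions in $\Ext(\C_n)$. Unwinding the identifications $\Cov(\C_n)=\B_{n+1}$, $\Ext(\C_n)=\C_{n+1}$, and ``double extension in $\C_{n+1}$'' $=$ ``$(n+2)$-fold extension'' (this last point being the indexing convention of Subsection~\ref{Subsection Ext and higher}), this says precisely that $\B_{n+1}$ is closed under quotients along $(n+2)$-fold extensions in $\C_{n+1}$.

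Second, I would feed this into Proposition~\ref{PropositionBirkhoffIffClosedUnderQuotient}. Its remaining hypothesis---that $\B_{n+1}$ is reflective in $\C_{n+1}$ with reflector $F_{n+1}$---is among the standing hypotheses of that proposition, which we are assuming. The equivalence stated there then converts the closure-under-quotients statement we just obtained into the assertion that the adjunction $F_{n+1}\dashv\I_{n+1}$ is $\E_{n+2}$-Birkhoff, which is the meaning of ``$\Gamma_{n+1}$ is $\E_{n+2}$-Birkhoff''.

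There is no genuine obstacle left at this stage: all of the substance has already been absorbed into Lemma~\ref{LemmaCompositeOfDoubleExtensionsIsPullback} (through Lemma~\ref{FactorisingTrivialExtensions}), Lemma~\ref{FactorisingCentralExtensions}, the reflection of coverings by pullbacks along extensions, and the pushout/Barr--Kock argument underlying Proposition~\ref{PropositionBirkhoffIffClosedUnderQuotient}. The only point that deserves a moment of care is pure bookkeeping, namely verifying that the level-$n$ instance of ``$\X$ strongly $\E$-Birkhoff in $\Y$'' really does match the tower datum $\Gamma_n$, and that the class of extensions there is $\E_{n+1}$---so that \emph{its} double extensions are exactly the members of $\E_{n+2}$, as required to match the statement of Proposition~\ref{PropositionBirkhoffIffClosedUnderQuotient}.
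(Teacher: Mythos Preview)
Your argument is correct and is exactly the one the paper intends: apply Proposition~\ref{CentralExtensionsAreStableAlongDoubleExtensions} at level $n$ (with $\Y=\C_n$, $\X=\B_n$, $\E=\E_{n+1}$) to obtain closure of $\B_{n+1}=\Cov(\C_n)$ under quotients along $(n+2)$-fold extensions, and then feed this into the implication $(\Leftarrow)$ of Proposition~\ref{PropositionBirkhoffIffClosedUnderQuotient}. The only slip is terminological: in your bookkeeping you should say ``$\E_{n+1}$-double extension in $\C_n$'' (equivalently, ``extension in $\C_{n+1}$'') rather than ``double extension in $\C_{n+1}$'', but the identification with $(n+2)$-fold extensions is the right one.
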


\subsection{Towers of Galois structures in algebra}\label{SubsectionAlgebraicExamples}

In order for the above to yield a tower of strongly Birkhoff Galois structures in concrete examples, two ingredients are still missing: first of all, the reflectiveness of coverings at each step, and secondly, a condition that ensures that reflection squares are extensions rather than just pushouts. Here we consider a class of examples where those two conditions are met.

In~\cite{Ever2014}, T.~Everaert develops a theory of towers of Galois structures in a more restrictive context where all categories involved share the Mal'tsev property. In that context, any base Galois structure which is Birkhoff automatically gives rise to an appropriate tower. In the article, an explicit construction for each reflector~$F_n$ is given, which relies on the fact that in that context, coverings are always split by themselves---in other words, they coincide with normal coverings. In the present, more general context, the construction of a reflection at each level requires a systematic method for obtaining splittings for coverings.

Assuming for instance that $\B_0$ is a subvariety of a variety of algebras $\C_0$, let us write the free/forgetful adjunction to $\Set$ as $F_a\dashv \U$. Then every covering $c\colon A \to B$ is split by the canonical $\E_1$-projective presentation ${\epsilon^a_{B} \colon F_a \U (B) \to B}$ of its codomain $B$, which provides the needed ``systematic splitting''. Here, the Galois structure $\Gamma_0$ is always $\E_1$-Birkhoff: reflection squares are pushouts. (Note that this universal-algebraic situation is precisely where originally the term ``Birkhoff'' came from.) The \emph{strong} Birkhoff condition, which says that the reflection squares are extensions, then (by Proposition~5.4 in~\cite{Carboni-Kelly-Pedicchio}) amounts to the condition that the kernel pair of each unit morphism $\eta_A$ permutes (in the sense of composition of relations) with any congruence on the object~$A$.

The idea of the following construction (which is a variation on the one in~\cite{Ever2014,DuEvMo2017}) generalises to arbitrary levels in the process of building a tower of Galois structures. Note that we are not claiming that the result is new---see for instance~\cite{JK:Reflectiveness}; rather, we wish to sketch a concrete construction of an adjoint. In fact, we shall only give a detailed description of this construction at the base level; assuming that we have $\Gamma_n$ and are constructing the reflector $F_{n+1}$, the argument is essentially the same, mainly because for each object~$A$ of $\C_n$, all the $\C_0$-components of $\eta_A^n$ are identities, except for the initial one.

\begin{proposition}\label{PropReflctOfCentralExtensions}
	Adding that $\C_0$ is a category with enough projectives to the hypotheses of Proposition~\ref{PropositionBirkhoffIffClosedUnderQuotient} (and also considering the case $n=0$), we find that the category $\B_{n+1} = \Cov(\C_n)$ is reflective in $\C_{n+1}=  \Ext(\C_n)$.
\end{proposition}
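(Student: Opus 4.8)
The plan is to reduce to the base case $n=0$ and there construct the left adjoint $F_{n+1}$ by hand, in the spirit of~\cite{JK:Reflectiveness,Ever2014,DuEvMo2017}. For the reduction I would record two facts. First, by~\cite[Corollary~5.2]{Im-Kelly} all $\C_0$-components of each unit $\eta^k$ are identities except possibly the initial one, so that ``everything happens on initial objects'', exactly as already exploited in Lemma~\ref{LemmaTrivDoublePushoutsReduceToPushouts} and Proposition~\ref{PropositionBirkhoffIffClosedUnderQuotient}. Secondly, $\C_n$ has enough projectives with respect to $(n+1)$-fold extensions: starting from a projective presentation in $\C_0$ of the terminal vertex of an $n$-cube $B$, then a projective presentation of the appropriate pullback, and proceeding through the $2^n$ vertices, one assembles an $(n+1)$-fold extension $p\colon P\to B$ whose domain $P$ is projective with respect to $(n+1)$-fold extensions (each lifting problem being solved vertex by vertex, using that $\E_{n+1}$-extensions are built from regular epimorphisms through comparison maps); and by Proposition~\ref{PropositionDoubleExtensionsEffectiveDecent} and its corollary such a $p$ is of effective $\E_{n+1}$-descent. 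Granting these, the base-level argument below applies verbatim at every level, so I only describe $n=0$.

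So let $c\colon A\to B$ be an extension of $\C_0$ and $p\colon P\to B$ a projective presentation of $B$. The first key observation is that \emph{$p$ splits every covering over $B$}: if $d$ is a covering split by $e\colon E\to B'$ and $g\colon B\to B'$ is arbitrary, then $g\comp p$ lifts through the extension $e$ because $P$ is projective, and since $e^{*}d$ is a pullback of a primitive covering, so is $(g\comp p)^{*}d$; taking $g=\mathrm{id}_{B}$ shows every covering over $B$ is split by $p$. Now pull $c$ back along $p$ to get $\bar{c}\coloneq p^{*}c\colon \bar{A}\to P$, which carries the canonical descent data relative to the effective-descent morphism $p$, and form its trivialisation $t\coloneq \Triv(\bar{c})\colon T\to P$ with unit $u\colon \bar{c}\to t$ in $\Ext(P)$ (this exists by admissibility of $\Gamma_{0}$). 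Because $\Gamma_{0}$ is strongly $\E_{1}$-Birkhoff, the reflection square of $\bar c$ is a double extension, so its comparison map $u$ is a regular epimorphism. I would then transport the canonical descent data along $u$ to descent data on $t$ and descend $(t,\text{its descent data})$ along $p$ to an extension $F_{1}(c)\colon F_{1}(A)\to B$ over $B$. By construction $p^{*}(F_{1}(c))\cong t$ is a trivial covering, so $F_{1}(c)$ is split by $p$ and hence a covering; and $u$, being a morphism of descent data by naturality of the unit of $\Triv$, descends to a morphism $\eta_{c}\colon c\to F_{1}(c)$ over $B$, which I take as the unit of the adjunction.

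For the universal property, let $d\colon D\to B'$ be a covering and $\phi=(\phi_{1},\phi_{0})\colon c\to d$ a morphism in $\C_{1}$; I must produce a unique $\psi\colon F_{1}(c)\to d$ with $\psi\comp \eta_{c}=\phi$, necessarily over $\phi_{0}$. Pulling $\phi$ back along $p$ turns it into a morphism $\bar{c}\to (\phi_{0}\comp p)^{*}d$ over $P$, and by the first observation $(\phi_{0}\comp p)^{*}d$ is a trivial covering over $P$; so by the universal property of $t=\Triv(\bar{c})$ this morphism factors uniquely through $u$. Uniqueness in that universal property, applied over the kernel pair of $p$, shows that the resulting morphism respects descent data, so it descends to the desired $\psi$; and the identity $\psi\comp \eta_{c}=\phi$, as well as the uniqueness of $\psi$, follow because pulling back along the regular epimorphism $p$ is faithful enough to cancel it.

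The hard part will be the two descent steps: transporting descent data onto the trivialisation $t$, and checking that the factorisations above are morphisms of descent data. This is precisely where the strong Birkhoff hypothesis (which makes the unit $u$ a regular epimorphism and thus lets one transport structure \emph{along} it) and the descent calculus of Section~\ref{SectionBeyondBarrExactness} are needed, and it is delicate because --- unlike coverings --- trivial coverings are not reflected by pullbacks along extensions, so one cannot read off the descended object's being a covering from its pullback to $P$; it is the splitting of $F_{1}(c)$ by $p$, not a reflection property, that does the work. At higher levels the same proof applies once the component-wise projective presentations and the effective descent of $(n+1)$-fold extensions are in place, together with~\cite[Corollary~5.2]{Im-Kelly}.
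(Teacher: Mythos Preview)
Your plan and the key ingredients---projective presentations at each level, the observation that a projective presentation $p$ splits every covering over its codomain, the reduction to $n=0$ via~\cite[Corollary~5.2]{Im-Kelly}---all match the paper's. The assembly, however, diverges precisely where you flag ``the hard part'', and there the gap is real. The paper does \emph{not} transport descent data to $t$ and then descend. It trivialises the entire kernel-pair diagram---both $\pi_E = p^{*}f$ and the induced $\hat f\colon \Eq(\pi_A)\to \Eq(p)$---obtaining a parallel pair $(\pi_1,\pi_2)$, and then simply takes its coequaliser $r$ in $\C$ to define $F_1(f)\colon C_f\to B$. It never claims that $p^{*}F_1(f)\cong t$: the front-right square is only shown to be a \emph{double extension} (via Lemma~\ref{LemmaWeakRightCancellation}), and that suffices to conclude that $F_1(f)$ is a covering by Proposition~\ref{CentralExtensionsAreStableAlongDoubleExtensions}. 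The universal property then comes from the top-right square being a \emph{pushout} (the trivialisation units are epimorphisms), not from descent. The paper even remarks parenthetically that the squares in the factorisation become pullbacks only when $p$ is a weakly universal \emph{normal} covering, which suggests that for a bare projective presentation in a non-Mal'tsev setting the identification $p^{*}F_1(c)\cong t$ your argument relies on may simply fail.

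Your route requires the trivialised pair to constitute genuine descent data on $t$---equivalently, that $\Triv$ commute with pullback along the kernel-pair projections of $p$---and you give no argument for this; nor is it clear how to push an algebra structure for the descent monad forward along the regular epimorphism $u$ in general. The same issue recurs in your universal-property argument, where you need the factorisation $t\to (\phi_0 p)^{*}d$ to be a morphism of descent data before you can descend it. So while the strategy is close in spirit, the coequaliser/pushout argument together with Proposition~\ref{CentralExtensionsAreStableAlongDoubleExtensions} is what actually carries the proof and bypasses the descent-compatibility question entirely.
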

\begin{proof}
	As usual, it suffices that we build the unit of the adjunction, and show its universal property. A reflector is then uniquely determined. As explained in~\cite{DuEvMo2017}, coverings are pullback-stable, and thus by Proposition~5.8 in~\cite{Im-Kelly}, we may work locally above each object $B$ and build a left adjoint to the inclusion of the category $\Cov(B)$ of coverings above $B$ into the category $\Ext(B)$ of extensions above~$B$.

	We give the details for $n=0$. We fix $B$ together with a projective presentation $p\colon E\to B$ of $B$. We consider an extension $f\colon A\to B$ and the induced pullback
	\[
		\xymatrix@!0@=5em{
		\Eq(\pi_A) \opsplitpullback \ar@{-->}[d]_{\hat{f}} \ar@<.5ex>[r] \ar@<-.5ex>[r] & E \times_{B} A \pullback \ar[r]^-{\pi_A} \ar[d]_{\pi_E} & A \ar[d]^{f}\\
		\Eq(p) \ar@<.5ex>[r] \ar@<-.5ex>[r] & E \ar[r]_{p} & B
		}
	\]
	where we take the kernel pairs of $\pi_A$ and $p$ and construct the factorisation $\hat{f}$. Note that on higher levels, projective presentations can be obtained from those at the base level, while the constructions in the diagram above are the same, thanks to the properties of higher extensions discussed in Section~\ref{Section Calculus}.

	The strongly Birkhoff Galois structure $\Gamma_0$ yields a trivialisation functor which we apply to the double equivalence relation on the left, which yields a pair of arrows $(\pi_1, \pi_2)$ as in Figure~\ref{Construction of the reflector}.
	\begin{figure}
		$\xymatrix@!0@=3.5em{
			& \Eq(\pi_A) \opsplitpullback\ar[dl]_{\tau} \ar[ddd]|(.32){\hole}|(.35){\hole}^(.6){\hat{f}} \ar@<.5ex>[rrr] \ar@<-.5ex>[rrr] & & & E \times_{B} A \pullback \ar[dl]_{\tau} \ar[rrr]^{\pi_A} \ar[ddd]|(.33){\hole}^(.6){\pi_E} & & & A \ar@{{}{..}{>}}[dl]_{\eta_{A}^1} \ar[ddd]^{f}\\
			\cdot \ar[rdd]_-{\Triv(\hat{f})} \ar@<.5ex>[rrr]^(.66){\pi_1} \ar@<-.5ex>[rrr]_(.66){\pi_2} & & & \cdot \ar[rdd]_-{\Triv(\pi_E)} \ar[rrr]^(.66){r} & & & C_f \ar@{{}{..}{>}}[rdd]_-{F_1(f)} \\
			\\
			& \Eq(p) \ar@<.5ex>[rrr] \ar@<-.5ex>[rrr] & & & E \ar[rrr]_{p} & & & B
			}$
		\caption{Construction of the reflector $F_1$}\label{Construction of the reflector}
	\end{figure}
	As usual, the adjunction unit $\tau$ is only non-trivial in its initial component. We may then take the coequaliser $r$ of $\pi_1$ and $\pi_2$, which yields the dotted factorisation of~$f$ in the diagram above. On higher levels, the construction of this coequaliser~$r$ is still possible thanks to the shape of the unit morphisms involved. All the thus obtained morphisms will indeed be higher extensions. Lemma~\ref{LemmaWeakRightCancellation} tells us that the front right commutative square $F_1(f)\comp r=p\comp \Triv(\pi_E)$ is a double extension. By closedness of coverings under quotients along double extensions, $F_1(f)$ is a covering. Moreover, the top square on the right is a pushout since the~$\tau$ are epimorphisms. On higher levels we here use Lemma~\ref{LemmaTrivDoublePushoutsReduceToPushouts}. (We may further notice here that if instead of being a projective presentation, the morphism $p$ is a weakly universal normal covering, then all of the squares in the factorisation are pullbacks.)

	We define $\eta^1_f\colon f\to F_1(f)$ to be the couple $(\eta_A^1,1_B)$ and prove that it satisfies the universal property of a unit. As mentioned above, we only need to check this in $\Ext(B)$. We consider another factorisation $c\comp t =f$ as in Figure~\ref{Figure Universal Property}, where $c$ is a covering. By definition, there is an extension $q\colon Q\to B$ such that the pullback $c'$ of $c$ along $q$ is a trivial covering. Since $p$ is projective, there is a factorisation $s$ such that $p=q\comp s$. Then, the pullback $c''$ of $c$ along $p$ is also the pullback of the trivial covering $c'$ along $s$. Hence $c''$ is a trivial covering. We have the diagram in Figure~\ref{Figure Universal Property}
	\begin{figure}
		$\xymatrix@!0@=4em {
			& E \times_{B} A \pullback \ar@{{}{--}{>}}[rdd]^(.3){h} \ar[dl]_{\tau} \ar[rrr]^{\pi_A} \ar[ddd]|(.33){\hole}|(.75){\pi_E} & & & A \ar[dl]_{\eta_{A}^1} \ar[ddd]^(.75){f} \ar[rdd]^{t} & \\
			\cdot \ar@{{}{..}{>}}[rrd]|(.35){h'} \ar[rdd]_-{\Triv(\pi_E)} \ar[rrr]^(.75){r} & & & C_f \ar[rdd]|<<<<<<<{F_1(f)} \ar@{.>}[rrd]^(.66)x & & \\
			& & E\times_BT \skewpullback\ar[ld]^{c''} \ar[rrr]|(.5){\hole}|(.66){\hole} & & & T \ar[ld]^{c} \\
			& E \ar[rrr]_{p} & & & B
			}$
		\caption{Checking the universal property of $F_1(f)$}\label{Figure Universal Property}
	\end{figure}
	where $h=1_E\times_Bt$ is induced by the universal property of the pullback $E\times_BT$ and $h'$ is induced by the universal property of the trivialisation $\tau$. All these morphisms are regular epimorphisms and all the displayed diagrams commute. Hence, since we showed that the top square is a pushout, we get a factorisation $x\colon C_f\to T$ as required. Uniqueness comes from the universal property of the pushout and the fact that all plain arrows in the diagram are regular epimorphisms. On higher levels, essentially the same argument is valid.
\end{proof}

\begin{remark}\label{Remark Quotient}
	One may wonder if the construction of $F_1$ using the coequaliser $r$ above should not yield the trivialisation functor, considering that $\Triv$ preserves coequalisers, as any left adjoint. Indeed, the long rectangle at the back in Figure~\ref{Construction of the reflector} is a coequaliser in $\Ext(\C)$, whose image by the trivialisation is thus a coequaliser in $\TCov(\C)$. However, coequalisers in $\TCov(\C)$ are different from coequalisers in~$\Ext(\C)$. Indeed, a quotient of a trivial extension in $\Ext(\C)$ is not a trivial extension in general, but it is a covering, whence this natural construction of $F_1$ using the quotient, along a universal splitting, of the trivialisation.
\end{remark}

Assuming again that $\B_0$ is a subvariety of a variety of algebras $\C_0$, the reflection is strongly Birkhoff if and only if the kernel pair of each unit morphism $\eta_A$ permutes with every congruence on the object $A$. (As mentioned before, this is a consequence of Proposition~5.4 in~\cite{Carboni-Kelly-Pedicchio}.) If so, then by the preceding results, $\Gamma_0$-coverings are reflective amongst (one-fold) extensions, and this reflection is automatically $\E_2$-Birkhoff. As for the base level, the only remaining condition to be checked in order to obtain a strongly $\E_2$-Birkhoff Galois structure $\Gamma_1$ is that a similar permutability condition holds for the unit morphisms at this new level. In the spirit of Lemma~\ref{LemmaTrivDoublePushoutsReduceToPushouts}, thanks to the shape of these unit morphisms, we only care about the initial component of their kernel pair (a congruence in $\C_0$). If this permutability condition holds in $\Gamma_1$, then we automatically obtain the $\E_3$-Birkhoff Galois structure $\Gamma_2$. Checking the appropriate permutability condition, we may see that $\Gamma_2$ is strongly $\E_3$-Birkhoff. We may then proceed by induction on $n$, checking the permutability condition at each new level $\Gamma_n$.

\subsection{Mal'tsev varieties}
In the context of a Mal'tsev variety $\B_0$, at each level of the tower the permutability condition comes for free. Here the process we describe is well studied in the literature: first in the context of semi-abelian categories, then in Mal'tsev categories~\cite{EGVdL,Ever2010,Ever2014}.

\subsection{Racks and quandles}\label{Racks Quandles}
Our motivating example is the \emph{connected components adjunction} in the variety of quandles (or, more generally, racks). The idea is that the coverings of~\cite{Eis2014} may be obtained from a Galois theoretic perspective as in~\cite{Eve2015}. Our aim here is to develop the corresponding higher-dimensional Galois theory. A~detailed study of the first two stages of this process (the corresponding $\Gamma_0$ and~$\Gamma_1$) may be found in~\cite{Ren2020,Ren2021}. The remaining levels of the tower will be discussed in~\cite{Ren2021c}. The reason why the permutability condition holds in this context is that each so-called \emph{centralisation relation} (which is the kernel pair of the initial component of a unit morphism at some level) can be viewed as an \emph{orbit congruence}---see~\cite{BLRY2010}. In order to prove this result at higher levels, it is convenient to know that the concept of a covering, and thus also these centralisation relations, satisfy a fundamental symmetry property. Describing this symmetry property, and showing that coverings satisfy it in a suitably general context, is the purpose of the remainder of this article.

\section{Discrete fibrations of double extensions}\label{Section Discrete Fibrations Double}

We now start our development of a theory of \emph{symmetric coverings}, which allows a non-inductive treatment of the concept of a covering. The basic idea is very simple and based on the concept of a \emph{discrete fibration}, which is what we call an $n$-fold extension which, considered as an $n$-cube, is a limit diagram---the easiest example being a pullback square of regular epimorphisms. Recall that, in a Galois structure $\Gamma = (\Y,\X,L,\I,\eta,\epsilon,\E)$, an extension $c$ as in the diagram
\[
	\vcenter{\xymatrix@!0@=4em{ X \ar[d]_-{x} & T \pullback \pullbackleft \ar[d]^-{t} \ar[l]_-{} \ar[r] & A \ar[d]^-{c} \\
	Y & E \ar[l]^-{} \ar[r]_-{e} & B
	}}
\]
is a $\Gamma$-\emph{covering} if and only if there is an extension $e$ such that the pullback of $c$ along~$e$ is a \emph{trivial $\Gamma$-covering} $t$, which means that $t$ is a pullback of a \emph{primitive $\Gamma$-covering}~$x$ (which is an extension in the given full reflective subcategory~$\X$ of the ground category $\Y$). In other words, $c$ is a covering when a span of discrete fibrations $x\ot t\to c$ exists in $\Y$, where $x$ lies in $\X$. This idea applies at arbitrary levels of a given tower of Galois structures; it suffices to replace the pullback squares with higher-order discrete fibrations as defined above. Our main result here is Theorem~\ref{Main Theorem}, which says that in the context of an appropriate tower of Galois structures, this approach does indeed characterise the inductively defined coverings of Galois theory---thus showing that the concept of a higher covering is symmetric: it does not depend on the way the $n$-cube is considered as an arrow between $(n-1)$-cubes.

In order to achieve Theorem~\ref{Main Theorem}, we first consider an $n$-cubical extension as a double extension in $\C_{n-2}$. Then we prove, in this section and in Section~\ref{Section Double Coverings}, that for such an $n$-fold extension, the property of being a $\Gamma_{n-1}$-covering admits a symmetrical interpretation in terms of discrete fibrations of order three in $\C_{n-2}$. In Section~\ref{Section Higher Coverings}, this is then used to obtain a symmetrical interpretation with respect to $\C_0$.

From now on, we assume that a tower of strongly Birkhoff Galois structures is given whose base level is $\Gamma_0$ with categories $(\C_0,\B_0)$ as before. For some fixed $n\geq 0$, we place ourselves at the level $\Gamma = \Gamma_n$ with $(\C,\B)=(\C_n, \B_n)$. An extension in $\C$ is an element of the class $\E = \E_{n+1}$. A \defn{double extension} is simply a $\E$-double extension. A $3$-cubical extension in $\C$ is the underlying diagram in $\C=\C_n$ of an $(n+3)$-fold extension---see Subsection~\ref{Subsection Ext and higher}. A (trivial/normal) covering is a (trivial/normal) $\Gamma$-covering. A (trivial/normal) double covering is a (trivial/normal) $\Gamma_{n+1}$-covering, etc.

\begin{definition}
	A $3$-cubical extension in $\C$ is a \defn{discrete fibration} when it is a limit cube: any cone of wavy arrows
	\[
		\vcenter{\xymatrix@1@!0@=2em{\cdot \ar@{~>}[dd] \ar@{~>}[rrrd] \ar@{~>}[dddr]|-{\hole} \ar@{-->}[rd]\\
		& \cdot \ar@{->}[rr] \ar@{->}[ld] \ar@{->}[dd]|-{\hole} && \cdot \ar@{->}[ld] \ar@{->}[dd] \\
		\cdot \ar@{->}[rr] \ar@{->}[dd] && \cdot \ar@{->}[dd]\\
		& \cdot \ar@{->}[rr]|-{\hole} \ar@{->}[ld] && \cdot \ar@{->}[ld]\\
		\cdot \ar@{->}[rr] && \cdot}}
	\]
	on it induces a unique dashed comparison arrow.
\end{definition}

\begin{example}\label{Example DF Via Cone}
	Any $3$-cubical extension, obtained as a pullback of two double extensions in $\C$, is a discrete fibration. To see this, let us consider such a cube and a cone on it (the wavy arrows).
	\[
		\vcenter{\xymatrix@1@!0@=2em{\cdot \ar@{~>}[dd] \ar@{~>}[rrrd] \ar@{~>}[dddr] \ar@{-->}[rd]\\
		& \cdot \skewpullbackdots \ar@{.>}[rr] \ar@{.>}[ld] \ar@{.>}[dd] && \cdot \ar@{->}[ld] \ar@{->}[dd] \\
		\cdot \ar@{->}[rr] \ar@{->}[dd] && \cdot \ar@{->}[dd]\\
		& \cdot \skewpullbackdots \ar@{.>}[rr] \ar@{.>}[ld] && \cdot \ar@{->}[ld]\\
		\cdot \ar@{->}[rr] && \cdot}}
	\]
	The universal property of the top square induces a dashed arrow that makes the top part of the diagram commute. The rest commutes as well, because the bottom square is also a pullback.
\end{example}

We may consider a discrete fibration as an arrow between double extensions in three non-equivalent ways. By making such a choice, the cube acquires a domain~$\alpha$ and a codomain $\beta$. The entire structure is then called a \defn{discrete fibration from~$\alpha$ to~$\beta$}.

\begin{example}\label{Example DF arrow between pullbacks}
	The above example has the following alternate interpretation: any $3$-cubical extension (in $\C$) that happens to be an arrow between double extensions which are pullback squares is a discrete fibration. Actually, here the domain and codomain may themselves be seen as discrete fibrations (of a lower order): this is the viewpoint of Example~\ref{Example Pullback as Disc Fib}.
\end{example}

Alternatively, we may choose one of the three ways a three-cube may be seen as a square of two-cubes---see the diagram in the statement of Proposition~\ref{Prop DF as Pullback Up To a Pullback} below. This viewpoint allows us to sharpen Example~\ref{Example DF Via Cone} into the following general characterisation.

Note that in what follows, we use the symbol ``$\equiv$'' to indicate that we are considering the same diagram twice, depicted at two different levels of the tower of Galois structures. In Proposition~\ref{Prop DF as Pullback Up To a Pullback} for instance, the \emph{vertices} in the diagram on the left are $1$-cubical extensions in $\C$---objects in $\C_{n+1}$, at the $(n+1)$-st level of the tower induced by $(\C_0,\B_0)$---so may be seen as arrows in $\C$, as in the diagram on the right.

\begin{proposition}\label{Prop DF as Pullback Up To a Pullback}
	A three-cubical extension is a discrete fibration if and only if, when viewed as a square of double extensions---the outer quadrangle in the diagram on the left
	\[
		\vcenter{\xymatrix@1@!0@=2em{\cdot \ar@{-->}[rd] \ar[drrr] \ar[dddr]\\
		&\cdot \pullbackdots \ar@{.>}[rr] \ar@{.>}[dd] && \cdot \ar@{->}[dd] \\\\
		&\cdot \ar@{->}[rr] && \cdot }}
		\qquad\equiv\qquad
		\vcenter{\xymatrix@1@!0@=2em{& \cdot \ar[ld] \ar@{-->}[rd] \ar[drrr] \ar[dddr]\\
		\cdot \ar@{-->}[rd] \ar[drrr] \ar[dddr] && \cdot \pullbackdots \ar@{.>}[rr] \ar@{.>}[ld] \ar@{.>}[dd] && \cdot \ar@{->}[ld] \ar@{->}[dd] \\
		&\cdot \pullbackdots \ar@{.>}[rr] \ar@{.>}[dd] && \cdot \ar@{->}[dd]\\
		&& \cdot \ar@{->}[rr] \ar@{->}[ld] && \cdot \ar@{->}[ld]\\
		&\cdot \ar@{->}[rr] && \cdot}}
	\]
	---the induced dashed comparison to the dotted pullback is not just a double extension, but a pullback square.
\end{proposition}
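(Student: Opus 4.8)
The plan is to unpack both the notion of "discrete fibration" (i.e.\ limit cube) and the notion of "the comparison to the dotted pullback is itself a pullback square" in terms of the same universal property, and to show they coincide. A three-cube $\tau$ viewed as a morphism of double extensions has a domain double extension, a codomain double extension, and — by taking the pullback of the codomain along the bottom map of $\tau$ — a canonical comparison morphism $\gamma$ of double extensions; this is the dashed arrow in the right-hand diagram. First I would note that, because we are working entirely with higher extensions, Lemma~\ref{LemmaHigherExtensionsCalculus}(3) tells us this pullback exists and is computed component-wise, and $\gamma$ is again a three-fold extension; so both statements in the proposition are statements about three-fold extensions and we may freely use the calculus of Section~\ref{Section Calculus}.

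The key observation is a general two-out-of-three fact about limit cones: if the codomain double extension, seen as a square in $\C_{n-2}$, together with its pullback along the chosen bottom arrow, is assembled into the right-hand diagram, then $\tau$ is a limit three-cube if and only if $\gamma$ is a limit three-cube. This is just the pasting law for limits: the outer three-cube $\tau$ is the composite of $\gamma$ with the three-cube $\pi$ expressing "the dotted square is the pullback of the codomain square along the bottom map", and $\pi$ is a limit cube by construction (it is literally a pullback of two double extensions, hence a discrete fibration by Example~\ref{Example DF Via Cone}). So it remains to show: $\gamma$, which is already known to be a morphism of double extensions whose target is a pullback square, is a limit three-cube if and only if $\gamma$ is a pullback square. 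This is precisely the content of Example~\ref{Example DF arrow between pullbacks} read in both directions — a three-cube that is an arrow between two pullback squares is a discrete fibration exactly when the comparison to the induced pullback is an isomorphism, i.e.\ exactly when the three-cube is itself a pullback square of double extensions.

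Concretely, I would argue as follows. For the "if" direction: suppose $\gamma$ is a pullback square. Its codomain (the dotted square) is a pullback square, and its domain is then the pullback of two pullback squares, so by the pasting law the domain is a pullback square too; now a three-cube which is a pullback of two double extensions whose all faces are pullbacks is a limit cube by Example~\ref{Example DF Via Cone}; composing with the limit cube $\pi$, $\tau$ is a limit cube. For the "only if" direction: suppose $\tau$ is a discrete fibration. Then $\gamma$ is a discrete fibration (by the pasting law, since $\pi$ is one and $\pi\comp\gamma=\tau$). Now feed into $\gamma$, as a cone, the actual pullback square $Q$ of the two legs of $\gamma$ seen as double extensions in $\C_{n-2}$ — this cone exists because the target of $\gamma$ is a pullback square, so $Q$ maps into it compatibly — and the universal property of the limit cube $\gamma$ produces a comparison $Q\to(\text{domain of }\gamma)$; checking it is inverse to the canonical comparison $(\text{domain})\to Q$ is the usual uniqueness argument for limits, so $\gamma$ is a pullback square.

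The main obstacle I anticipate is purely bookkeeping rather than conceptual: one must be careful that in the right-hand diagram of the statement there are really \emph{two} pullbacks involved — the dotted square is the pullback of the codomain double extension along the chosen bottom $1$-cube of $\tau$, and one then asks whether the comparison from the top face into this dotted square is a pullback — and one must verify that the three-fold extension $\gamma$ so obtained has the right domain and codomain (that its target, the dotted square, really is a pullback square, which it is by construction, and that its legs are genuine double extensions, which follows from Lemma~\ref{LemmaHigherExtensionsCalculus} and Lemma~\ref{LemmaWeakRightCancellation}). Once the correct $\gamma$ and $\pi$ are identified and the pasting law for limit cones is invoked, the proof is a short diagram chase; the only subtlety is that "is a pullback" for the composite square forces "is a pullback" for the first factor automatically, whereas the statement for the second factor ($\pi$ here) is supplied by hand via Example~\ref{Example DF Via Cone}.
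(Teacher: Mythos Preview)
Your strategy can in principle be made to work, but it is considerably more elaborate than what the paper does, and as written it contains confusions that would have to be resolved. The paper's proof is a single sentence: the two universal properties coincide. Concretely, writing the vertices of the cube as $A_S$ for $S\subseteq\{1,2,3\}$, a cone on the punctured cube amounts to maps into $A_{\{1\}}$, $A_{\{2\}}$, $A_{\{3\}}$ agreeing pairwise in $A_{\{1,2\}}$, $A_{\{1,3\}}$, $A_{\{2,3\}}$; grouping the first two gives exactly a map into $P_\top = A_{\{1\}}\times_{A_{\{1,2\}}}A_{\{2\}}$, and the remaining two compatibilities say precisely that this map and the map into $A_{\{3\}}$ agree in $P_\perp = A_{\{1,3\}}\times_{A_{\{1,2,3\}}}A_{\{2,3\}}$. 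So the limit of the punctured cube \emph{is} $P_\top\times_{P_\perp} A_{\{3\}}$, which is the content of the proposition. That is the entire argument.

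Your decomposition-and-pasting route runs into two difficulties. First, you oscillate between two meanings of $\gamma$: sometimes it is the comparison $f\to P$ in $\Ext(\C)$, a \emph{square} in $\C$ (which is what the proposition is about), and sometimes it is a \emph{three-cube} that can be composed with $\pi$ to yield $\tau$. These are different objects; the latter is obtained from the former by padding with an identity on the bottom, and then your claim that ``its codomain (the dotted square) is a pullback square'' is no longer automatic---the codomain of that $3$-cube is the double extension $P\to h$, which has no reason to be a pullback. Your appeal to Example~\ref{Example DF arrow between pullbacks} ``in both directions'' suffers from the same slippage. Second, the step ``composing with the limit cube $\pi$, $\tau$ is a limit cube'' is precisely Corollary~\ref{Corollary DF compose}, which in the paper is \emph{deduced from} the present proposition; invoking it here is circular. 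One could prove the needed pasting law for limit cubes directly, but carrying that out is exactly the cone-bookkeeping of the one-line argument above, so the detour buys nothing.
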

\begin{proof}
	It is easily seen that the universal properties those cubes satisfy coincide.
\end{proof}

Since a composite of two pullback squares is again a pullback, this gives us:

\begin{corollary}\label{Corollary DF compose}
	Discrete fibrations compose, in whichever way they are considered as arrows between double extensions. \noproof
\end{corollary}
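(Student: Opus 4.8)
The plan is to deduce the statement from Proposition~\ref{Prop DF as Pullback Up To a Pullback} together with two elementary facts: a composite of two pullback squares is a pullback square (as quoted just above), and a base change of a pullback square, wherever it is defined, is again a pullback square.

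Fix two composable discrete fibrations, say $D_{1}$ from $\alpha$ to $\beta$ and $D_{2}$ from $\beta$ to $\gamma$, for one of the three ways in which a three-cube is viewed as an arrow between double extensions. First I would observe that the composite three-cube $D_{2}\comp D_{1}\colon\alpha\to\gamma$ is again a three-cubical extension in $\C$: regarded as a morphism between the double extensions $\alpha$ and $\gamma$, it is a composite of two $(n+3)$-fold extensions, so it is itself an $(n+3)$-fold extension by Lemma~\ref{LemmaHigherExtensionsCalculus}(2).

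Next I would present $D_{2}\comp D_{1}$ as a square of double extensions in a presentation for which the direction along which $D_{1}$ and $D_{2}$ were composed is one of the two ``square'' directions; this is always possible, since one only has to take either of the remaining two directions as the ``corner'' direction. In such a presentation, $D_{2}\comp D_{1}$ appears as the outer square of the $(2\times 3)$-grid of double extensions obtained by horizontally pasting the chosen square presentation $Q_{1}$ of $D_{1}$ with that $Q_{2}$ of $D_{2}$ along the column over $\beta$. A routine computation of pullbacks then shows that the comparison square from the top-left corner of this outer square to its induced pullback factors as the comparison square of $Q_{1}$ followed by a base change (along an edge of $Q_{1}$) of the comparison square of $Q_{2}$. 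By Proposition~\ref{Prop DF as Pullback Up To a Pullback}, both of these comparison squares are pullback squares; base changes and composites of pullback squares are pullback squares, so the comparison square for $D_{2}\comp D_{1}$ is a pullback square as well. Proposition~\ref{Prop DF as Pullback Up To a Pullback} then tells us that $D_{2}\comp D_{1}$ is a discrete fibration. Since the chosen direction entered only through the notation, the conclusion holds in whichever of the three ways the discrete fibrations are viewed as arrows between double extensions; a discrete fibration being symmetric, the composite is again one in all three of its guises.

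I do not expect a serious obstacle here: the whole argument is a pasting-of-pullbacks diagram chase of exactly the kind left implicit behind ``the universal properties those cubes satisfy coincide'' in the proof of Proposition~\ref{Prop DF as Pullback Up To a Pullback}, and the relevant pullbacks all exist by Lemma~\ref{LemmaHigherExtensionsCalculus}(3). The one point that calls for a modicum of care is lining up the direction of composition with the direction used for the square presentation, so that $D_{2}\comp D_{1}$ really is the outer square of the horizontal paste of $Q_{1}$ and $Q_{2}$; once the three-cube is drawn, this is immediate.
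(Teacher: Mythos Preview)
Your proposal is correct and is essentially the paper's own argument, spelled out in full: the paper's one-line justification ``since a composite of two pullback squares is again a pullback'' is precisely the pasting argument you describe, where the comparison square of the composite factors as the comparison square of $Q_{1}$ followed by a base change of that of $Q_{2}$, and both pieces are pullbacks by Proposition~\ref{Prop DF as Pullback Up To a Pullback}. Your extra care in checking that the composite is again a three-cubical extension (via Lemma~\ref{LemmaHigherExtensionsCalculus}(2)) and in aligning the direction of composition with one of the square directions is exactly the bookkeeping the paper suppresses.
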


\begin{example}\label{Visual example}
	Combining Corollary~\ref{Corollary DF compose} with Example~\ref{Example DF Via Cone}, we see that the horizontal composite of the cubes of extensions in the diagram
	\[
		\vcenter{\xymatrix@1@!0@=2em{ & \cdot \opskewpullback \ar@{->}[rr] \ar@{->}[ld] \ar@{->}[dd]|{\hole} && \cdot \pullback \opskewpullback \ar@{->}[rr] \ar@{->}[ld] \ar@{->}[dd]|{\hole} && \cdot \skewpullback \ar@{->}[rr] \ar@{->}[ld] \ar@{->}[dd]|{\hole} && \cdot \ar@{->}[ld] \ar@{->}[dd] \\
		\cdot \ar@{->}[rr] \ar@{->}[dd] && \cdot \pullback \ar@{->}[rr] \ar@{->}[dd] && \cdot \ar@{->}[rr] \ar@{->}[dd] && \cdot \ar@{->}[dd]\\
		& \cdot \ar@{->}[rr]|{\hole} \ar@{->}[ld] &&\cdot \ar@{->}[rr]|{\hole} \ar@{->}[ld] && \cdot \skewpullback \ar@{->}[rr]|{\hole} \ar@{->}[ld] && \cdot \ar@{->}[ld]\\
		\cdot \ar@{->}[rr] && \cdot \ar@{->}[rr] && \cdot \ar@{->}[rr] && \cdot}}
	\]
	is a discrete fibration.
\end{example}

\section{Double coverings}\label{Section Double Coverings}

\begin{definition}
	A double extension $\tau$ in $\C$ is called a \defn{symmetrically trivial double covering (with respect to $\Gamma$)} when a discrete fibration $\tau\to \beta$ exists whose codomain~$\beta$ lies in $\B$ (it is a primitive $\Gamma$-covering).
\end{definition}

\begin{lemma}\label{Lemma Trivial}
	Any symmetrically trivial double covering is a trivial double covering, in whichever way the square is considered as an arrow between arrows.
\end{lemma}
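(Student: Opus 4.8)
The plan is to reduce the assertion to the defining property of a trivial double covering — that its $F_{n+1}$-reflection square be a pullback in $\C_{n+1}$ — and to verify this by means of Proposition~\ref{Prop DF as Pullback Up To a Pullback} together with the fact that pullbacks along higher extensions are computed component-wise.

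Write $\phi\colon\tau\to\beta$ for the given discrete fibration and fix one of the two ways of reading the double extension $\tau$ as an arrow between arrows in $\C$; this simultaneously selects one of the three ways of viewing the $3$-cube $\phi$ as a square of double extensions, in which $\tau$ and $\beta$ become parallel morphisms of $\C_{n+1}$ — the two ``rows'' — joined by the two ``columns'' of $\phi$. Since $\beta$ lies in $\B$, every vertex of $\beta$ is an object of $\B=\B_n$; hence each edge of $\beta$ is an extension between objects of $\B$, that is, a primitive $\Gamma$-covering, and so in particular a $\Gamma$-covering and an object of $\B_{n+1}=\Cov(\C)$. Therefore $\beta$, read as a morphism of $\C_{n+1}$, lies in $\B_{n+1}$: it is a primitive $\Gamma_{n+1}$-covering. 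This conclusion does not depend on which pair of opposite edges of $\beta$ is chosen, and this is precisely why the statement holds ``in whichever way the square is considered as an arrow between arrows'': the argument that follows applies verbatim to either reading of $\tau$.

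By Proposition~\ref{Prop DF as Pullback Up To a Pullback}, the comparison $\psi$ from $\tau$ to the pullback $Q$ (formed in $\C_{n+1}$) of $\beta$ along the relevant column $c$ of $\phi$ is not merely a double extension but a pullback square in $\C$; thus $\tau=\pi\comp\psi$ in $\C_{n+1}$, where $\pi$ is the other projection of $Q$, onto the codomain of $\tau$. Here $c$ is an extension because $\phi$, being an $(n+3)$-fold extension, has all its edges in $\E_{n+2}$; so, as $\beta$ is a primitive $\Gamma_{n+1}$-covering and $\pi$ is a pullback of $\beta$ along an extension, $\pi$ is a trivial $\Gamma_{n+1}$-covering, whence its $F_{n+1}$-reflection square is a pullback in $\C_{n+1}$. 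Since reflection squares paste horizontally, the $F_{n+1}$-reflection square of $\tau=\pi\comp\psi$ is the composite of those of $\psi$ and $\pi$; the right-hand one being a pullback, $\tau$ is a trivial $\Gamma_{n+1}$-covering as soon as the reflection square of $\psi$ is a pullback, i.e.\ as soon as the pullback square $\psi$ is itself a trivial $\Gamma_{n+1}$-covering. To see this, one computes component-wise — a square in $\C_{n+1}$ with an extension on one side is a pullback exactly when its two $\C_n$-components are: by the shape of the units (identity in every $\C_0$-component but the initial one, see~\cite{Im-Kelly}) the non-initial component of the reflection square of $\psi$ has isomorphisms for its two vertical edges and is therefore a pullback, while for the initial component one invokes that coverings (hence the centralising congruences defining the $F_k$) are reflected by, and stable under, pullbacks along extensions, combined with the cancellation property of Lemma~\ref{LemmaCompositeOfDoubleExtensionsIsPullback}.

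The main obstacle is this last point: showing that a pullback square of extensions, regarded as a morphism one level up, is a trivial covering — equivalently, that $F_{n+1}$ preserves pullback squares, or that the relevant centralising congruence of a pulled-back extension is itself pulled back. Everything else is routine manipulation with the component-wise description of higher pullbacks and with Proposition~\ref{Prop DF as Pullback Up To a Pullback}.
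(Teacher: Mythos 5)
Your argument tracks the paper's own proof up to the use of Proposition~\ref{Prop DF as Pullback Up To a Pullback}, but then takes a turn that leaves a genuine gap. You factor $\tau=\pi\comp\psi$, where $\pi$ is the pullback of $\beta$ along a column (correctly identified as a trivial $\Gamma_{n+1}$-covering) and $\psi$ is the comparison, a pullback square of extensions in $\C$; you then need $\psi$ itself to be a trivial $\Gamma_{n+1}$-covering. This is precisely the claim you flag as ``the main obstacle'', and the justification you sketch does not establish it: that coverings are preserved and reflected by pullbacks along extensions only compares whether two extensions \emph{are} coverings, whereas what you need is a statement about how the unit $\eta^{n+1}$ (equivalently the centralisation congruence) of an arbitrary, possibly non-covering, extension transforms under pullback; neither this nor Lemma~\ref{LemmaCompositeOfDoubleExtensionsIsPullback} gives you that the initial component of the reflection square of $\psi$ is a pullback. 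As written, the proof is therefore incomplete at its crucial step.

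The gap is both avoidable and fillable. The paper avoids it by never asking whether $\pi$ or $\psi$ are separately trivial: the intermediate square obtained by pulling $\beta$ back along a column has, for its edges in the perpendicular direction, pullbacks of edges of $\beta$, which are trivial $\Gamma$-coverings and hence objects of $\B_{n+1}=\Cov(\C)$; read in that perpendicular direction the intermediate square is thus a \emph{primitive} $\Gamma_{n+1}$-covering, and $\tau$ is its component-wise pullback (one connecting square being the comparison pullback of Proposition~\ref{Prop DF as Pullback Up To a Pullback}, the other consisting of identities), so $\tau$ is trivial by the very characterisation of trivial coverings as pullbacks of primitive ones. Alternatively, your missing claim is true and can be closed as follows: by the strong $\E_{n+2}$-Birkhoff condition the comparison map of the reflection square at $\psi$ is an extension; its only non-identity $\C_0$-component is the initial one, whose kernel congruence is the intersection of the centralisation congruence of the domain of $\psi$---which is contained in $\Eq$ of that domain, since the unit is a morphism over the codomain---with $\Eq$ of the pullback projection; these meet trivially because $\psi$ is a pullback, so the comparison is a monic extension and hence an isomorphism. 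One of these two arguments must be supplied for your proof to go through.
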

\begin{proof}
	Let $\tau\to \beta$ be a discrete fibration whose codomain $\beta$ lies in $\B$, the square on the left below.
	\[
		\vcenter{\xymatrix@1@!0@=2em{\cdot \ar@{->}[rr]^-\tau \ar@{->}[dd] && \cdot \ar@{->}[dd] \\\\
		\cdot \ar@{->}[rr]_-{\beta} && \cdot }}
		\quad\equiv\quad
		\vcenter{\xymatrix@1@!0@=2em{& \cdot \ar@{}[rd]|-{\tau} \ar@{->}[rr] \ar@{->}[ld] \ar[dddd]|(.25){\hole} && \cdot\ar@{->}[ld] \ar[dddd] \\
		\cdot \ar@{->}[rr] \ar[dddd] && \cdot \ar[dddd]\\
		\\\\
		& \cdot \ar@{}[rd]|-{\beta} \ar@{->}[rr]|{\hole} \ar@{->}[ld] && \cdot \ar@{->}[ld]\\
		\cdot \ar@{->}[rr] && \cdot}}
		\quad=\quad
		\vcenter{\xymatrix@1@!0@=2em{& \cdot \pullbackdots \ar@{->}[rr] \ar@{->}[ld] \ar@{-->}[dd] && \cdot\ar@{->}[ld] \ar@{-->}[dd] \\
		\cdot \pullbackdots \ar@{->}[rr] \ar@{==}[dd] && \cdot \ar@{==}[dd]\\
		& \cdot \opskewpullbackdots \ar@{.>}[rr] \ar@{.>}[ld]|-s \ar@{.>}[dd] && \cdot \opskewpullbackdots \ar@{.>}[ld]|-t \ar@{.>}[dd] \\
		\cdot \ar@{->}[rr] \ar@{->}[dd] && \cdot \ar@{->}[dd]\\
		& \cdot \ar@{->}[rr]|(.3)d|{\hole} \ar@{->}[ld]|-a && \cdot \ar@{->}[ld]|-b\\
		\cdot \ar@{->}[rr]|-c && \cdot}}
		\quad\equiv\quad
		\vcenter{\xymatrix@1@!0@=2em{\cdot \pullbackdots \ar@{->}[rr]^-{\tau} \ar@{-->}[dd] && \cdot \ar@{-->}[dd] \\\\
		\cdot \ar@{->}[rr] \ar@{->}[dd] && \cdot \ar@{->}[dd] \\\\
		\cdot \ar@{->}[rr]_-{\beta} && \cdot }}
	\]
	When we view it as a cube (second diagram), we may use Proposition~\ref{Prop DF as Pullback Up To a Pullback} to decompose it using pullbacks as on the right. We find trivial $\Gamma$-coverings $s$ and~$t$, respective pullbacks of $a$ and $b$ which lie in $\B$. Since the top back square with the dashed arrows is a pullback, the top square $\tau$ in the middle diagrams is a trivial double covering, when viewed as an arrow from left to right, because it is a pullback of a double extension between $\Gamma$-coverings, the middle arrow in the diagram on the right.

	We could, of course, have taken pullbacks of $d$ and $c$ instead, which would have shown that $\tau$ is a trivial double covering, when viewed as an arrow from back to front.
\end{proof}

\begin{definition}
	A double extension $\alpha$ in $\C$ is called a \defn{symmetric double covering (with respect to $\Gamma$)} when a discrete fibration $\tau\to \alpha$ exists whose domain $\tau$ is a symmetrically trivial double covering.
\end{definition}

\begin{theorem}\label{Theorem Covering iff Symmetric Covering}
	A double extension is a double covering if and only if it is a symmetric double covering.
\end{theorem}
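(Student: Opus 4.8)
The plan is to play the inductive (direction-dependent) description of a double covering against the symmetric one, in both directions, using the three facts about discrete fibrations just established: a pullback of two double extensions is a discrete fibration (Example~\ref{Example DF Via Cone}); discrete fibrations compose, in any of their three possible directions (Corollary~\ref{Corollary DF compose}); and a discrete fibration is, in each direction, a genuine pullback square of double extensions followed by a pullback along a double extension (Proposition~\ref{Prop DF as Pullback Up To a Pullback}). I read ``double covering'' as ``$\Gamma_{n+1}$-covering'' throughout; applying the theory of Subsection~\ref{Subsection Categorical Galois Theory} at the level $\Gamma_{n+1}$ gives that $\alpha$ is a double covering exactly when there is a span of pullback squares $\beta_{0}\ot\tau_{0}\to\alpha$ in $\C$, with $\tau_{0}$ the pullback of $\alpha$ along a double extension and $\beta_{0}$ a primitive $\Gamma_{n+1}$-covering.

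For the implication ``symmetric double covering $\Rightarrow$ double covering'', I would take the witnessing discrete fibration $\tau\to\alpha$ with $\tau$ a symmetrically trivial double covering. By Lemma~\ref{Lemma Trivial}, $\tau$ is then a trivial double covering, hence a double covering. By Proposition~\ref{Prop DF as Pullback Up To a Pullback}, the discrete fibration $\tau\to\alpha$, viewed as a square of double extensions, breaks up as a genuine pullback square $\tau\to P$ followed by a pullback of $\alpha$ along a double extension; and, since $\Gamma_{n+1}$ is strongly $\E_{n+2}$-Birkhoff, at this level coverings are preserved and reflected by pullbacks along extensions and are stable under quotients along double extensions (Subsection~\ref{SubsectionStronglyBirkhoffAndCoveringsClosedAlongExtensions}). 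Chasing the fact that $\tau$ is a double covering through this decomposition --- first deducing that $P$ is a double covering, then that $\alpha$ is one --- finishes this direction; this is precisely where the characteristic property of coverings is used, namely that, unlike trivial or normal coverings, they are \emph{reflected} by pullbacks along extensions.

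For the converse I would start from a span $\beta_{0}\ot\tau_{0}\to\alpha$ as above. Each leg is a $3$-cubical extension in $\C$ arising as a pullback of two double extensions, hence a discrete fibration (Example~\ref{Example DF Via Cone}); so $\tau_{0}\to\alpha$ is a discrete fibration and $\tau_{0}$ is a pullback of the primitive $\Gamma_{n+1}$-covering $\beta_{0}$. The remaining work is to turn $\beta_{0}$ --- whose two components are merely $\Gamma$-coverings --- into a double extension $\beta_{1}$ that actually lies in $\B$, i.e.\ all of whose vertices are objects of $\B=\B_{n}$, together with a discrete fibration relating the two. This I would obtain by replaying the construction of Proposition~\ref{PropReflctOfCentralExtensions} one dimension up: using the splittings furnished by the tower (projective presentations, as in Section~\ref{Intermediate results}), pull $\beta_{0}$ back compatibly to a double extension whose components are \emph{trivial} $\Gamma$-coverings, then pass to the reflection under the strongly Birkhoff reflector --- whose reflection squares are pullbacks precisely because the components have been trivialised --- so that, by Proposition~\ref{Prop DF as Pullback Up To a Pullback}, the comparisons produced are discrete fibrations. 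Pulling $\tau_{0}$ back along this refinement yields a double extension $\tau$ with discrete fibrations $\tau\to\tau_{0}$ and $\tau\to\beta_{1}$ where $\beta_{1}$ lies in $\B$, so $\tau$ is symmetrically trivial; composing $\tau\to\tau_{0}\to\alpha$ (Corollary~\ref{Corollary DF compose}) then exhibits $\alpha$ as a symmetric double covering.

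The hard part is this last refinement. Passing from the canonical primitive covering $\beta_{0}$, whose components are only $\Gamma$-coverings, to one all of whose vertices lie in $\B$ --- compatibly across the square, and in a way that produces honest discrete fibrations rather than arbitrary morphisms --- is exactly where the enough-projectives hypothesis and the strong Birkhoff property at levels $n$ and $n+1$ are genuinely needed, in contrast with the trivial-covering case of Lemma~\ref{Lemma Trivial}, where the relevant double extension lies in $\B$ by assumption; as in Section~\ref{Intermediate results}, it relies on the special shape of the unit morphisms (all $\C_{0}$-components identities but for the initial one). A secondary, routine difficulty is bookkeeping the three directions of each cube so that the invocations of Lemma~\ref{Lemma Trivial} and Proposition~\ref{Prop DF as Pullback Up To a Pullback} in the first direction match the chosen orientation of $\tau\to\alpha$.
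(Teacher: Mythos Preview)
Your treatment of the converse (symmetric $\Rightarrow$ double covering) matches the paper's: decompose the discrete fibration $\tau\to\alpha$ via Proposition~\ref{Prop DF as Pullback Up To a Pullback}, push the covering property from $\tau$ to the intermediate pullback by stability under quotients along double extensions (Proposition~\ref{CentralExtensionsAreStableAlongDoubleExtensions}), then reflect it onto $\alpha$ via the remaining pullback.

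For the forward direction, your route diverges from the paper's, and your claim that enough projectives is ``genuinely needed'' is incorrect. The paper never invokes Proposition~\ref{PropReflctOfCentralExtensions} or projective presentations. Instead, starting from the trivial double covering $\gamma^*(\alpha)$, it unpacks the definition of ``trivial'' to obtain a pullback decomposition over an arrow $z\to y$ between $\Gamma$-coverings (essentially your $\beta_0$); then, using nothing more than the definition of $\Gamma$-covering, it splits $y$ by \emph{some} extension, pulls back, and splits the resulting pullback $\hat z$ of $z$ by another extension---no canonical or projective choice of splitting is required, only the bare existence guaranteed by the word ``covering''. The resulting $\tau$ has trivial $\Gamma$-covering components $s$, $t$, and the discrete fibration $\tau\to\beta$ into $\B$ is then obtained simply because the reflection squares of $s$ and $t$ are pullbacks, so the reflection $3$-cube is an arrow between pullback squares and hence a discrete fibration by Example~\ref{Example DF arrow between pullbacks}.

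So the refinement you describe---trivialise the $\Gamma$-covering components, then reflect---is the right idea, and your argument can be made to work; but it should be carried out directly from the definitions rather than by appealing to the reflector construction of Section~\ref{Intermediate results}. This matters: the theorem is stated and proved for an arbitrary tower of strongly Birkhoff Galois structures, with no assumption that $\C_0$ has enough projectives. As written, your version proves a weaker statement under an unnecessary extra hypothesis.
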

\begin{proof}
	Let $\alpha$ be a double extension which is a double covering, when viewed as an arrow $\alpha\colon d\to c$. Then there is a double extension considered as an arrow $\gamma\colon e\to c$ such that $\gamma^*(\alpha)\colon f\to e$ is a trivial double covering.
	\begin{equation*}\label{G*F}
		\vcenter{\xymatrix@1@!0@=2em{
		& \cdot \skewpullbackdots \ar@{.>}[rr]^-{\gamma^*(\alpha)} \ar@{.>}[ld] && \cdot \ar@{->}[ld]^-\gamma\\
		\cdot \ar@{->}[rr]_-\alpha && \cdot}}
		\qquad\equiv\qquad
		\vcenter{\xymatrix@1@!0@=2em{& \cdot \skewpullbackdots \ar@{.>}[rr] \ar@{.>}[ld] \ar@{.>}[dd]_(.7)f && \cdot \ar@{->}[ld] \ar@{->}[dd]^-e \\
		\cdot \ar@{->}[rr] \ar@{->}[dd]_-{d} && \cdot \ar@{->}[dd]^(.3){c}\\
		& \cdot \skewpullbackdots \ar@{.>}[rr] \ar@{.>}[ld] && \cdot \ar@{->}[ld]\\
		\cdot \ar@{->}[rr] && \cdot}}
	\end{equation*}
	This implies that there exist $\Gamma$-coverings $y$ and $z$ such that the back square in the above cube (the square $\gamma^*(\alpha)$, now viewed as the \emph{vertical} arrow $\underline{\gamma^*(\alpha)}$) may be decomposed as a vertical composite:
	\begin{equation}\label{EqDecompTriv}
		\vcenter{\xymatrix@1@!0@=2em{\cdot \ar@{.>}[rr] \ar@{.>}[dd]_-f && \cdot \ar@{->}[dd]^-e \\\\
		\cdot \ar@{.>}[rr] && \cdot }}
		\qquad=\qquad
		\vcenter{\xymatrix@1@!0@=2em{
		\cdot \ar@{.>}[rr] \ar@{-->}[dd] \pullbackdots && \cdot \ar@{-->}[dd]\\\\
		\cdot \ar@{.>}[rr] \ar@{.>}[dd]_-z && \cdot \ar@{.>}[dd]^-y\\\\
		\cdot \ar@{.>}[rr] && \cdot}}
	\end{equation}
	By the definition of a trivial double covering, we do indeed have $\Gamma$-coverings $z'$ and~$y'$, domain and codomain of a double extension $\lambda\colon z'\to y'$ which pulls back to~$\gamma^*(\alpha)$.
	\[
		\vcenter{\xymatrix@1@!0@=2em{
		& \cdot \skewpullback \ar@{->}[rr]^-{\gamma^*(\alpha)} \ar@{->}[ld] && \cdot \ar@{->}[ld]\\
		\cdot \ar@{->}[rr]_-\lambda && \cdot}}
		\qquad\equiv\qquad
		\vcenter{\xymatrix@1@!0@=2em{& \cdot \skewpullback \ar@{->}[rr] \ar@{->}[ld] \ar@{->}[dd]|-{\hole}_(.7)f && \cdot \ar@{->}[ld] \ar@{->}[dd]^-e \\
		\cdot \ar@{->}[rr] \ar@{->}[dd]_-{z'} && \cdot \ar@{->}[dd]^(.3){y'}\\
		& \cdot \skewpullback \ar@{->}[rr]|-{\hole} \ar@{->}[ld] && \cdot \ar@{->}[ld]\\
		\cdot \ar@{->}[rr] && \cdot}}
	\]
	In order to obtain the needed $\Gamma$-coverings $y$ and $z$, we may now take pullbacks as in the diagram
	\[
		\vcenter{\xymatrix@1@!0@=2em{&& \cdot\altskewpullback \ar[rr] \ar[ddddl]^(.25)f|(.75){\hole} \ar[dddll] \ar@{-->}[ldd] && \cdot \ar[ddddl]^-e \ar[dddll] \ar@{-->}[ldd]\\\\
		& \cdot \opskewpullbackdots \skewpullbackdots\ar@{.>}[rr] \ar@{.>}[ld] \ar@{.>}[dd]_(.7){z} && \cdot \opskewpullbackdots \ar@{.>}[ld] \ar@{.>}[dd]_-y \\
		\cdot \ar@{->}[rr] \ar@{->}[dd]_-{z'} && \cdot \ar@{->}[dd]^(.3){y'}\\
		& \cdot \skewpullback \ar@{->}[rr]|-{\hole} \ar@{->}[ld] && \cdot \ar@{->}[ld]\\
		\cdot \ar@{->}[rr] && \cdot}}
	\]
	and notice that the dashed comparison square between this cube and the induced dotted pullback is again a pullback.

	We come back to the vertical decomposition of $\underline{\gamma^*(\alpha)}$ in~\eqref{EqDecompTriv}. Since $y$ and $z$ are $\Gamma$-coverings, there exists a double extension $\delta$ which is such that when pulling back horizontally along it, we find trivial $\Gamma$-coverings $s$ and $t$.
	\begin{equation}\label{EqH}
		\vcenter{\xymatrix@1@!0@=2em{& \cdot \opskewpullbackdots \ar@{.>}[ld] \ar@{.>}[dd]^-\tau \\
		\cdot \ar@{->}[dd]_-{\underline{\gamma^*(\alpha)}} \\
		& \cdot \ar@{->}[ld]^-{\delta} \\
		\cdot }}
		\qquad\equiv\qquad
		\vcenter{\xymatrix@1@!0@=2em{& \cdot \pullbackdots \opskewpullbackdots \ar@{.>}[rr] \ar@{.>}[ld] \ar@{.>}[dd] && \cdot \opskewpullbackdots \ar@{.>}[ld] \ar@{.>}[dd] \\
		\cdot \pullback \ar@{->}[rr] \ar@{->}[dd] && \cdot \ar@{->}[dd]\\
		& \cdot \opskewpullbackdots \ar@{.>}[rr] \ar@{.>}[ld] \ar@{.>}[dd]_(.7){t} && \cdot \opskewpullbackdots \ar@{.>}[ld] \ar@{.>}[dd]^-s \\
		\cdot \ar@{->}[rr] \ar@{->}[dd]_-{z} && \cdot \ar@{->}[dd]^(.3){y}\\
		& \cdot \ar@{->}[rr]|{\hole} \ar@{->}[ld] && \cdot \ar@{->}[ld]\\
		\cdot \ar@{->}[rr] && \cdot}}
	\end{equation}
	To find such a $\delta$, we first consider an extension that splits $y$: the extension $y$ pulls back along it to a trivial $\Gamma$-covering $s$. We then take horizontal pullbacks in order to obtain the following cube.
	\[
		\vcenter{\xymatrix@1@!0@=2em{& \cdot \opskewpullbackdots \skewpullbackdots\ar@{.>}[rr] \ar@{.>}[ld] \ar@{.>}[dd]^(.7){\hat z} && \cdot \opskewpullbackdots \ar@{.>}[ld] \ar@{.>}[dd]^-s \\
		\cdot \ar@{->}[rr] \ar@{->}[dd]_-{z} && \cdot \ar@{->}[dd]^(.3){y}\\
		& \cdot \skewpullbackdots\ar@{.>}[rr] \ar@{.>}[ld] && \cdot \ar@{.>}[ld]\\
		\cdot \ar@{->}[rr] && \cdot}}
	\]
	We then consider an extension along which to pull back the $\Gamma$-covering $\hat z$ so that we find a trivial $\Gamma$-covering $t$.
	\[
		\vcenter{\xymatrix@1@!0@=2em{\cdot \ar@{.>}[rr] \pullbackdots \ar@{.>}[dd]_-t && \cdot \ar@{.>}[dd]^-{\hat z} \\\\
		\cdot \ar@{.>}[rr] && \cdot }}
	\]
	This yields the bottom cube in~\eqref{EqH}, whose bottom square is $\delta$. The other squares are obtained by taking further pullbacks. The resulting composite double extension in the back is called $\tau$.

	From its construction, it is clear that the composed three-cubical extension from~$\tau$ to $\alpha$ is a discrete fibration. Now we only need to prove that $\tau$ is a symmetrically trivial double covering. This follows from the fact that $s$ and $t$ are trivial $\Gamma$-coverings: if we reflect them into~$\B$, then the induced comparison cubes will be pullbacks.

	For the converse, we take discrete fibrations $\beta\ot \tau \to \alpha$ where $\beta$ lies in $\B$. Lemma~\ref{Lemma Trivial} then tells us that $\tau$ is a trivial double covering, whichever way we view it as an arrow. We now pull back as in the diagram
	\[
		\vcenter{\xymatrix@1@!0@=2em{\cdot \ar@{->}[rr]^-\tau \ar@{->}[dd] && \cdot \ar@{->}[dd] \\\\
		\cdot \ar@{->}[rr]_-{\alpha} && \cdot }}
		\qquad\equiv\qquad
		\vcenter{\xymatrix@1@!0@=2em{& \cdot \pullbackdots \ar@{->}[rr]^-{\tau_{\ttop}} \ar@{->}[ld] \ar@{.>}[dd] && \cdot\ar@{->}[ld] \ar@{.>}[dd] \\
		\cdot \ar@{->}[rr]^(.7){\tau_{\pperp}} \ar@{=}[dd] && \cdot \ar@{=}[dd]\\
		& \cdot \opskewpullbackdots \ar@{.>}[rr] \ar@{.>}[ld] \ar@{.>}[dd] && \cdot \opskewpullbackdots \ar@{.>}[ld] \ar@{.>}[dd] \\
		\cdot \ar@{->}[rr] \ar@{->}[dd] && \cdot \ar@{->}[dd]\\
		& \cdot \ar@{->}[rr]_(.25){\alpha_{\ttop}}|{\hole} \ar@{->}[ld] && \cdot \ar@{->}[ld]\\
		\cdot \ar@{->}[rr]_-{\alpha_{\pperp}} && \cdot}}
	\]
	where in the diagram on the right, the top square is $\tau$ and the bottom square is $\alpha$. Notice that the middle horizontal square is a double covering as a quotient of the double covering $\tau$. It follows that $\alpha$, viewed as an arrow from the left to the right, is a double covering as well. We could, however, use essentially the same argument to show that $\alpha$ is a double covering, when viewed as an arrow from back to front.
\end{proof}

\begin{remark}
	Notice that the very end of this argument is not valid for normal double coverings, since these are not closed under quotients---see also Remark~\ref{Remark Quotient}.
\end{remark}

\begin{corollary}\label{Corollary Double is Symmetric}
	The concept of a double covering is symmetric: it does not depend on the way the square is considered as an arrow between arrows.\noproof
\end{corollary}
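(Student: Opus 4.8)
The plan is to deduce this at once from Theorem~\ref{Theorem Covering iff Symmetric Covering}, once we notice that the notion of a \emph{symmetric double covering} carries, by construction, no choice of direction. Unravelling the definitions, a double extension $\alpha$ is a symmetric double covering exactly when there is a limit $3$-cube, one of whose three pairs of opposite faces exhibits it as a discrete fibration $\tau\to\alpha$, with $\tau$ in turn admitting a discrete fibration $\tau\to\beta$ onto a primitive $\Gamma$-covering $\beta$ in $\B$. Choosing a pair of opposite faces of a $3$-cube only distinguishes a ``domain square'' from a ``codomain square''; it never equips either of those squares with an orientation as an arrow between arrows, and the auxiliary requirement that $\tau$ be a symmetrically trivial double covering is likewise a property of $\tau$ as a bare square. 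Hence ``$\alpha$ is a symmetric double covering'' is a predicate on the square $\alpha$ alone, insensitive to the two ways of reading it as a morphism in $\C_{n+1}$.

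Now a double extension $\alpha$ can be regarded as an arrow between arrows in exactly two ways, say the horizontal and the vertical reading. Theorem~\ref{Theorem Covering iff Symmetric Covering} applied to the horizontal reading says that $\alpha$ read horizontally is a double covering if and only if the square $\alpha$ is a symmetric double covering; applied to the vertical reading it says that $\alpha$ read vertically is a double covering if and only if the square $\alpha$ is a symmetric double covering. The right-hand conditions coincide, so the two left-hand conditions are equivalent, which is precisely the assertion.

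I expect no genuine obstacle here: all the work sits in Section~\ref{Section Discrete Fibrations Double} and in the proof of Theorem~\ref{Theorem Covering iff Symmetric Covering}, and what remains is only the bookkeeping above. Should one prefer a direct argument, one would simply re-run that proof with the roles of the two cube directions interchanged: in the forward direction, the choice of splitting and the pullbacks taken never privilege one dimension, and in the backward direction the middle square is a double covering as a \emph{quotient} of $\tau$ along a double extension (Proposition~\ref{CentralExtensionsAreStableAlongDoubleExtensions}), a property that does not see orientation, while discrete fibrations compose in whichever direction they are read (Corollary~\ref{Corollary DF compose}), keeping the intermediate constructions coherent. As remarked just above, this last step is exactly where the argument fails for \emph{normal} double coverings, which are not stable under quotients.
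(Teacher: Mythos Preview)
Your proposal is correct and matches the paper's approach exactly: the corollary is marked \noproof in the paper, being an immediate consequence of Theorem~\ref{Theorem Covering iff Symmetric Covering} together with the observation that ``symmetric double covering'' is a predicate on the bare square $\alpha$. You have simply spelled out the bookkeeping behind this, including the key point that the definition of a discrete fibration $\tau\to\alpha$ singles out a pair of opposite faces of a limit $3$-cube but never orients those faces.
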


\section{Higher coverings}\label{Section Higher Coverings}

There is also a symmetric interpretation of the concept of a higher covering, where the above definitions and results may essentially be copied (and used in a proof by induction). The idea is to view a covering as an $n$-dimensional \emph{object} rather than as an \emph{arrow} between $(n-1)$-dimensional objects. This improves upon a similar result in~\cite{RVdL2}, valid for abelianisation in semi-abelian categories. Recall the introduction to Section~\ref{Section Discrete Fibrations Double} where we explain what happens for (one-dimensional) $\Gamma_0$-coverings:

\begin{example}\label{Example Pullback as Disc Fib}
	A discrete fibration of order $2$ is the same thing as a pullback square of $1$-fold extensions in $\C_0$. By definition then, an extension $\alpha$ is a covering if and only if there is a span of discrete fibrations ${\beta\ot \tau\to \alpha}$ where $\beta$ is a primitive $\Gamma_0$-covering: an extension in~$\B_0$.
\end{example}

We fix a level $n\geq 2$ and view the $n$-fold extensions in the given tower of Galois structures above $\Gamma_0$ as the \emph{objects} of study. Discrete fibrations are specific morphisms of such:

\begin{definition}
	An $(n+1)$-cubical extension $\alpha$ in $\C_0$ is a \defn{discrete fibration (of order~$n+1$)} when it is a limit cube---more precisely, the cone out of the initial object $\alpha_{\meet}$ is a limit of the diagram without $\alpha_{\meet}$.
\end{definition}

We may consider a discrete fibration of order $n+1$ as an arrow between two $n$-cubical extensions in~$n+1$ non-equivalent ways. By making such a choice, the cube acquires a domain $\alpha$ and a codomain $\beta$. The entire structure is then called a \defn{discrete fibration from $\alpha$ to $\beta$}.

\begin{example}\label{Example Higher DF pullback}
	Any $(n+1)$-cubical extension, obtained as a pullback in $\C_{n-1}$ of two $n$-fold extensions, is a discrete fibration of order $n+1$: this is an instance of Example~\ref{Example DF Via Cone}.
\end{example}

\begin{example}
	Example~\ref{Example DF arrow between pullbacks} is now mimicked by the fact---which is a reformulation of Example~\ref{Example Higher DF pullback}---that any $(n+1)$-cubical extension between discrete fibrations of order $n$ is a discrete fibration of order $n+1$.
\end{example}

Alternatively, we may choose one of the $n+1$ ways an $(n+1)$-cube may be seen as a commutative square whose edges are $n$-cubes. We then obtain the following general version of Proposition~\ref{Prop DF as Pullback Up To a Pullback}.

\begin{proposition}\label{Prop DF as Pullback Up To a DF}
	An $(n+1)$-cubical extension is a discrete fibration if and only if, when viewed as a square of $n$-cubical extensions---the outer quadrangle in the diagram
	\[
		\vcenter{\xymatrix@1@!0@=2em{\cdot \ar@{-->}[rd] \ar[drrr] \ar[dddr]\\
		&\cdot \pullbackdots \ar@{.>}[rr] \ar@{.>}[dd] && \cdot \ar@{->}[dd] \\\\
		&\cdot \ar@{->}[rr] && \cdot }}
		\qquad\equiv\qquad
		\vcenter{\xymatrix@1@!0@=2em{& \cdot \ar[ld] \ar@{-->}[rd] \ar[drrr] \ar[dddr]|{\hole}\\
		\cdot \ar@{-->}[rd] \ar[drrr] \ar[dddr] && \cdot \pullbackdots \ar@{.>}[rr] \ar@{.>}[ld] \ar@{.>}[dd] && \cdot \ar@{->}[ld] \ar@{->}[dd] \\
		&\cdot \pullbackdots \ar@{.>}[rr] \ar@{.>}[dd] && \cdot \ar@{->}[dd]\\
		&& \cdot \ar@{->}[rr]|{\hole} \ar@{->}[ld] && \cdot \ar@{->}[ld]\\
		&\cdot \ar@{->}[rr] && \cdot}}
	\]
	---the induced dashed comparison to the dotted pullback is not just an $n$-cubical extension, but a discrete fibration of order $n$.
\end{proposition}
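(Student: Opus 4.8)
The plan is to prove the equivalence exactly as in Proposition~\ref{Prop DF as Pullback Up To a Pullback}: I will show that both conditions encode \emph{the same} universal property, the only new feature being that the one-step pullbacks of the $n=2$ case are replaced by $(n-1)$-step limits. (One could equally argue by induction on $n$ with Proposition~\ref{Prop DF as Pullback Up To a Pullback} as the base case, but the direct argument is uniform in $n$ and no shorter.) Throughout, the role of the exactness hypotheses is only to guarantee that the limits and pullbacks written below exist and are computed componentwise; this is supplied by Lemma~\ref{LemmaHigherExtensionsCalculus} together with Remark~\ref{RemarkPullbacksWheneverNeeded} (and, where coequalisers of the associated kernel pairs intervene, Proposition~\ref{PropositionBarrKock}).

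First I would fix notation for the chosen decomposition. Viewing the $(n+1)$-cubical extension $\alpha$, in the chosen one of the $n+1$ ways, as an arrow between $n$-cubical extensions, and hence as a commutative square whose four corners $A$, $B$, $C$, $D$ are $(n-1)$-cubical extensions---with $A$ the corner containing the initial object $\alpha_{\meet}$---and whose four edges are $n$-cubical extensions, I form the pullback $P$ of the cospan $B\to D\leftarrow C$ in $\C_{n-1}$. By Lemma~\ref{LemmaHigherExtensionsCalculus} this pullback exists, is again an $(n-1)$-cubical extension computed componentwise in $\C_0$, and its legs $P\to B$, $P\to C$ are $n$-cubical extensions; the induced comparison $\kappa\colon A\to P$ is precisely the comparison map exhibiting $\alpha$ as an $(n+1)$-fold extension for the chosen decomposition, so $\kappa$ is an $n$-cubical extension and it makes sense to ask whether the $n$-cube $\kappa$ is a discrete fibration of order $n$. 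Nothing here depends on which direction was singled out, which is the symmetry being asserted.

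The heart of the argument is the identification of the two relevant punctured diagrams. Saying that $\alpha$ is a discrete fibration of order $n+1$ means that the cone out of $\alpha_{\meet}$ exhibits $\alpha_{\meet}$ as a limit of the punctured $(n+1)$-cube $\alpha\setminus\{\alpha_{\meet}\}$. I would split this punctured diagram into (i)~the punctured $(n-1)$-cube $A\setminus\{\alpha_{\meet}\}$, (ii)~the subdiagram spanned by $B$, $C$, $D$ and the edges between them, and (iii)~the connecting edges from the vertices of $A$ to those of $B$ and of $C$. Since limits commute with limits, the limit of the subdiagram (ii) is exactly the limit of the $(n-1)$-cube $P=B\times_D C$ regarded as a diagram, and each connecting edge of (iii) factors through the corresponding leg of $P$ by way of $\kappa$. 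A direct bijection of cones---using that the maps into the vertices of $D$ are forced by the maps into those of $B$ (or $C$), and that a compatible pair of maps into vertices of $B$ and $C$ is the same data as a map into the corresponding vertex of $P$---then shows that a cone over $\alpha\setminus\{\alpha_{\meet}\}$ is the same data as a cone over the punctured $n$-cube $\kappa\setminus\{\kappa_{\meet}\}$, where $\kappa_{\meet}=A_{\meet}=\alpha_{\meet}$, naturally and compatibly with the distinguished cone with vertex $\alpha_{\meet}$. Consequently that cone is a limit cone for $\alpha\setminus\{\alpha_{\meet}\}$ if and only if it is one for $\kappa\setminus\{\kappa_{\meet}\}$, i.e.\ $\alpha$ is a discrete fibration of order $n+1$ if and only if $\kappa$ is a discrete fibration of order $n$; since the two conditions are literally the same universal property, the equivalence follows.

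The step I expect to be the main obstacle is making items (ii)--(iii) and the cone bijection precise: one must be careful that pullbacks of higher extensions are genuinely computed componentwise, so that $P$ and $\kappa$ are the higher extensions claimed, and that interchanging the limit over the ``square directions'' with the limit over the remaining ``cube directions'' is legitimate already in $\C_{n-1}$ rather than only pointwise in $\C_0$---this is exactly where Lemma~\ref{LemmaHigherExtensionsCalculus}, Remark~\ref{RemarkPullbacksWheneverNeeded} and Proposition~\ref{PropositionBarrKock} do the work. Once the two punctured diagrams are matched on the nose, the conclusion is immediate, just as in Proposition~\ref{Prop DF as Pullback Up To a Pullback}, and visibly independent of which of the $n+1$ square decompositions was used.
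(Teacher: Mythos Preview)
Your proposal is correct and takes essentially the same approach as the paper. Both identify the universal property of the $(n+1)$-cube being a limit with that of the comparison $n$-cube being a limit; the paper simply calls this ``clear'' and remarks that it may be phrased as an induction with Proposition~\ref{Prop DF as Pullback Up To a Pullback} as the base step, whereas you spell out the underlying cone bijection in detail and note the inductive route as an alternative.
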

\begin{proof}
	It is clear that the outer cube on the right is a limit $(n+1)$-cube if and only if the dashed comparison square, viewed as an $n$-cube, is a limit. We may use this as the induction step in a proof by induction of which Proposition~\ref{Prop DF as Pullback Up To a Pullback} forms the base step.
\end{proof}

We obtain the following generalisation of Corollary~\ref{Corollary DF compose} and Example~\ref{Visual example}.

\begin{corollary}
	Any composite of two discrete fibrations of order $n+1$, considered as arrows between $n$-cubical extensions, is again a discrete fibration.\noproof
\end{corollary}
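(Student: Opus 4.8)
The plan is to imitate the derivation of Corollary~\ref{Corollary DF compose} from Proposition~\ref{Prop DF as Pullback Up To a Pullback}, but to route it through Proposition~\ref{Prop DF as Pullback Up To a DF} and iterate, so that the argument becomes a proof by induction on~$n$. For the base case $n=1$, a discrete fibration of order~$2$ is exactly a pullback square of $1$-fold extensions in $\C_0$ by Example~\ref{Example Pullback as Disc Fib}, and the horizontal (or vertical) composite of two pullback squares is a pullback square by the usual pasting law; alternatively, one may start the induction one step higher, at order~$3$, with Corollary~\ref{Corollary DF compose}.

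For the inductive step, assume the claim at order~$n$, and let $\phi\colon \alpha\to \beta$ and $\psi\colon \beta\to \gamma$ be discrete fibrations of order $n+1$, composable as arrows between $n$-cubical extensions. Pick one of the $n$ directions transverse to the direction of composition; along it, $\phi$, $\psi$ and the composite $\psi\comp \phi$ all present themselves as commutative squares of $n$-cubical extensions, so that Proposition~\ref{Prop DF as Pullback Up To a DF} applies. It tells us that the comparisons $c_\phi$ and $c_\psi$ from the domains of those squares to the corresponding pullbacks are discrete fibrations of order~$n$; and the comparison $c_{\psi\comp \phi}$ of the composite square factors, by the one-dimension-up pasting identity for comparison maps, as $c_{\psi\comp \phi}=\overline{c_\psi}\comp c_\phi$, where $\overline{c_\psi}$ is the pullback of $c_\psi$ along the $n$-fold extension given by the appropriate edge of the square~$\phi$. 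By Lemma~\ref{LemmaHigherExtensionsCalculus} such a pullback exists, is computed component-wise and is again an $n$-fold extension, while Remark~\ref{RemarkPullbacksWheneverNeeded} guarantees that we may form it whenever needed. Granting that $\overline{c_\psi}$ is still a discrete fibration of order~$n$, the inductive hypothesis makes $c_{\psi\comp \phi}=\overline{c_\psi}\comp c_\phi$ a discrete fibration of order~$n$, and a final application of Proposition~\ref{Prop DF as Pullback Up To a DF} shows that $\psi\comp \phi$ is a discrete fibration of order~$n+1$.

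The one point that requires genuine work---the only ingredient that is not a straight transcription of the proof of Corollary~\ref{Corollary DF compose}---is the auxiliary statement that a pullback of a discrete fibration of order~$n$ along an $n$-fold extension is again a discrete fibration of order~$n$. I would establish this by a parallel induction on~$n$: the base case is the elementary observation that pullback functors preserve pullback squares, and the inductive step once more uses Proposition~\ref{Prop DF as Pullback Up To a DF}---``being a limit cube'' is detected, one direction at a time, by the comparison to a pullback being a lower-order discrete fibration---together with the component-wise computation of pullbacks along higher extensions from Section~\ref{Section Calculus}. The residual bookkeeping (that the pasting identity is already valid at the level of higher extensions, and that the choice of transverse direction does not matter) is routine, since by Proposition~\ref{PropositionBarrKock} and the calculus of Section~\ref{Section Calculus} everything in sight is ultimately computed point-wise in the Barr exact category~$\C_0$.
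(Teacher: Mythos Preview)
Your argument is correct. The paper gives no proof of this corollary at all---it is stated without proof, the authors treating it as immediate from Proposition~\ref{Prop DF as Pullback Up To a DF} in the same way that Corollary~\ref{Corollary DF compose} follows from Proposition~\ref{Prop DF as Pullback Up To a Pullback}. Your induction unwinds that word ``immediate'' correctly. One small imprecision: the map along which $c_\psi$ is pulled back to obtain $\overline{c_\psi}$ is not literally an edge of the square~$\phi$ but rather the induced map between the two pullbacks (itself a pullback of the bottom edge $\phi_{\pperp}$); by Lemma~\ref{LemmaHigherExtensionsCalculus}(3) this is still an $n$-fold extension, so nothing breaks.

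It is worth noting that the auxiliary statement you isolate and prove---pullback-stability of discrete fibrations along higher extensions---is exactly the paper's Proposition~\ref{Proposition Pullback-Stability}, stated immediately after this corollary and likewise left without proof; so you have in effect filled in both gaps at once. A more direct alternative would avoid the induction by arguing straight from the limit-cube definition (compose the given cone with the components of $\phi$ to obtain a cone on the second cube, use the limit property of $\psi$ to get a map to the initial object of $\beta$, then complete to a cone on the first cube and invoke the limit property of $\phi$), but your route via Proposition~\ref{Prop DF as Pullback Up To a DF} is entirely natural given the paper's setup.
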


\begin{proposition}\label{Proposition Pullback-Stability}
	Viewed as $(n+1)$-fold extensions (in any direction), discrete fibrations are pullback-stable: any pullback of a discrete fibration along an $(n+1)$-fold extension is again a discrete fibration. \noproof
\end{proposition}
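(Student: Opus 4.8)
The plan is to proceed by induction on the order, the base case being that of discrete fibrations of order~$2$, i.e.\ pullback squares of extensions: there the statement reduces to the fact that a pullback square, pulled back along an extension, is again a pullback square, which follows from Lemma~\ref{LemmaHigherExtensionsCalculus} (the pullback is formed component-wise) together with the pasting law for pullbacks; since a pullback square reads the same way in either of its two presentations as an arrow between arrows, the independence of direction is automatic here.

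For the inductive step, assume the statement for discrete fibrations of order~$n$. Let $\phi$ be a discrete fibration of order~$n+1$, regarded in some direction as an $(n+1)$-fold extension $\phi\colon\tau\to\alpha$, and let $\psi\colon\beta\to\alpha$ be an $(n+1)$-fold extension. By Lemma~\ref{LemmaHigherExtensionsCalculus} the pullback $\psi^{*}(\phi)\colon\tau\times_{\alpha}\beta\to\beta$ exists in $\C_{n}$, is again an $(n+1)$-fold extension, and is computed component-wise; write $\alpha'$ for its underlying $(n+1)$-cubical extension. We must show that $\alpha'$ is a discrete fibration of order~$n+1$, and since this property does not depend on the chosen direction, proving it settles the statement ``in any direction'' at once. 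To do so I would invoke Proposition~\ref{Prop DF as Pullback Up To a DF}: present $\alpha'$ as a square of $n$-cubical extensions and check that the induced comparison $n$-cube to the corresponding dotted pullback is a discrete fibration of order~$n$. Because pullbacks along $(n+1)$-fold extensions are computed component-wise, this square-of-$n$-cubes presentation of $\alpha'$, together with its dotted pullback, is obtained from the analogous data for $\phi$ by the (component-wise) pullback along the data extracted from $\psi$; by the pasting law for pullbacks the comparison $n$-cube of $\alpha'$ is therefore a pullback of the comparison $n$-cube of $\phi$ along an $n$-fold extension. Now Proposition~\ref{Prop DF as Pullback Up To a DF} applied to $\phi$, which is a discrete fibration, says the comparison $n$-cube of $\phi$ is a discrete fibration of order~$n$; the induction hypothesis then says its pullback, i.e.\ the comparison $n$-cube of $\alpha'$, is again a discrete fibration of order~$n$; and Proposition~\ref{Prop DF as Pullback Up To a DF}, read in the other direction, yields that $\alpha'$ is a discrete fibration of order~$n+1$.

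I expect the only point needing genuine care, the rest being bookkeeping with directions and indices, to be the identification of the comparison $n$-cube of $\alpha'$ with a pullback of the comparison $n$-cube of $\phi$: one must check that ``forming the comparison to the pullback in a square-of-$n$-cubes presentation'' commutes with ``taking a component-wise pullback along an $(n+1)$-fold extension'', and that the morphism along which one ends up pulling back is itself an $n$-fold extension, so that the inductive hypothesis applies. Both reduce to repeated use of the pasting law for pullbacks together with Lemma~\ref{LemmaHigherExtensionsCalculus}; alternatively one can bypass the bookkeeping by noting that every pullback in sight is computed pointwise in $\C_{0}$, so that the whole situation is controlled by a finite diagram of pullback squares in $\C_{0}$ and the comparison in question is just a pasting of such.
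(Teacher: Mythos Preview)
Your argument is correct. The paper itself gives no proof here (the proposition carries a \verb|\noproof| marker and is left as evident), so there is nothing to compare against directly; your induction via Proposition~\ref{Prop DF as Pullback Up To a DF} is a sound and natural way to supply the details. The crux you single out---that the comparison $n$-cube of the pulled-back $(n+1)$-cube is the pullback, along an $n$-fold extension, of the comparison $n$-cube of the original---does follow from Lemma~\ref{LemmaHigherExtensionsCalculus} together with iterated use of the pasting law, and your remark that everything can alternatively be read off pointwise in~$\C_{0}$ is also valid. A more compressed justification, and presumably what the authors have in mind, is simply that a discrete fibration is by definition a limit cube, component-wise pullback is itself a limit, and finite limits commute; your inductive unpacking makes this precise.
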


Theorem~\ref{Theorem Covering iff Symmetric Covering} can be strengthened as follows. Recall that the theorem characterises a $\Gamma_{n-1}$-covering $c$ as an $n$-fold extension for which there exists a span of discrete fibrations $b\ot t\to c$ of order $n+1$ where $b$ is a primitive $\Gamma_{n-1}$-covering: it is an $n$-fold extension of which the domain $b_{\ttop}$ and codomain $b_{\pperp}$ are in $\B_{n-1}$. We now add to this, that any $(n+1)$-fold extension $\alpha\colon c\to c'$ between such $\Gamma_{n-1}$-coverings induces a morphism of spans as in Diagram~\eqref{Span of extensions} below, which happens to be a span ${\beta\ot \tau\to \alpha}$ of discrete fibrations of order $n+1$.

Note that $\alpha$ is an extension in $\B_{n+1}$, so that it is automatically a $\Gamma_{n}$-covering. Hence, Theorem~\ref{Theorem Covering iff Symmetric Covering} already yields a span of discrete fibrations ${\beta\ot \tau\to \alpha}$; however, here $\beta$ is merely again in $\B_{n+1}$. (We could actually take identity discrete fibrations.) In Lemma~\ref{Covering iff Symmetric Covering is Functorial}, the obtained span is of a different nature: it yields an $(n+1)$-fold extension $\beta$ between two primitive $\Gamma_{n-1}$-coverings $b$ and $b'$.

\begin{lemma}\label{Covering iff Symmetric Covering is Functorial}
	If $c$ and $c'$ are $\Gamma_{n-1}$-coverings (objects of $\B_n$) and $\alpha\colon c\to c'$ is an $(n+1)$-fold extension between those, then
	\begin{itemize}
		\item 	an $(n+1)$-fold extension between trivial $\Gamma_{n-1}$-coverings $\tau\colon t\to t'$, and
		\item  an $(n+1)$-fold extension $\beta\colon b\to b'$ between primitive $\Gamma_{n-1}$-coverings $b$ and $b'$, which are $n$-fold extensions whose domains ($b_{\ttop}$ and $b'_{\ttop}$) and codomains ($b_{\pperp}$ and $b'_{\pperp}$) are objects of $\B_{n-1}$
	\end{itemize}
	exist, together with (horizontal) discrete fibrations
	\begin{equation}\label{Span of extensions}
		\vcenter{\xymatrix{b \ar@{<-}[r] \ar[d]_-\beta & t \ar[r] \ar[d]^-\tau & c \ar[d]^-\alpha \\
				b' \ar@{<-}[r] & t' \ar[r] & c'}}
	\end{equation}
	of order $n+1$ between them.
\end{lemma}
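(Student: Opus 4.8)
The plan is to run the construction from the proof of Theorem~\ref{Theorem Covering iff Symmetric Covering}, but with the arrow $\alpha$ carried along throughout rather than starting from a single covering; what makes this possible is that $\alpha$ is itself a covering one order up. Indeed, since $c$ and $c'$ are objects of the reflective subcategory $\B_n$ and $\alpha$ is an extension in $\C_n$ between them, $\alpha$ lies in $\B_n$, so it is a primitive $\Gamma_n$-covering, in particular a $\Gamma_n$-covering. Viewed as a double extension in $\C_{n-1}$, $\alpha$ is therefore a double covering in the sense of Section~\ref{Section Double Coverings}, taken over the base structure $\Gamma_{n-1}$. First I would apply Theorem~\ref{Theorem Covering iff Symmetric Covering} directly to $\alpha$, cut as the arrow $c \to c'$: it yields a symmetrically trivial double covering $\tau$ together with discrete fibrations $\tau \to \alpha$ and $\tau \to \beta$, where $\beta$ lies in $\B_{n-1}$---that is, a span $\beta \ot \tau \to \alpha$ of discrete fibrations of order three in $\C_{n-1}$.

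Next I would unfold this span. Reading $\beta$ and $\tau$ as arrows in $\C_n$ in the direction in which $\alpha$ is the arrow $c \to c'$, we get $(n+1)$-fold extensions $\beta \colon b \to b'$ and $\tau \colon t \to t'$, and the two discrete fibrations $\tau \to \beta$ and $\tau \to \alpha$ together furnish precisely the morphism of spans displayed in~\eqref{Span of extensions}. Since $\beta$ lies in $\B_{n-1}$---all four of its vertices in $\C_{n-1}$ being objects of $\B_{n-1}$---the $n$-fold extensions $b$ and $b'$ have both domain and codomain in $\B_{n-1}$, so they are primitive $\Gamma_{n-1}$-coverings. That $t$ and $t'$ are trivial $\Gamma_{n-1}$-coverings I would read off from the proof of Lemma~\ref{Lemma Trivial}, which exhibits a symmetrically trivial double covering as glued from pullbacks of extensions lying in $\B_{n-1}$; component-wise, this says exactly that the edges of $\tau$ are pullbacks of primitive $\Gamma_{n-1}$-coverings. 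Commutativity of~\eqref{Span of extensions} is inherited from the construction, each square in it being built by a universal property; one checks it, as in the proofs of Lemma~\ref{LemmaTrivDoublePushoutsReduceToPushouts} and Theorem~\ref{Theorem Covering iff Symmetric Covering}, by precomposing the relevant $\C_0$-cubical diagrams with the appropriate extensions, which are regular epimorphisms by Proposition~\ref{PropositionBarrKock}.

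The remaining---and genuinely delicate---point is the \emph{order} of the discrete fibrations: Theorem~\ref{Theorem Covering iff Symmetric Covering} produces limit cubes over $\C_{n-1}$, whereas~\eqref{Span of extensions} asks for discrete fibrations of order $n+1$ over $\C_0$. I would settle this by an induction on $n$ for which the double case handled by Theorem~\ref{Theorem Covering iff Symmetric Covering} and Lemma~\ref{Lemma Trivial} is the base step, and whose inductive step uses Proposition~\ref{Prop DF as Pullback Up To a DF} to test the discrete-fibration property one dimension at a time. Concretely, every discrete fibration arising in the construction is glued from pullbacks in $\C_k$ of higher extensions (for various $k$); so by Example~\ref{Example Higher DF pullback}, composition of discrete fibrations, Proposition~\ref{Proposition Pullback-Stability} and the component-wise computation of pullbacks along extensions (Lemma~\ref{LemmaHigherExtensionsCalculus}), the induced comparison maps stay pullbacks as one descends in dimension, which identifies the horizontal arrows of~\eqref{Span of extensions} as genuine discrete fibrations of order $n+1$ over $\C_0$. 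I expect this reconciliation between ``discrete fibration of order three in $\C_{n-1}$'' and ``discrete fibration of order $n+1$ over $\C_0$'' to be the principal obstacle---a limit cube at an intermediate level is only a family of partial limit conditions over $\C_0$, so the passage is not formal and leans on the pullback shape of everything in sight together with the inductive hypothesis. A secondary, routine, matter is the bookkeeping of which faces of the three-cubes play the roles of $b$, $t$, $c$ and of $b'$, $t'$, $c'$, so that the output really is a morphism of spans.
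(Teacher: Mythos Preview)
Your approach is essentially the paper's, and the paper's proof is almost as brief as your opening sentence: it says that ``all of the constructions in the first part of the proof of Theorem~\ref{Theorem Covering iff Symmetric Covering} may be extended to ($(n+1)$-cubic) extensions between coverings by taking appropriate pullbacks when needed''. So the intended argument really is to carry $\alpha$ through the construction of Theorem~\ref{Theorem Covering iff Symmetric Covering}, exactly as you propose.

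There is a small difference in emphasis worth flagging. You package the argument by invoking Theorem~\ref{Theorem Covering iff Symmetric Covering} \emph{as a black box} at level $\Gamma_{n-1}$, using that $\alpha$ is itself a (primitive, hence) double covering there. This is elegant, but it produces a span of discrete fibrations of order three \emph{in $\C_{n-1}$}, and then---as you correctly spot---one must argue that the horizontal faces $t\to c$, $t'\to c'$, etc.\ are discrete fibrations of order $n+1$ over~$\C_0$. A limit $3$-cube in $\C_{n-1}$ is not literally the same datum as a limit $(n{+}1)$-cube in~$\C_0$, and in a generic pullback cube the ``side'' faces need not be pullbacks. The paper's phrasing avoids manufacturing this mismatch: rather than appealing to the theorem one level up, it re-runs the individual \emph{steps} of the construction (each a pullback along an extension) on the arrow~$\alpha$; since every step is a pullback, and pullbacks along extensions are computed componentwise (Lemma~\ref{LemmaHigherExtensionsCalculus}) and preserve discrete fibrations (Proposition~\ref{Proposition Pullback-Stability}), the horizontal morphisms are discrete fibrations of the required order by construction. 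Your inductive reconciliation via Proposition~\ref{Prop DF as Pullback Up To a DF} would achieve the same end, but the ``re-run the steps'' reading makes the bookkeeping lighter: one never has to compare limit notions across levels, because each face one cares about is explicitly built as a pullback.
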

\begin{proof}
	All of the constructions in the first part of the proof of Theorem~\ref{Theorem Covering iff Symmetric Covering} may be extended to ($(n+1)$-cubic) extensions between coverings by taking appropriate pullbacks when needed.
\end{proof}

\begin{remark}\label{Remark Strategy}
	Shifting the point of view in Lemma~\ref{Covering iff Symmetric Covering is Functorial} from $\alpha$ being a morphism to $\alpha$ being an object, say of dimension $n$, the statement becomes: Considering an object $\alpha$ which is a primitive $\Gamma_{n-1}$-covering, a span of discrete fibrations ${\beta\ot \tau\to \alpha}$ of order $n+1$ exists where~$\beta$ is an extension between two $\Gamma_{n-2}$-coverings. We thus manage to connect, via a span of discrete fibrations, any primitive covering to an extension between primitive coverings of a lower order. Knowing that any covering is connected, via a span of discrete fibrations, to a primitive covering, by induction we can connect any $n$-fold covering, via a zigzag of discrete fibrations---see the proof of Theorem~\ref{Main Theorem}---to an $n$-cubical diagram in the base category $\B_0$. We are now going to use these ideas to characterise coverings in the absolute, symmetrical way we are pursuing.
\end{remark}

\begin{definition}
	An $n$-cubical extension $\tau$ in $\C_0$ is called an ($n$-fold) \defn{symmetrically trivial covering} when a discrete fibration $\tau\to \beta$ exists whose codomain $\beta$ is an $n$-fold extension in $\B_0$.
\end{definition}

\begin{lemma}\label{Lemma Trivial Higher}
	An $n$-fold symmetrically trivial covering is a trivial $\Gamma_{n-1}$-covering, in whichever way this $n$-cube is considered as an arrow between $(n-1)$-cubical extensions.
\end{lemma}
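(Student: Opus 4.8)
The plan is to proceed by induction on $n$, the base case being $n=2$. When $n=2$ the statement is exactly Lemma~\ref{Lemma Trivial} read at the base level $\Gamma=\Gamma_0$: a $2$-fold extension in $\B_0$ is what was there called a primitive $\Gamma_0$-covering, since all of its vertices---hence all of its edges---lie in $\B_0$, and a trivial $\Gamma_1$-covering is a trivial double covering with respect to $\Gamma_0$.

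For the induction step, assume the statement for all cube dimensions below $n$, and let $\tau$ be an $n$-cubical extension carrying a discrete fibration $\varphi\colon\tau\to\beta$ of order $n+1$ whose codomain $\beta$ is an $n$-fold extension in $\B_0$. Fix a direction and regard $\tau$ as an arrow $\tau\colon c\to c'$ between $(n-1)$-cubical extensions; in the same direction $\beta$ becomes an arrow $b\to b'$ between $(n-1)$-fold extensions in $\B_0$. First I would view the $(n+1)$-cube $\varphi$ as a square of $n$-cubical extensions whose two square-directions are the chosen direction together with the ``$\varphi$-direction'', so that $\tau$ appears as one of the four $n$-cube edges of this square. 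Proposition~\ref{Prop DF as Pullback Up To a DF} then provides the induced comparison from $\tau$ towards the pullback of the two opposite sides, and---exactly as in the proof of Lemma~\ref{Lemma Trivial}, which at that dimension invokes Proposition~\ref{Prop DF as Pullback Up To a Pullback} and finds the comparison to be an honest pullback square---this comparison, read in the chosen direction, exhibits $\tau$ as a pullback of an $n$-cubical extension $M$ whose faces are pullbacks of the corresponding faces of $\beta$. The faces of $\beta$ are $(n-1)$-fold extensions in $\B_0$, so by the induction hypothesis (applied to the identity discrete fibration on each of them) they are trivial $\Gamma_{n-2}$-coverings, and hence so are their pullbacks by pullback-stability of trivial coverings along extensions; in particular the domain and codomain of $M$ lie in $\B_{n-1}$, so that $M$ is a primitive $\Gamma_{n-1}$-covering. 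Being a pullback of $M$, the $n$-cube $\tau$ is therefore a trivial $\Gamma_{n-1}$-covering. Since the direction was arbitrary, $\tau$ is a trivial $\Gamma_{n-1}$-covering whichever way it is considered as an arrow between $(n-1)$-cubical extensions, which is the claim.

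The delicate point, as in the double-covering case, is the combinatorics of cube directions and faces: one must check that the comparison supplied by Proposition~\ref{Prop DF as Pullback Up To a DF}---a priori only a discrete fibration of order $n$---genuinely presents $\tau$ as a \emph{pullback} of $M$ in the chosen direction. This is where the inductive structure of that proposition is used, together with the fact that the remaining sides of the square are themselves pullbacks of faces of $\beta$; concretely it reduces to the step already carried out one dimension lower. I expect this compatibility verification, together with the identification ``an $(n-1)$-cubical extension in $\B_0$ is a trivial $\Gamma_{n-2}$-covering'' delivered by the induction hypothesis, to be the only genuine work; the rest is a matter of transporting the diagrams of Lemma~\ref{Lemma Trivial} up by one dimension.
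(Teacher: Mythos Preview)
Your proposal is correct and follows essentially the same route as the paper's own proof. Both arguments proceed by induction on the cube dimension with Lemma~\ref{Lemma Trivial} (at the base level) as the starting point; both run the decomposition of that lemma one level higher, replacing pullbacks by discrete fibrations of the appropriate order via Proposition~\ref{Prop DF as Pullback Up To a DF}; and both use the induction hypothesis to see that the analogues of $s$ and $t$---your domain and codomain of $M$---are $\Gamma_{n-2}$-coverings, so that $M$ is a primitive $\Gamma_{n-1}$-covering whose pullback is $\tau$. Your closing paragraph correctly isolates the only genuine verification, namely that the order-$n$ discrete fibration produced by Proposition~\ref{Prop DF as Pullback Up To a DF} really does present $\tau$ as a pullback of $M$ in the chosen direction; this is exactly what the paper compresses into the phrase ``replace any pullback in the diagram there with a discrete fibration of the appropriate order''.
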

\begin{proof}
	This adds an element of induction to the proof of Lemma~\ref{Lemma Trivial}. Replace any pullback in the diagram there with a discrete fibration of the appropriate order, while $s$ and $t$ are $\Gamma_{n-2}$-coverings. This forms the induction step of a proof by downwards induction on $n$, where the base step is Lemma~\ref{Lemma Trivial}, applied to the reflection of $\C_{n-2}$ to $\B_{n-2}$.
\end{proof}

\begin{definition}
	An $n$-cubical extension $\alpha$ is called an ($n$-fold) \defn{symmetric covering} when a discrete fibration $\tau\to \alpha$ exists whose domain $\tau$ is a symmetrically trivial covering.
\end{definition}

\begin{theorem}\label{Main Theorem}
	For each $n\geq 2$, an $n$-fold extension $\alpha$ is a $\Gamma_{n-1}$-covering if and only if it is a symmetric covering. More precisely, $\alpha$ is a $\Gamma_{n-1}$-covering exactly when there exists a span of discrete fibrations ${\beta\ot \tau\to \alpha}$ of order $n+1$ where $\beta$ is an $n$-cubical extension in $\B_0$. Hence, the concept of a higher covering is symmetric: it does not depend on the way the $n$-cube is considered as an arrow between $(n-1)$-cubes.
\end{theorem}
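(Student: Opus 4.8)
The plan is to establish both implications by induction on $n\geq 2$. The case $n=2$ is a direct rereading of Theorem~\ref{Theorem Covering iff Symmetric Covering} at the base level $\Gamma_0$: a $2$-fold symmetrically trivial covering is exactly a symmetrically trivial double covering with respect to $\Gamma_0$, a $2$-fold symmetric covering is a symmetric double covering with respect to $\Gamma_0$, and a $\Gamma_1$-covering is, by definition, a double covering with respect to $\Gamma_0$. As for the ``more precisely'' clause, it is not a separate statement: unfolding the definitions of a symmetric covering and of a symmetrically trivial covering and then composing (or decomposing) the two discrete fibrations that appear---using the composability of discrete fibrations, which follows from Proposition~\ref{Prop DF as Pullback Up To a DF}---shows that ``$\alpha$ is a symmetric covering'' is equivalent to ``there is a span of discrete fibrations $\beta\ot\tau\to\alpha$ of order $n+1$ with $\beta$ an $n$-cubical extension in $\B_0$''.

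For the implication $(\Leftarrow)$ at a level $n\geq 3$, suppose such a span $\beta\ot\tau\to\alpha$ is given with $\beta$ an $n$-cubical extension in $\B_0$. By Lemma~\ref{Lemma Trivial Higher}, $\tau$ is then a trivial $\Gamma_{n-1}$-covering, in whichever direction it is read. Now regard the $n$-cube $\alpha$ as a double extension in $\C_{n-2}$. The hypothesis on $\beta$ is stronger than ``$\beta$ lies in $\B_{n-2}$'', because the four objects of $\C_{n-2}$ underlying $\beta$ are $(n-2)$-cubical extensions in $\B_0$, hence $\Gamma_{n-3}$-coverings by the inductive hypothesis (for $n=3$ these are primitive $\Gamma_0$-coverings, so the claim is immediate). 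Consequently, the span $\beta\ot\tau\to\alpha$, read at the level $\Gamma_{n-2}$, exhibits $\alpha$ as a symmetric double covering with respect to $\Gamma_{n-2}$, and Theorem~\ref{Theorem Covering iff Symmetric Covering} at that level gives that $\alpha$ is a double covering with respect to $\Gamma_{n-2}$, i.e.\ a $\Gamma_{n-1}$-covering.

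The implication $(\Rightarrow)$ carries the weight of the proof. Let $\alpha$ be an $n$-fold extension, $n\geq 3$, which is a $\Gamma_{n-1}$-covering, regarded as a double extension in $\C_{n-2}$. Theorem~\ref{Theorem Covering iff Symmetric Covering} at level $\Gamma_{n-2}$ produces a first span of discrete fibrations $\beta^{(1)}\ot\tau^{(1)}\to\alpha$ of order $n+1$ in which $\beta^{(1)}$ lies in $\B_{n-2}$; equivalently, the four objects of $\C_{n-2}$ underlying $\beta^{(1)}$ are $\Gamma_{n-3}$-coverings. One now improves $\beta^{(1)}$: its faces are coverings, its edges are morphisms between coverings, its square is a morphism between such morphisms, so by applying the functorial statement Lemma~\ref{Covering iff Symmetric Covering is Functorial} successively (at the dimensions $n-2,n-3,\dots$, and at each ``arrow layer'' of the cube) together with the inductive hypothesis on the lower-dimensional faces (for dimension one, Example~\ref{Example Pullback as Disc Fib}), one obtains, through further spans of discrete fibrations, cubes $\beta^{(2)},\beta^{(3)},\dots$ in which the relevant faces become primitive coverings sitting lower and lower in the tower, until one reaches an $n$-cubical extension $\beta$ all of whose vertices lie in $\B_0$. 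This produces a zigzag $\alpha\ot\tau^{(1)}\to\beta^{(1)}\ot\tau^{(2)}\to\cdots\to\beta$ of spans of discrete fibrations. Each consecutive pair determines a cospan $\tau^{(k)}\to\beta^{(k)}\ot\tau^{(k+1)}$, which we resolve by the pullback $\tau^{(k)}\times_{\beta^{(k)}}\tau^{(k+1)}$: since any pullback of a discrete fibration along an $(n+1)$-fold extension is again a discrete fibration (Proposition~\ref{Proposition Pullback-Stability}) and discrete fibrations compose (Proposition~\ref{Prop DF as Pullback Up To a DF}), the zigzag collapses to a single span $\beta\ot\tau\to\alpha$ with $\beta$ an $n$-cubical extension in $\B_0$. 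By the reformulation of the first paragraph, this says precisely that $\alpha$ is a symmetric covering. The symmetry of the notion of higher covering then follows at once: no step of the construction privileged a particular way of viewing the $n$-cube as an arrow between $(n-1)$-cubes.

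I expect the genuine obstacle to lie in the improvement step of $(\Rightarrow)$. Lemma~\ref{Covering iff Symmetric Covering is Functorial} upgrades only one ``arrow layer'' of a cube at a time---turning ``the domain and codomain are coverings'' into ``the domain and codomain are \emph{primitive} coverings''---so deducing ``every vertex, and every vertex of every vertex, \dots, lies in $\B_0$'' requires a carefully organised nested induction that tracks, simultaneously in all $n$ coordinate directions, how far down the tower each face has already been pushed; and one must check at each stage that the comparison squares arising are indeed discrete fibrations (via Proposition~\ref{Prop DF as Pullback Up To a DF}), so that the successive spans do compose.
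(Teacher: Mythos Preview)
Your overall strategy matches the paper's: for $(\Rightarrow)$ you iterate Lemma~\ref{Covering iff Symmetric Covering is Functorial} and collapse the zigzag via pullbacks (Proposition~\ref{Proposition Pullback-Stability}) and composition; for $(\Leftarrow)$ you invoke Lemma~\ref{Lemma Trivial Higher} and then argue as in the converse of Theorem~\ref{Theorem Covering iff Symmetric Covering}. Your worry in the final paragraph is over-complication: the paper's iteration is \emph{linear}, not a nested induction. One pass through (the reformulation in Remark~\ref{Remark Strategy} of) Lemma~\ref{Covering iff Symmetric Covering is Functorial} takes a $\beta_k$ whose domain and codomain are $\Gamma_{k-2}$-coverings to a $\beta_{k-1}$ whose domain and codomain are \emph{primitive} $\Gamma_{k-2}$-coverings; after $n-1$ further steps all $2^n$ vertices are in $\B_0$. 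The horizontal arrows produced at each stage are $(n+1)$-cubical extensions between order-$n$ discrete fibrations, hence themselves order-$(n+1)$ discrete fibrations, so the pullback-and-compose manoeuvre goes through without extra bookkeeping.

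Where there is a genuine gap is in your $(\Leftarrow)$. You attempt to invoke Theorem~\ref{Theorem Covering iff Symmetric Covering} at the level $\Gamma_{n-2}$ as a black box, saying the given span ``exhibits $\alpha$ as a symmetric double covering with respect to $\Gamma_{n-2}$''. But that statement requires the arrows of the span to be discrete fibrations of order~$3$ \emph{in $\C_{n-2}$}---equivalently, each of the $2^{n-2}$ component $3$-cubes in $\C_0$ must be a limit cube---whereas what you are handed is a discrete fibration of order $n+1$ \emph{in $\C_0$}, a single limit condition on the global initial vertex. These are not the same notion, and the latter does not imply the former: the component $3$-cubes away from the initial vertex need not be limit cubes. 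The paper therefore does \emph{not} quote Theorem~\ref{Theorem Covering iff Symmetric Covering}, but reruns its converse argument directly, replacing Lemma~\ref{Lemma Trivial} by Lemma~\ref{Lemma Trivial Higher} and Proposition~\ref{Prop DF as Pullback Up To a Pullback} by Proposition~\ref{Prop DF as Pullback Up To a DF}: one decomposes the order-$(n+1)$ discrete fibration $\tau\to\alpha$, obtains an intermediate $n$-cube which is a quotient of the covering $\tau$ (hence a covering by Proposition~\ref{CentralExtensionsAreStableAlongDoubleExtensions}), and then reflects the covering property down to $\alpha$ along a pullback. Your argument becomes correct once you make this replacement.
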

\begin{proof}
	Let $\alpha$ be a $\Gamma_{n-1}$-covering. As in Remark~\ref{Remark Strategy}, we apply Lemma~\ref{Covering iff Symmetric Covering is Functorial} repeatedly, in order to obtain a span of discrete fibrations ${\beta\ot \tau\to \alpha}$ where $\beta$ is an $n$-cubical extension in $\B_0$. We first obtain a span $\beta_n\ot \tau_n\to \alpha_n=\alpha$ where $\beta_n$ is a primitive $\Gamma_{n-1}$-covering (Theorem~\ref{Theorem Covering iff Symmetric Covering}). Next, Lemma~\ref{Covering iff Symmetric Covering is Functorial} gives us a span
	\[
		\beta_{n-1}\ot \tau'_{n-1}\to \alpha_{n-1}=\beta_n
	\]
	where $\beta_{n-1}$ is an $n$-fold extension between two primitive $\Gamma_{n-2}$-coverings. We pull back
	\[
		\xymatrix@!0@=4em{&& \tau_{n-1} \rotpullbackdots \ar@{.>}[dl] \ar@{.>}[dr]\\
		& \tau'_{n-1} \ar[dl] \ar[dr] && \tau_n \ar[dl] \ar[dr] \\
		\beta_{n-1} && \alpha_{n-1}=\beta_n && \alpha_n=\alpha}
	\]
	and compose to find the span $\beta_{n-1}\ot \tau_{n-1}\to \alpha$: note that here we use Proposition~\ref{Proposition Pullback-Stability}. We repeat this process until we find $\beta=\beta_1$, an $n$-cubical extension in~$\B_0$, together with an $n$-cubical extension $\tau=\tau_1$ and appropriate discrete fibrations ${\beta\ot \tau\to \alpha}$.

	The converse is an obvious variation on the second part of the proof of Theorem~\ref{Theorem Covering iff Symmetric Covering}, with Lemma~\ref{Lemma Trivial} replaced by Lemma~\ref{Lemma Trivial Higher}.
\end{proof}

\section*{Acknowledgement}
The first author wishes to thank Marino Gran, Pedro Vaz and Enrico Vitale for their kind support during the writing of this article (November 2021--June 2022).



\end{document}